\newtheorem*{theorem*}{Theorem}
\newtheorem{teo}{Theorem}[section]
\theoremstyle{definition}
\newtheorem{prop}[teo]{Proposition}
\newtheorem{lem}[teo]{Lemma}
\newtheorem{coro}[teo]{Corollary}
\newtheorem{defi}[teo]{Definition}
\newtheorem{rem}[teo]{Remark}
\newtheorem{ejem}[teo]{Example}
\newtheorem{problem}[teo]{Problem}
\DeclareMathOperator\ad{ad}
\DeclareMathOperator\Ad{Ad}
\DeclareMathOperator\re{Re}
\DeclareMathOperator\LU{Length}
\DeclareMathOperator\di{dist}
\DeclareMathOperator\tr{Tr}
\DeclareMathOperator\su{\mathfrak{su}_n(\mathbb C)}
\DeclareMathOperator\sop{\mathrm{supp}}
\DeclareMathOperator\g{\mathfrak{g}}
\DeclareMathOperator\s{\mathfrak{k}}
\DeclareMathOperator\ah{\mathfrak{h}}
\DeclareMathOperator\rad{\mathrm{Rad}(\g)}
\DeclareMathOperator\B{\mathfrak{B}}
\newcommand{\todoi}[1]
  {\vspace{5 mm}\par \noindent \marginpar{\begin{center}\LARGE\hspace*{-4cm}I\end{center}} \framebox{\begin
  {minipage}[c]{0.8 \textwidth} \tt\color[rgb]{0,0.5,0} #1
\end{minipage}}\vspace{5 mm}\par}
\newcommand{\todog}[1]
  {\vspace{5 mm}\par \noindent \marginpar{\begin{center}\LARGE\hspace*{-4cm}G\end{center}} \framebox{\begin
  {minipage}[c]{0.8 \textwidth} \tt\color[rgb]{0,0,0.5} #1
\end{minipage}}\vspace{5 mm}\par}
\begin{document}

\title{\vspace*{-2cm}Lie groups with a bi-invariant distance\footnote{2020 MSC Primary 22E65, 58B20; Secondary 53C22}.}
\date{}
\author{Gabriel Larotonda and Ivan Rey\footnote{Supported by CONICET and ANPCyT, Argentina},\footnote{Instituto Argentino de Matem\'atica (CONICET) and Facultad de Ciencias Exactas y Naturales, Universidad de Buenos Aires. e-mails: glaroton@dm.uba.ar, ivanrey1988@gmail.com}}

\maketitle

\abstract{\footnotesize{\noindent We prove that a Lie group $G$ admitting a bi-invariant distance is necessarily of the form $G=H\times K$, where $H$ is a connected abelian Lie group and $K$ is a connected compact Lie group with discrete centre. Moreover, the distance arises from a unique $\Ad$-invariant Finsler norm on the Lie algebra $\mathfrak g=Lie(G)$ as the infimum of lengths of rectifiable paths. The distance-minimising paths are left or right translates of one-parameter subgroups (though not uniquely so when the norm fails to be smooth or strictly convex). Following Milnor, we define a notion of sectional curvature $\sec(\pi)$ for any $2$-plane $\pi\subset\g$; this curvature is non-negative and vanishes precisely on abelian planes. When the norm is strictly convex, $\sec(\pi)=0$ if and only if $\pi$ is abelian. We obtain finer characterisations of vanishing curvature in the non-strictly-convex case.\footnote{{\bf Keywords and phrases:} bi-invariant metric, curvature, Finsler metric, geodesic, Lie group, one-parameter group, skew-Hermitian matrix, unitary group, unitarily invariant norm}}}

\setlength{\parindent}{0cm} 

\tableofcontents
\section{Introduction}

The study of Lie groups equipped with invariant metrics has a rich history, dating back to foundational works in Riemannian geometry and continuing through extensions to more general metric structures. In the Riemannian setting, Milnor's seminal 1976 paper~\cite{milcur} characterized connected Lie groups admitting bi-invariant Riemannian metrics as direct products of compact semisimple Lie groups and abelian vector groups, with non-negative sectional curvature and geodesics realized as one-parameter subgroups. This result built on earlier observations that compact Lie groups always admit bi-invariant metrics (via averaging), while non-compact ones do so only under restrictive conditions. Milnor's curvature formulas, involving the Lie bracket, highlighted the interplay between algebraic structure and geometric properties, such as the covariant derivative being half the adjoint representation.

Subsequent research extended these ideas beyond the Riemannian case. For instance, semi-Riemannian (including Lorentzian) metrics on Lie groups were explored, generalizing Milnor's curvature computations to indefinite signatures, with applications to relativity and pseudo-Riemannian symmetric spaces (see for instance the book by O'Neill \cite{onliegroupssemi}). In the context of non-negative curvature, Tapp~\cite{tapp04} classified left-invariant metrics on low-dimensional compact Lie groups like SO(3) and U(2), showing that many arise via Cheeger's deformation technique---shrinking along subgroup chains---while others, such as certain "twisted" metrics on U(2), do not. These classifications reveal a broader landscape of metrics with non-negative sectional curvature, often strictly containing the bi-invariant ones.

More recently, attention has turned to (smooth) Finsler metrics, which generalize Riemannian structures by allowing non-quadratic norms on tangent spaces. Latifi and Toomanian  \cite{lafiti} studied bi-invariant Finsler metrics on Lie groups, proving that absolutely homogeneous ones render the group a symmetric Finsler space of Berwald type, with geodesics coinciding with those of an underlying bi-invariant Riemannian metric. They derived an explicit flag curvature formula, extending Milnor's sectional curvature to the Finsler setting, and showed that such metrics exist on non-simple or higher-rank compact Lie groups but reduce to Riemannian ones on simple rank-one groups. This aligns with Berestovskii's results~\cite{beres} on homogeneous manifolds with intrinsic metrics, which imply that bi-invariant distances on locally contractible topological groups arise from Ad-invariant Finsler norms on their Lie algebras.

In our prior work~\cite{larey} we established properties of Ad-invariant Finsler norms on skew-Hermitian matrices, including geometry of spheres and subdifferentials. Hofer's metric, a Finsler-type distance arising from symplectic geometry, has been studied on compact Lie groups~\cite{larmi}, revealing minimal geodesics and distance bounds that turned out to be helpful insights in this systematization project.

This paper builds on these foundations, studying the geometry of Lie groups with bi-invariant distances in the general Finsler context, without assuming smoothness or full homogeneity of the norm. We consider an Ad-invariant Finsler norm on the Lie algebra \(\g = Lie(G)\), a subadditive, positively homogeneous map \(|\cdot| : \g \to [0, \infty)\) that is non-degenerate (\(|x| = 0 \Leftrightarrow x = 0\)). Such a norm induces the vector-space topology on \(\g\) and, via left/right translation, a bi-invariant (possibly non-reversible) distance on \(G\) given by the infimum of lengths of rectifiable paths. Conversely, any bi-invariant distance on a connected locally compact locally contractible topological group arises in this way from a unique Ad-invariant Finsler norm (Theorem~A below).

Our aims are twofold: to characterize Lie groups admitting bi-invariant distances and to develop a notion of curvature for such geometries, particularly characterizing flat 2-planes. We adopt the metric geometry viewpoint, where curvature measures distance distortion relative to model spaces. Adapting Milnor's ideas~\cite[p.~101]{milnorm}, we define curvature along pairs of directions in \(\g\) (see also~\cite{cocoeste} and \cite{larro} for a hyperbolic geometry analogue). In terms of complexifications, if \(G\) is a compact complex Lie group with Lie algebra \(\g\) and \(\mathfrak{k}\) its real compact form (\(\g = \mathfrak{k} \oplus i\mathfrak{k}\)), our work examines the geometry of \(G_\mathbb{R} \simeq \mathfrak{k}\) with a (non-Riemannian) bi-invariant distance, complementing studies of homogeneous spaces like \(G / G_\mathbb{R} \simeq i\mathfrak{k}\).

\medskip

What follows is an overview of the organization and key results. Motivated by Milnor's Riemannian characterization, we address whether a similar decomposition holds for general bi-invariant distances. Affirmatively, the main result of Section~\ref{groups} is:

\begin{theorem*}[A] If \((G, d)\) is a connected, locally compact, locally contractible topological group with a bi-invariant intrinsic distance, then \(G = H \times K\) is the product of a connected abelian Lie group \(H\) and a connected compact Lie group \(K\) with discrete center. In particular, \(Lie(G) = Z(\g) \oplus \mathfrak{k}\) where \(\mathfrak{k}\) is a compact Lie algebra.
\end{theorem*}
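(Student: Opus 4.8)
The plan is to descend to the Lie algebra, produce there an $\Ad$-invariant inner product, read off the algebraic splitting, and then lift it back to the group. I would first invoke the two facts that make this possible: a connected, locally compact, locally contractible topological group has no small subgroups, so by the solution of Hilbert's fifth problem it is a Lie group; and by Berestovskii \cite{beres} a bi-invariant intrinsic distance $d$ on it is induced by a unique $\Ad$-invariant Finsler norm $|\cdot|$ on $\g=\lie(G)$. These are the remaining assertions of Theorem~A, which I treat here as available.

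The decisive step is that $\Ad$-invariance places $\Ad(G)$ inside $\mathrm{Isom}(\g,|\cdot|)$, the group of linear isometries of a finite-dimensional normed space; this group is \emph{compact}, being closed in $\gl(\g)$ and bounded (each isometry preserves the unit ball, a bounded neighbourhood of $0$). Hence $\overline{\Ad(G)}$ is compact, and averaging an arbitrary inner product over it against Haar measure produces an $\Ad(G)$-invariant inner product $\langle\cdot,\cdot\rangle$ on $\g$, so $\g$ is a compact Lie algebra. The algebraic splitting is then standard: the $\langle\cdot,\cdot\rangle$-orthogonal complement of an ideal is an ideal, so $\g$ is the orthogonal sum of its radical and a semisimple ideal; the radical, being solvable and carrying an invariant inner product, is abelian, hence equals $Z(\g)$; and $\mathfrak k:=[\g,\g]$ carries the restricted invariant positive-definite form, so it is semisimple of compact type with $Z(\mathfrak k)=0$. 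This already gives $\lie(G)=Z(\g)\oplus\mathfrak k$.

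To globalise, pass to the universal cover: since $\g=Z(\g)\oplus\mathfrak k$ splits as Lie algebras, $\widetilde G\cong V\times\widetilde K$ with $V\cong\mathbb R^{\dim Z(\g)}$ and $\widetilde K$ the simply connected group with Lie algebra $\mathfrak k$, which is compact with finite centre by Weyl's theorem. Write $G=\widetilde G/\Gamma$ with $\Gamma\le Z(\widetilde G)=V\times Z(\widetilde K)$ discrete. As $Z(\widetilde K)$ is finite, $\Gamma_0:=\Gamma\cap(V\times\{e\})$ has finite index in $\Gamma$ and spans a subspace $W\subseteq V$ in which it is a lattice; every $\gamma\in\Gamma$ has a power in $\Gamma_0\subseteq W$, and $V/W$ is torsion-free, so in fact $\Gamma\subseteq W\times Z(\widetilde K)$. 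Choosing a complement $V=W\oplus W'$ yields $G\cong W'\times G_1$ with $G_1:=(W\times\widetilde K)/\Gamma$ compact connected; finally the structure theory of compact connected Lie groups writes $G_1=TK$ with $T=Z(G_1)^0$ a torus, $K=[G_1,G_1]$ compact semisimple with discrete centre, and $T\cap K$ a finite central subgroup, so that, with $H:=W'\times T$, one obtains $G=H\times K$.

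I expect this last, global step to be the main obstacle. The decomposition of $\g$ is canonical, but $G$ is reconstructed from $\widetilde G$ only through the discrete central subgroup $\Gamma$, which a priori may entangle the abelian and semisimple directions — the phenomenon already visible in $U(2)=(S^1\times SU(2))/\mathbb Z_2$, where $Z(U(2))^0\cap[U(2),U(2)]=\{\pm I\}$. Peeling off an honest vector factor rests on the torsion-freeness of $V/W$, and the surviving finite overlap $T\cap K$ must then be controlled through the structure theory of compact groups; organising these two points carefully is where the work of Theorem~A concentrates.
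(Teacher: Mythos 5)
Your route to the Lie-algebra decomposition is correct and genuinely different from the paper's. You observe that $\Ad(G)$ sits inside the linear isometry group of $(\g,|\cdot|)$, which is compact, and average to produce an $\Ad$-invariant inner product; the compact-type structure of $\g$ and the splitting $\g=Z(\g)\oplus\s$ then follow from standard Lie theory. The paper instead passes to the Taylor complexification $\g^{\mathbb C}$, shows each $i\,\ad x$ is a Hermitian operator there via numerical-range arguments, deduces $\sigma(\ad x)\subset i\mathbb R$, and from this that the Killing form is negative semi-definite with $Z(\g)=\g^{\perp}=\rad$. Your argument is shorter; the paper's produces Hermiticity of $\ad x$ and the identity $\|\ad x\|=\rho(\ad x)$ as byproducts, which it reuses later (e.g.\ in Theorem~\ref{distanciavsmodulo}).

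For the global step you correctly flag the overlap $T\cap K$ as the obstacle, and your $U(2)=(S^1\times SU(2))/\mathbb Z_2$ example in fact shows it cannot in general be removed: $U(2)$ admits bi-invariant distances, yet it is not a direct product of a connected abelian Lie group and a compact Lie group with discrete centre (the only candidate abelian factor is $Z(U(2))^0=S^1$, and $U(2)\not\cong S^1\times SO(3)$ since their fundamental groups differ). So your closing sentence ``one obtains $G=H\times K$'' is unjustified as written. You should be aware, though, that the paper's own proof of this point is no more complete: it observes that $z+s\mapsto e^se^z$ is a local diffeomorphism near the identity and that $G$ is connected, which shows $G=HK$ but gives no control on $H\cap K$. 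What both arguments actually establish is that $G\cong(H\times K)/F$ for a finite central subgroup $F$; the direct-product statement would require either an additional hypothesis or the weaker Milnor-type decomposition (allowing the compact factor to have non-discrete centre) that the paper itself discusses in the remark following the theorem.
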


Locally, the distance between \(e^{sx}\) and \(e^{sy}\) (\(x, y \in \g\), \(s \in \mathbb{R}\)) equals the norm of the Baker--Campbell--Hausdorff series of \(e^{sy} e^{-sx}\),
\[
d(e^{sx}, e^{sy}) = | BCH(sy, -sx) | = | sy - sx - \frac{s}{2} [y, x] + \dots |.
\]
This was established in prior work~\cite{lar19}, refined here using distance invariance (Equation~\eqref{dibch}). If \([x, y] = 0\), then \(d(e^{sx}, e^{sy}) = |sy - sx|\) for small \(s\). To explore this relation deeply, Section~\ref{sec:s} adapts Milnor's approach~\cite[p.~101]{milnorm}, defining sectional curvature for a 2-plane \(\pi \subset \g\) as
\[
S(x, y) = 6 |y - x|^2 \lim_{r \to 0^+} \frac{r |y - x| - d(e^{rx}, e^{ry})}{r^2 d(e^{rx}, e^{ry})}.
\]
Using BCH expansions, we express \(S(x, y)\) in metric terms and prove:

\begin{theorem*}[B] For \(x, y \in \g\) and any Ad-invariant norm on \(\g\),
\[
S(x, y) = 6 |y - x| \lim_{r \to 0^+} \frac{d(e^{r^2 x}, e^{r^2 y}) - r d(e^{rx}, e^{ry})}{r^4} = -\frac{|y - x|}{4} \max_{\varphi \in N_{y-x}} \varphi([x, [x, y - x]]) \ge 0,
\]
where \(N_{y-x}\) is the set of unit-norm functionals with \(\varphi(y - x) = |y - x|\).
\end{theorem*}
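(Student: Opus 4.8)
The plan is to combine the local formula $d(e^{sx},e^{sy})=|\mathrm{BCH}(sy,-sx)|$ from \eqref{dibch} with three ingredients: the $\Ad$-invariance of the norm, the Baker--Campbell--Hausdorff series up to third order, and a first-order expansion of the (generally non-smooth) norm $|\cdot|$ at the point $z:=y-x$ (we may assume $x\neq y$, so $z\neq 0$). Since $[x,y]=[x,z]$, the BCH series gives $\mathrm{BCH}(sy,-sx)=sz+\tfrac{s^2}{2}[x,z]+\tfrac{s^3}{12}\big(2[x,[x,z]]+[z,[x,z]]\big)+O(s^4)$. I would first simplify this by conjugating the group element $e^{sy}e^{-sx}$ by $e^{-sx/2}$, which leaves its distance to $e$ unchanged; since $\Ad_{e^{-sx/2}}=e^{-(s/2)\ad_x}$ and the norm is $\Ad$-invariant, $d(e^{sx},e^{sy})=\big|e^{-(s/2)\ad_x}\mathrm{BCH}(sy,-sx)\big|$, and a direct computation shows that the quadratic term cancels, leaving
\[
d(e^{sx},e^{sy})=\big|\,sz+s^3 c+O(s^4)\,\big|,\qquad c:=\tfrac{1}{24}[x,[x,z]]+\tfrac{1}{12}[z,[x,z]].
\]

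Next I would record two elementary consequences of $\Ad$-invariance that carry the argument. Fix $\varphi\in N_z$. For any $v\in\g$ the smooth function $t\mapsto\varphi(\Ad_{e^{tv}}z)$ satisfies $\varphi(\Ad_{e^{tv}}z)\le|\Ad_{e^{tv}}z|=|z|=\varphi(z)$ and attains this bound at $t=0$; the first-derivative test yields $\varphi([v,z])=0$ for every $v$, and the second-derivative test with $v=x$ yields $\varphi([x,[x,z]])=\varphi(\ad_x^2 z)\le 0$. In particular $\varphi([z,[x,z]])=0$, so $\varphi(c)=\tfrac{1}{24}\varphi([x,[x,z]])$ for every $\varphi\in N_z$, whence $\max_{\varphi\in N_z}\varphi(c)\le 0$. (This is essentially the description of the subdifferential of an $\Ad$-invariant norm, cf.~\cite{larey}.)

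For the third ingredient: since $|\cdot|$ is convex and locally Lipschitz with compact subdifferential $\partial|\cdot|(z)=N_z$, it admits the first-order expansion $|z+w|=|z|+\max_{\varphi\in N_z}\varphi(w)+o(|w|)$, the error being uniform in the direction of $w$ (Dini's theorem applied to the monotone difference quotients, which converge on the unit sphere to the continuous support function of $N_z$). Inserting $w=s^2c+O(s^3)$ and using positive homogeneity of the maximum together with $|\max_\varphi\varphi(a+b)-\max_\varphi\varphi(a)|\le|b|$ gives
\[
d(e^{sx},e^{sy})=s|z|+s^3\max_{\varphi\in N_z}\varphi(c)+o(s^3),\qquad s\to 0^+.
\]
From here it is bookkeeping: substituting $s=r$ and $s=r^2$, the leading terms $s|z|$ match in each of the two quotients in the statement, and one reads off that the first quotient tends to $-\tfrac{1}{|z|}\max_{\varphi\in N_z}\varphi(c)$ and the second to $-\max_{\varphi\in N_z}\varphi(c)$; multiplying by $6|z|^2$, respectively by $6|z|$, gives in both cases $S(x,y)=-6|z|\max_{\varphi\in N_z}\varphi(c)$. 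Replacing $\varphi(c)$ by $\tfrac{1}{24}\varphi([x,[x,z]])$ turns this into $S(x,y)=-\tfrac{|z|}{4}\max_{\varphi\in N_z}\varphi([x,[x,z]])$ with $z=y-x$, and $\max_{\varphi}\varphi([x,[x,z]])\le 0$ gives $S(x,y)\ge 0$.

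The step I expect to be the main obstacle is the passage from the BCH series to the asymptotics $d(e^{sx},e^{sy})=s|z|+s^3\max_{\varphi}\varphi(c)+o(s^3)$: because $|\cdot|$ need not be smooth, only a one-sided, sublinear first-order expansion is available, and since the final argument substitutes two different powers of $r$ into it, one must check that the $o(\cdot)$-term is genuinely uniform and that neither the $O(s^4)$ BCH tail nor the $O(s^3)$ correction inside the norm contaminates the $s^3$ coefficient. The BCH bookkeeping and the conjugation that kills the quadratic term are routine, but have to be carried out carefully, since the final constant $\tfrac14$ hinges on them.
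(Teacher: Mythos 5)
Your proposal is correct and follows essentially the same route as the paper: conjugating by $e^{-sx/2}$ to kill the quadratic BCH term is exactly the paper's $t=1/2$ specialization in Lemma~\ref{lemaB}, and the uniform first-order expansion of the convex norm at $y-x$ is the ``chain rule for subdifferentials'' of Remark~\ref{chainrule} combined with Proposition~\ref{ese}. The only differences are cosmetic: you push the single BCH series forward by $\Ad_{e^{-sx/2}}$ instead of nesting two BCH expansions as in Lemma~\ref{lemaB}, and you invoke one uniform $o(|w|)$ estimate that treats the powers $r$ and $r^2$ simultaneously, where the paper instead uses a short change-of-variables argument comparing $|b(s)|$ and $|b(s^2)|$.
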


Milnor observed that for Riemannian metrics with normalized \(x, y\), \(S(x, x + y) = \langle R(x, y) y, x \rangle\), where \(R\) is the curvature tensor. Theorem~B characterizes this via norming functionals (subdifferential of the norm), generalizing to Finsler cases.

In the final section, we analyze flatness using root systems in \(\g\)'s compact factor, proving in Theorem~\ref{fs}:

\begin{theorem*}[C] Let \(x, y \in \g\). Consider:
\begin{enumerate}
\item[(1)] \(\varphi([x, [x, y]]) = 0\) for any \(\varphi\) norming \(y - x\).
\item[(2)] \(d(e^{sx}, e^{sy}) = |sy - sx|\) for sufficiently small \(s\).
\item[(3)] \(S(x, y) = 0\).
\end{enumerate}
Then (1) \(\Leftrightarrow\) (2) \(\Rightarrow\) (3). Moreover:
\begin{enumerate}
\item[a)] If the norm is smooth, all conditions are equivalent.
\item[b)] If the norm is strictly convex, all are equivalent to \([x, y] = 0\).
\end{enumerate}
\end{theorem*}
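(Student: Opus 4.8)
\emph{Set-up and easy implications.} Write $z:=y-x$ and $w:=[x,y]=[x,z]$; by Theorem~A we may, when computing brackets, assume $x,z\in\mathfrak k$, the centre of $\g$ being irrelevant there. Two facts drive everything. First, $\Ad$-invariance of $|\cdot|$ forces, for every $v\neq0$, every $\varphi\in N_v$ and every $a\in\g$, the analytic map $t\mapsto\varphi(e^{t\ad a}v)$ to attain its global maximum $|v|$ at $t=0$; hence $\varphi([a,v])=0$, $\varphi([a,[a,v]])\le0$, and (with $a=[x,z]$, $v=z$) also $\varphi([z,[x,z]])=0$. Second, symmetrising the product, $e^{-sx/2}e^{sy}e^{-sx/2}=\exp G(s)$ with $G$ odd analytic and $G(s)=sz+s^{3}v+O(s^{5})$, $v=\tfrac1{24}[x,[x,z]]+\tfrac1{12}[z,[x,z]]$; by bi-invariance $d(e^{sx},e^{sy})=|G(s)|$ for small $|s|$, and since $\varphi(v)=\tfrac1{24}\varphi([x,[x,z]])$ for $\varphi\in N_z$, expanding the norm along $G(s)/s=z+s^{2}v+O(s^{4})$ via its one-sided derivative $\max_{\varphi\in N_z}\varphi(\cdot)$ at $z$ yields
\[
d(e^{sx},e^{sy})=s|z|+\frac{s^{3}}{24}\,\max_{\varphi\in N_z}\varphi\bigl([x,[x,z]]\bigr)+o(s^{3})\qquad(s\to0^{+}),
\]
consistent with Theorem~B. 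Each summand $\varphi([x,[x,z]])\le0$, so the correction is $\le0$; condition $(1)$ makes it vanish, so $(1)\Rightarrow(3)$, and condition $(2)$ makes it vanish, so $(2)\Rightarrow(3)$.

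\emph{The core equivalence $(1)\Leftrightarrow(2)$.} Here the root-space picture enters. Fix a maximal abelian $\mathfrak t\ni x$ in $\mathfrak k$ and an auxiliary $\Ad$-invariant inner product, so $\ad_x$ is skew-symmetric with spectrum $\{0\}\cup\{\pm i\theta_\alpha\}$ and $\g=\ker\ad_x\oplus\bigoplus_\alpha\g_{[\alpha]}$; write $z=z_0+z'$, $z_0\in\ker\ad_x$, $z'=\sum_{\theta_\alpha\neq0}z_{[\alpha]}$, so $[x,[x,z]]=-\sum_\alpha\theta_\alpha^{2}z_{[\alpha]}=:-q$ and $[x,z]=0\Leftrightarrow z'=0$. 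Condition $(1)$ is precisely: $q$ is annihilated by every $\varphi\in N_z$. The plan is to identify this with the assertion that the $\ad_x$-orbit $c(t)=e^{t\ad x}z$ — a trigonometric curve lying on the sphere $\{|\cdot|=|z|\}$ through $z$ — stays inside the face $F_z$ of the $|z|$-ball at $z$ for all $t$. Granting that, the whole torus orbit $T\cdot z$ with $T:=\overline{\{e^{t\ad x}:t\in\mathbb R\}}$ lies in $F_z$, hence $\ad_x^{k}z$ lies for every $k\ge1$ in the linear subspace killed by all of $N_z$; feeding this into the $\Ad$-equivariant identity $\mathrm{BCH}(-sx,sy)=e^{-(s/2)\ad x}G(s)$ term by term forces $|\mathrm{BCH}(-sx,sy)|=d(e^{sx},e^{sy})=s|z|$ exactly for small $s$, i.e.\ $(2)$; conversely, if $(2)$ holds, the displayed expansion together with its second-order analogue taken along the orbit forces $q$ to be killed by all of $N_z$, i.e.\ $(1)$. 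The step ``$q$ killed by $N_z$'' $\Leftrightarrow$ ``orbit in $F_z$'' is the crux: it uses that $c(t)$ is a genuine trigonometric polynomial (so a maximum at which both the first derivative, automatically, and the second derivative, by $(1)$, vanish cannot pull away from the supporting hyperplanes in the relevant directions) together with the fact that the hypothesis runs over the whole compact convex set $N_z$ rather than a single functional.

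\emph{Refinements, and the main obstacle.} If $|\cdot|$ is smooth then $N_v$ is a singleton for every $v\neq0$, so ``for all $\varphi\in N_z$'' and ``for some $\varphi\in N_z$'' coincide: $(1)\Leftrightarrow(3)$, and with the core equivalence all three conditions agree. If $|\cdot|$ is strictly convex one shows $(1)\Rightarrow z'=0$, i.e.\ $[x,y]=0$: conceptually this is Milnor's rigidity in metric form — $d(e^{tx},e^{ty})=t|y-x|$ for all small $t$ makes the geodesic triangle $e,e^{tx},e^{ty}$ isometric to its flat comparison triangle, and with non-negative curvature and a strictly convex norm this forces the region it spans, hence the subgroup generated by $x$ and $y$, to be flat, hence abelian; in root-space terms it is the statement that a strictly convex $\Ad$-invariant unit ball cannot be ``flat'' along the directions $z_{[\alpha]}$ unless they vanish. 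Conversely, $[x,y]=0$ makes $[x,[x,z]]=0$ and $e^{sx},e^{sy}$ commute, so all conditions hold trivially. The main obstacle is exactly the passage from the second-order vanishing $(1)$ to the exact identity $(2)$: \emph{all} higher-order terms of $\mathrm{BCH}(-sx,sy)$ must be controlled at once, and the only economical route seems to be the orbit-in-the-face reformulation above, which leans on $\Ad$-invariance placing the orbit on a sphere, on the precise trigonometric-polynomial nature of $\ad_x$-orbits, and on the condition being imposed over all of $N_z$; one must also keep in mind that, absent smoothness and strict convexity, the distance in $(2)$ can be realised by paths quite different from the obvious one-parameter translate, which is why $(1)$ is phrased through norming functionals rather than through $[x,y]$ itself.
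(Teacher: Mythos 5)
The expansion you derive for $d(e^{sx},e^{sy})$ is correct and does establish $(1)\Rightarrow(3)$ and $(2)\Rightarrow(3)$. But that is not where the theorem lives: the substance is the equivalence $(1)\Leftrightarrow(2)$, and your ``core equivalence'' paragraph is a programme, not a proof. You reduce everything to the claim that condition $(1)$ (second-order vanishing against \emph{all} of $N_{y-x}$) forces the entire $\ad_x$-orbit $c(t)=e^{t\ad x}(y-x)$ to stay in a face $F_\varphi$, and then say ``Granting that\dots''. The passage from a second-order tangency condition to the exact inclusion of a trigonometric curve in a face for all $t$ is precisely the step that needs an argument — a curve can be tangent to second order to a supporting hyperplane at one point and still leave the face — and you acknowledge as much (``the main obstacle is exactly the passage from the second-order vanishing $(1)$ to the exact identity $(2)$'') without closing it. Running the condition ``over all of $N_z$'' is suggestive, but by itself it does not bound the fourth- and higher-order terms of $\mathrm{BCH}(-sx,sy)$.

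The paper closes this gap by an algebraic, not geometric, device that your write-up never touches: the notion of an \emph{adapted} norming functional. Lemma~\ref{adaptada} produces, for each $v=y-x$, a distinguished Riesz representer $z_\varphi$ of minimal auxiliary (Frobenius) norm in the compact convex set of representers, and shows it is adapted (its root support is contained in that of $v$). Theorem~\ref{teoNbis} then converts $\varphi([x,[x,v]])=0$ into the \emph{exact} commutation $[z_\varphi,x]=0$; combined with the automatic $[z_\varphi,v]=0$ one gets $[z_\varphi,\cdot]=0$ on all of $\mathrm{Lie}(x,y)$, whence $\varphi$ kills every bracket term of the BCH series and $\varphi(B(sy,-sx))=s|y-x|=|B(sy,-sx)|$ follows at once. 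This is the mechanism that controls all higher orders simultaneously, and it has no counterpart in your proposal. Likewise the strictly convex refinement (b) is hand-waved via a ``Milnor rigidity'' heuristic; the paper proves it in one line from part (3) of Theorem~\ref{teoNbis} ($F_\varphi=\{v\}\Rightarrow[x,v]=0$). You should either prove the ``orbit in the face'' claim — which, if true, would be a genuinely different route — or adopt the adapted-functional machinery; as it stands the central implication is missing.
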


We provide an example showing sharpness, drawn from orthogonal/unitary groups with spectral or trace norms (wich are neither smooth nor strictly convex). The distance condition is related to a certain path contained in a face of the sphere of the Finsler norm, as explained in Section \ref{sec:s}.  We finish the paper with a convenient definition of sectional curvature $sec(\pi)$, $\pi\subset\g$, that takes into account the non-symmetry of the spheres (and extends the Riemannian defintion). We show that under reasonable normalization hypothesis, in this setting one always has $0\le sec(\pi)\le 1$.

\medskip

The geometry of the homogeneous spaces of groups with bi-invariant distances is part of our forthcoming research, based on these results.

\section{Groups with bi-invariant distances}\label{groups}

In this section we describe connected, locally compact, locally contractible  groups that admit a particular distance; by the theorem of Montgomery and Zippin about Hilbert's fifth problem (see \cite{montgomery}), these are in fact finite dimensional Lie groups.

\begin{defi}[Bi-invariant intrinsic distances] A (non-necessarily reversible) distance is a function $\di:X\times X\to \mathbb R_{\geq 0}$ with all the properties of a distance map, except that for some $x,y$ it might be $\di(x,y)\ne \di(y,x)$. As such, it defines a topology on $X$ in the usual fashion. A (possible non-reversible) distance in a topological space is \emph{intrinsic} if its topology is equal to the original topology of the given space. More details on asymmetric distances can be found in \cite{mennu2} and the references therein. If $\di$ is a metric in a topological group $G$, we say that it is \emph{bi-invariant} if
\begin{equation}\label{bi-invd}
\di(gh,gk)=\di(h,k)=\di(hg,kg) \quad \forall \, g,h,k\in G.
\end{equation}
\end{defi}
We will first study the smooth case, so in this section we discuss $\Ad$-invariant norms in Lie groups $G$, and obtain a characterization in terms of its Lie algebra. Denote with $TG$ the tangent bundle of a Lie group $G$, which can be identified via left or right multiplication with $G\times \g$. Denote also with $L_g$ the differential of $\ell_g$, left multiplication by $g\in G$ in the group $G$.

\begin{rem}[Left-invariant metrics]\label{norrma}
For a given Finsler norm in  $\g$ and $g\in G$  the \emph{left-invariant Finsler metric} is defined as $|v|_g=|L_g^{-1}v|$ for $v\in T_gG$, with $|\cdot|_*:TG\to \mathbb R_{\ge 0}$. If $g,h\in G$ then
\[
|L_hv|_{hg}=|L_{hg}^{-1}L_h v|=|L_g^{-1}v|=|v|_g\quad \textrm{ for }v\in T_gG.
\]
The map $(g,v)\mapsto |v|_g=|g^{-1}v|$ is continuous as a map from $TG$ to $\mathbb R$. Any left-invariant Finsler metric in $TG$ can be obtained with this procedure. If the Finsler norm is $\Ad$-invariant then
$$
|R_gv|_{gh}=|L_{gh}^{-1}R_h v|=|\Ad_h^{-1}L_{g^{-1}}v|=|L_g^{-1}v|=|v|_g.
$$
and we say that the Finsler metric in $TG$ is \emph{bi-invariant}.
\end{rem}

\begin{defi}[Rectifiable paths and length]\label{recti}
A curve $\alpha:[a,b]\to G$ is \textit{rectifiable} if $\alpha$ is differentiable a.e. in some chart of $G$ and $t\mapsto |\alpha'(t)|_{\alpha(t)}$ is Lebesgue integrable. For piecewise smooth or rectifiable arcs $\alpha:[a,b]\to G$, we define the \textit{length} of $\alpha$ as
$$
\LU(\alpha)=\int_a^b |\alpha'(t)|_{\alpha(t)} dt=\int_a^b |L_{\alpha}^{-1}{\alpha}'|.
$$
\end{defi}

\begin{defi}
For $g,h\in G$, consider the infima of the lengths of such arcs joining $g,h$ in $G$,
$$
\di(g,h)=\inf\{ \LU(\alpha)\, |\, \alpha:[a,b]\to G\textrm{ is rectifiable }, \alpha(a)=g,\alpha(b)=h\}.
$$
It is straightforward to check that $\di:G\times G\to \mathbb R_{\ge 0}$ is a (non-necessarily reversible) distance: i.e. it is finite, obeys the triangle inequality and it is non-degenerate. However it might be that $\di(g,h)\ne \di(h,g)$ for some $h,g\in G$.
\end{defi}

\begin{rem} Since the metric in $TG$ is left-invariant, then $L(g\alpha)=L(\alpha)$ for any $g\in G$, and thus the distance is left-invariant: only the first equality in \eqref{bi-invd} holds. Moreover, by the smoothness of the map $L$ and the local compacity of the group $G$, the topology induced by this distance is equivalent to the original topology of the Lie group. 

This distance is reversible if and only if $|\cdot|$ is fully homogeneous, for in this case
\[
L(\alpha h)=\int_a^b|L_{\alpha h}^{-1}R_h \alpha'|=\int_a^b |\Ad_{h^{-1}} L_{\alpha}^{-1}\alpha'|=\int_a^b |L_{\alpha}^{-1}\alpha'|=L(\alpha) \quad \textrm{ for any } h \in G.
\]
\end{rem}

\begin{rem}If the Finsler norm is $\Ad$-invariant, the length of paths is bi-invariant, and then the distance in $G$ is bi-invariant as in equation \eqref{bi-invd}. The property $\di(g,h)=\di(g^{-1},h^{-1})$ can be easily established \textit{only} in the case of fully homogeneous norms, by means of the equation
$(\gamma^{-1})'=-\gamma^{-1}\gamma'\gamma^{-1}$. 
\end{rem}

\subsection{Lie groups with $\Ad$-invariant Finsler norms}

We first recall an abtract analogue of a result obtained in another setting:

\begin{lem}\label{desilie}
Let $G$ be a connected Lie group with Lie algebra $\mathfrak g$. Assume that $\mathfrak g$ admits an $\Ad$-invariant Finsler  norm $|\cdot |$. Let $0\ne v \in G$  and $\varphi\in\mathfrak g^*$ be of unit norm with $\varphi(v)=|v|$. Then for any $x\in\mathfrak g$ we have  $\varphi ([v,x])=0$ and $\varphi([x,[x,v]])\le 0$.
\end{lem}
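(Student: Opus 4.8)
The plan is to exploit the $\Ad$-invariance of the norm to freeze certain directional derivatives. Fix $0\neq v\in\g$ and a unit-norm functional $\varphi$ with $\varphi(v)=|v|$; such a $\varphi$ exists by Hahn--Banach. For $x\in\g$ and $t\in\mathbb R$ consider the smooth curve $t\mapsto \Ad_{e^{tx}}v = e^{t\,\ad_x}v$. Since the norm is $\Ad$-invariant, $|\Ad_{e^{tx}}v| = |v|$ for all $t$, so the function $f(t) = \varphi(\Ad_{e^{tx}}v)$ satisfies $f(t)\le \|\varphi\|\,|\Ad_{e^{tx}}v| = |v| = f(0)$, i.e.\ $f$ attains its maximum at $t=0$.

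First I would differentiate once: $f'(0) = \varphi(\ad_x v) = \varphi([x,v])$, and since $t=0$ is an interior maximum of the differentiable function $f$, we get $f'(0)=0$, hence $\varphi([x,v]) = 0$; replacing $x$ by $-x$ (or using bilinearity of the bracket) gives $\varphi([v,x])=0$ as claimed. Next I would differentiate twice: $f''(0) = \varphi(\ad_x^2 v) = \varphi([x,[x,v]])$, and since $t=0$ is a maximum of the $C^\infty$ function $f$, the second derivative test gives $f''(0)\le 0$, hence $\varphi([x,[x,v]])\le 0$.

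There is essentially one point requiring a little care rather than a genuine obstacle: the functional $\varphi$ is only assumed to be of unit norm for the (possibly non-symmetric) dual norm, so a priori one only controls $\varphi(w)\le |w|$ and not $|\varphi(w)|\le |w|$; but this one-sided bound is exactly what is needed, since we only use $f(t)\le|v|$ together with $f(0)=|v|$, which already forces $t=0$ to be a maximum. I would also note that $f$ is genuinely smooth in $t$ because $e^{t\,\ad_x}v$ is an entire (matrix-exponential) function of $t$ valued in the finite-dimensional space $\g$, so differentiating under $\varphi$ is legitimate and the second-derivative test applies verbatim.

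The only mild subtlety worth flagging is that one must not invoke any smoothness or strict convexity of the norm itself here — the argument uses nothing about $|\cdot|$ beyond $\Ad$-invariance and the existence of a norming functional, which is precisely why the conclusion holds for \emph{every} $\Ad$-invariant Finsler norm. This makes Lemma~\ref{desilie} the natural infinitesimal shadow of the global distance identities quoted in the introduction, and I would present it in exactly that spirit.
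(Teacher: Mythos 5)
Your proof is correct and uses the same idea as the paper's: the paper Taylor-expands $\varphi(e^{s\,\ad x}v)$ and compares signs as $s\to 0^{\pm}$, which is exactly the first- and second-derivative test at the interior maximum $t=0$ that you invoke. Your remark that only the one-sided bound $\varphi(w)\le\|\varphi\|\,|w|$ is available for a non-reversible Finsler norm, and that this suffices, is a correct and worthwhile clarification that the paper leaves implicit.
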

\begin{proof}
The proof is quite simiilar to that given in \cite{larey} in the context of skew-Hermitian matrices, we include it here since we will modify it a bit later on. For any $s\in\mathbb R$ we have
\[
|v|=|\Ad_{e^{s\,  \ad x}}v|\ge \varphi( \Ad_{e^{s\,  \ad x}}v)=\varphi(v)+s \varphi([x,v])+\frac{1}{2}s^2 \varphi([x,[x,v]])+O(s^3).
\]
Since $\varphi(v)=|v|$, dividing by $s>0$ and letting $s\to 0^+$ we have $\varphi([x,v])\le 0$, dividing by $s>0$ and letting $s\to 0^-$ we get $\varphi([x,v])\ge 0$. This proves the first claim. Then
\[
\frac{1}{2}s^2 \varphi([x,[x,v]])+O(s^3)\le 0
\]
and dividing by $s^2$ and letting $s\to 0$ we get the second claim.
\end{proof}

\begin{defi}[Killing form, nilpotents and ideals]
The \emph{Killing form} $\B:\g\times \g\to\mathbb R$ is the bilinear form defined as
\[
\B(x,y)=\tr(\ad x\circ \ad y),\qquad x,y\in\mathfrak g.
\]
Due to the Jacobi identity $\ad[x,y]=\ad x\circ \ad y- \ad y\circ \ad x$ and the ciclicity of the trace map we have
\begin{equation}\label{killcyc}
\B([x,y],z)=\tr(\ad[x,y]\circ \ad z)=\tr(\ad x \circ \ad [y,z])=\B(x,[y,z]).
\end{equation}
The \emph{Killing ideal} is defined as
\[
\mathfrak g^{\perp}=\{x\in \mathfrak g: \B(x,y)=0 \quad \forall y\in \mathfrak g\},
\]
it is an ideal due to the identity \eqref{killcyc}. The \emph{center} $Z(\g)\subset \mathfrak g$ is defined as those $x$ such that $\ad x=0$. The \emph{radical} $\rad$ of $\g$ is its maximal solvable ideal. We have
\begin{equation}\label{radic}
Z(\g)\subset \g^{\perp}\subset \rad.
\end{equation}
The first inclusion is clear, the second one comes from Cartan's criterion of solvability \cite[page 20]{lieH}: an ideal $\mathfrak a$ is solvable if and only if $\B(v,z)=0$ for all $v\in [\mathfrak a,\mathfrak a]$ and all $z\in \mathfrak a$.
\end{defi}

\begin{rem}\label{norposta}
If $|\cdot|$ is a Finsler norm on a real vector space, then $|x|'=|x|+|-x|$ is an actual norm. If the original Finsler norm is $\Ad_G$ invariant, then this norm is also $\Ad_G$ invariant. Hence for the purposes of what's left in this subsection, we can assume when needed that we are dealing with a norm.
\end{rem}

\subsubsection{Hermitian operators, numerical range and spectrum}

\begin{rem}[Complexification, Taylor norms and spectrum]\label{taylor} Let $x,y\in \mathfrak g$, and for a given $\Ad$-invariant Finsler norm $|\cdot |$  in $\mathfrak g$, let 
$$
|x+iy|_T=\sup_{t\in [0,2\pi]} |x\cos t-y\sin t|
$$ 
be the Taylor Finsler norm of  $x+iy$ in the complexification $\mathfrak g^{\mathbb C}=\mathfrak g\oplus i\mathfrak g$ of $\mathfrak g$. This is a norm in the complexification that extends the original norm, and it is easy to check that $\|x+iy\|_T=\|x-iy\|_T$ (there are many possible complexifications, see \cite{munioz}). 

For a bounded linear operator $T:\g\to \g$, its complexification is defined as $T^{\mathbb C}(x+iy)=Tx+iTy$. It is easy to check that it is complex linear and bounded. The \emph{real spectrum} of $T$ is defined as 
\[
\sigma_{\mathbb R}(T)=\{t\in\mathbb R: T-t\textrm{ id }\; \textrm{ is not invertible}\}.
\]
An elementary fact that we will be using is that  $\sigma_{\mathbb R}(T)=\sigma(T^{\mathbb C})\cap \mathbb R$, and due to this we will drop both the superindex $\mathbb C$ for opearators, and the subindex $\mathbb R$ for the spectrum.

We claim that $|\cdot |_T$ is $\Ad_G$-invariant. For if $g\in G$ and $z=x+iy\in\mathfrak g^{\mathbb C}$, then
\begin{align*}
|\Ad_g(x+iy)|_T & =|\Ad_g(x)+i\Ad_g(y))|_T=\sup_{t\in [0,2\pi]} |\Ad_g(x)\cos t-\Ad_g(y)\sin t|\\
& =\sup_{t\in [0,2\pi]} |\Ad_g(x\cos t-y\sin t)|=\sup_{t\in [0,2\pi]} |x\cos t-y\sin t|  = |x+iy|_T.
\end{align*}
\end{rem}

\begin{defi}[Numerical range and Hermitian operators]
\label{numr} Let $(X,|\cdot|)$ be complex complete normed space. For non-zero $z\in X$ let 
\[
N_z=\{\varphi\in X^*: \|\varphi\|=1, \quad \varphi(z)=|z|\}.
\]
This is a nonempty closed convex set, by the Hahn-Banach theorem. Let $A$ be a bounded linear operator in $X$, the \emph{spatial numerical range} of $A$ is the set $V(A)\subset \mathbb C$ defined as
\[
V(A)=\{\varphi(Az): \varphi\in N_z,\quad |z|=1\}.
\]
It is well-known that $V(A)$ is connected and that if $\sigma(A)$ is the spectrum of $A$, then 
\begin{equation}\label{spvsra}
\sigma(A)\subset  \overline{V(A)},
\end{equation}
the closure of $V(A)$ in $\mathbb C$ (see \cite{bd1} pages 88 and 102 for proofs). The \emph{spectral radius of $A$} is $\rho(A)=\max \{|\lambda|: \lambda\in\sigma(A)\}$. An operator $A$ on $X$ is \emph{Hermitian} if for any $z\in X$  we have $|e^{i s A}z|=|z|$ for all $s\in \mathbb R$. It is also well-known that for Hermitian operators,
\begin{enumerate}
\item $\mathrm{co}\,\sigma(A)=\overline{\mathrm{co}\, V(A)}$
\item $\rho(A)=\|A\|=\max\limits_{|x|=1}|Tx|$.
\end{enumerate}
Here $\mathrm{co}$ denotes the convex hull of a subset of $\mathbb C$. See \cite{bd1} pages 53 and 86 for the proof of the first assertion (the closed convex hull of the spatial numerical range is the intrinsic numerical range), and see \cite{sinclair} for the second assertion. 
\end{defi}

\begin{teo}\label{teonorm}If $\g$ admits an $\Ad$-invariant Finsler norm, then 
\begin{enumerate}
\item $\sigma(\ad x)\subset i\mathbb R$ for any $x\in \g$.
\item $\B$ is  negative semi-definite in $\g$.
\item Every nilpotent of $\g$ is central.
\item $Z(\g)=\g^{\perp}=\{x\in \g: \B(x,x)=0\}=\rad$.
\end{enumerate}
\end{teo}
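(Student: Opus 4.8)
The plan is to extract all four items from one observation: for every $x\in\g$ the one-parameter group $\Ad_{e^{sx}}=e^{s\,\ad x}$ acts by isometries on the complexification $(\g^{\mathbb C},|\cdot|_T)$ (this is precisely the $\Ad$-invariance of the Taylor norm established in Remark~\ref{taylor}, together with $\Ad_{e^{sx}}=e^{s\,\ad x}$), so that $A:=-i\,\ad x$ is a Hermitian operator on that space in the sense of Definition~\ref{numr}. For~(1): since $e^{s\,\ad x}$ is an isometry, $\|e^{s\,\ad x}\|=1$ and hence $\rho(e^{s\,\ad x})\le 1$ for \emph{every} $s\in\mathbb R$; the spectral mapping theorem gives $e^{s\lambda}\in\sigma(e^{s\,\ad x})$ for each $\lambda\in\sigma(\ad x)$, so $e^{s\,\re\lambda}=|e^{s\lambda}|\le 1$ for all real $s$, which forces $\re\lambda=0$. (Equivalently: the numerical range of the Hermitian operator $A$ is real, so $\sigma(A)\subseteq\overline{V(A)}\subseteq\mathbb R$ by~\eqref{spvsra}, and $\sigma(\ad x)=i\,\sigma(A)$.)

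Part~(2) is then immediate: if $\lambda_1,\dots,\lambda_n\in\sigma(\ad x)$ denote the eigenvalues of $\ad x$ counted with multiplicity, then $\B(x,x)=\tr((\ad x)^2)=\sum_j\lambda_j^2\le 0$, since each $\lambda_j\in i\mathbb R$; as $\B$ is symmetric, it is negative semi-definite. For~(3), let $x$ be a nilpotent element, i.e.\ $\ad x$ a nilpotent endomorphism, so $\rho(\ad x)=0$. Applying the identity $\|A\|=\rho(A)$ valid for Hermitian operators (Definition~\ref{numr}) to $A=-i\,\ad x$ gives $\ad x=0$, i.e.\ $x\in Z(\g)$. (One can also argue directly: if $\ad x\neq0$ were nilpotent, pick $v$ with $\ad x\,v\neq0$ and $(\ad x)^2v=0$; then $s\mapsto|v+s\,\ad x\,v|=|e^{s\,\ad x}v|$ would be constant along an affine line, which is impossible.) In particular $\n\subseteq Z(\g)$, because every element of the nilradical acts nilpotently on $\g$, and conversely $Z(\g)\subseteq\n$ trivially.

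For~(4), recall $Z(\g)\subseteq\g^{\perp}\subseteq\rad$ from~\eqref{radic}, and $\g^{\perp}\subseteq\{x:\B(x,x)=0\}$ trivially. The Cauchy--Schwarz inequality applied to the positive semi-definite form $-\B$ yields $\B(x,y)^2\le\B(x,x)\B(y,y)$, so $\B(x,x)=0$ forces $\B(x,\cdot)\equiv0$, i.e.\ $x\in\g^{\perp}$; hence $\g^{\perp}=\{x:\B(x,x)=0\}$. It remains to prove $\rad\subseteq Z(\g)$. Here I would use the standard structural fact $[\g,\rad]\subseteq\n$; combined with~(3) this gives $[\g,\rad]\subseteq Z(\g)$, so for $r\in\rad$ the operator $\ad r$ maps $\g$ into $Z(\g)$, whence $(\ad r)^2=0$. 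Thus $\ad r$ is nilpotent, and~(3) gives $r\in Z(\g)$. Chaining the inclusions, $Z(\g)=\g^{\perp}=\{x:\B(x,x)=0\}=\rad$.

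The only genuinely delicate point is step~(1): one must pass to the complexification equipped with the Taylor norm, so that the real-spectrum convention of Remark~\ref{taylor} actually records all complex eigenvalues of $\ad x$, and then invoke either the spectral mapping theorem for the isometry group $\{e^{s\,\ad x}\}_{s\in\mathbb R}$ or the numerical-range theory of Hermitian operators recalled in Definition~\ref{numr}. Everything after that is bookkeeping on top of~(1)--(3) together with the single external input $[\g,\rad]\subseteq\n$ (itself a consequence of Lie's theorem).
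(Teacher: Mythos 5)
Your proof is correct, and the core analytic input --- passing to the complexification $(\g^{\mathbb C},|\cdot|_T)$, using that $i\,\ad x$ is Hermitian there, and invoking $\rho=\|\cdot\|$ for Hermitian operators --- is the same as in the paper. But you diverge on two points that are worth noting. For part~(1) the paper argues through Lemma~\ref{desilie} to show directly that the spatial numerical range of $\ad x$ is purely imaginary and then quotes~\eqref{spvsra}; your primary route instead applies the spectral mapping theorem to the isometry group $\{e^{s\,\ad x}\}_{s\in\mathbb R}$ and lets $s\to\pm\infty$, which is equivalent but a bit more self-contained. For part~(4) the difference is more substantial. The paper shows $\{x:\B(x,x)=0\}\subset Z(\g)$ by squeezing the spectrum of $\ad x$ to zero, and then rules out non-central elements of $\rad$ by contradiction using Cartan's solvability criterion in the form $[\rad,\rad]\subset\g^{\perp}$. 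You instead get $\g^{\perp}=\{x:\B(x,x)=0\}$ cleanly by Cauchy--Schwarz for the semi-definite form $-\B$, and then close the loop with the structural fact $[\g,\rad]\subset\mathrm{Nil}(\g)$, so that $(\ad r)^2=0$ for $r\in\rad$ and (3) forces $r\in Z(\g)$. This is a genuinely different reduction: it routes everything through item~(3) rather than through the Killing ideal. Both arguments lean on a standard Lie-theoretic theorem (Cartan's criterion for the paper, $[\g,\rad]\subseteq\mathrm{Nil}(\g)$ for you); your version is arguably more transparent because it never needs to produce an eigenvector of $\ad^2x$ or run the contradiction, but it does require (3) to be established first, whereas the paper essentially proves (3) and (4) in one sweep. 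One tiny caveat: the inclusion $[\g,\rad]\subseteq\mathrm{Nil}(\g)$ is not merely a consequence of Lie's theorem as you suggest in passing --- it also uses the Levi decomposition (or Cartan's criterion) --- but it is a standard result in the same Bourbaki chapter the paper already cites, so this does not affect the validity of the argument.
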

\begin{proof}
By Remark \ref{norposta}, $\g$ admits an actual $\Ad_G$ invariant norm, which we denote for this proof also as $|\cdot|$.  Let $v\in\g$, considered as an element of $\g^{\mathbb C}$. Let $\varphi$ be any unit norm linear functional on $\g^{\mathbb C}$ such that $\varphi(v)=|v|_T=|v|$. Let $A$ be the complexification of $\ad x$. Repeating the proof of Lemma \ref{desilie}, we obtain that for any $z\in \g^{\mathbb C}$, we have $\re\varphi (\ad x (z))=0$, which shows that $V(A)\subset i\mathbb R$, hence the first claim follows from \eqref{spvsra}. 

Since $\sigma(\ad x)\subset i\mathbb R$ for any $x\in \g$, then $\sigma(\ad^2 x)\subset (-\infty,0]$ and then $\B(x,x)=\tr(\ad^2x)\in (-\infty,0]$ showing that $\B$ is negative semi-definite.

Now let $x\in \g$. By the $\Ad_g$ invariance of the complexified norm, we get that $i\ad x$ is an Hermitian operator of $(\g^{\mathbb C},|\cdot|_T$), and in particular $\|\ad x\|=\|i\ad x\|=\rho(\ad x)$ by the results recalled before this theorem. If $\tr(\ad^2 x)=\B(x,x)=0$, and since the spectrum of $\ad^2 x$ is non-positive, then it must be that $\sigma( \ad^2 x)=\{0\}$, thus $\sigma(\ad x)=\{0\}$. But then $\|\ad x\|=\rho(\ad x)=0$, and this shows that $\ad x=0$ i.e. $x\in Z(\g)$. Since the inclusion $\g^{\perp}\subset \{x: \B(x,x)=0\}$ is trivial, from \eqref{radic} and what we just proved, we have so far that
\[
Z(\g)=\g^{\perp}=\{x\in \g: \B(x,x)=0\}\subset \rad.
\]
Now assume that $x\in \rad$, $x\notin Z(\g)$. Then it must be that $\sigma(\ad x)\ne \{0\}$ by the previous discussion or equivalently, there exists $r\ne 0$ and $y\ne 0$ such that 
\[
[x,[x,y]]=\ad^2 x (y)=-r^2 y.
\]
Being and ideal, we have that $[x,y]\in \rad$ also. But $[\rad ,\rad]\subset \g^{\perp}$  \cite[Chapter I, $\S$5.5]{bou1} hence $[x,[x,y]]\in \g^{\perp}=Z(\g)$. Thus $y\in Z(\g)$ and $[x,y]=0$ implying $y=0$, a contradiction. 
\end{proof}

\begin{teo}If $\g$ admits an $\Ad$-invariant Finsler norm, then $\g=Z(\g)\oplus \s$ where $\s$ is a compact semi-simple Lie algebra ($\B(x,x)<0$ for all $0\ne x\in \s)$. Moreover, $G$ is the direct product of the connected commutative group $H=\exp(Z(\g))$ and the compact connected group with finite center $K=\exp(\s)$ (hence $K$ is semi-simple).
\end{teo}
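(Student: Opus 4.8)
The proof divides into an algebraic part, decomposing $\g$, and a group-theoretic part, lifting the decomposition to $G$; the algebraic input is supplied entirely by Theorem~\ref{teonorm}. First I would decompose $\g$ as a Lie algebra. By Theorem~\ref{teonorm}(4) the radical $\rad$ equals the centre $Z(\g)$. Take a Levi decomposition $\g=\rad\rtimes\s=Z(\g)\rtimes\s$ with $\s$ a semisimple Levi subalgebra. Since $Z(\g)$ is central we have $[\s,Z(\g)]=0$, so the semidirect sum collapses to a direct sum of ideals $\g=Z(\g)\oplus\s$; and $Z(\g)\cap\s\subseteq Z(\s)=0$ because $\s$ is semisimple, so this is also a vector-space direct sum.

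Next I would show that $\s$ is of compact type. As $\g=Z(\g)\oplus\s$ is a direct sum of ideals, for $x\in\s$ the operator $\ad x$ annihilates $Z(\g)$ and restricts on $\s$ to $\ad_\s x$, so $\B(x,x)=\tr((\ad x)^2)=\B_\s(x,x)$, the Killing form of $\s$. By Theorem~\ref{teonorm}(4), $\B(x,x)=0$ forces $x\in Z(\g)$, hence $\B_\s$ is negative definite on $\s$; this yields the asserted strict inequality $\B(x,x)<0$ for all $0\ne x\in\s$. A semisimple real Lie algebra with negative-definite Killing form is of compact type: the adjoint group $\mathrm{Int}(\s)$ is a closed subgroup of the (compact) orthogonal group of the positive-definite form $-\B_\s$, hence compact, and by Weyl's theorem the simply connected group $\tilde K$ with $\lie(\tilde K)=\s$ then has finite centre and is compact.

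Finally, the group statement, which I expect to be the main obstacle. The universal cover of $G$ is $\tilde G\cong\mathbb R^{\,n}\times\tilde K$, with $n=\dim Z(\g)$ and $\tilde K$ the compact group just found, so $G=\tilde G/\Gamma$ for a discrete central subgroup $\Gamma$. Put $H=\exp_G(Z(\g))$ and $K=\exp_G(\s)$, the integral subgroups with those Lie algebras. For $X\in Z(\g)$ one has $\Ad_{\exp X}=e^{\ad X}=\mathrm{id}$, so $H\subseteq Z(G)$; in fact $H=Z(G)^\circ$, a closed connected abelian subgroup. Meanwhile $K$ is the image in $G$ of the compact group $\{0\}\times\tilde K$, hence a compact connected subgroup with Lie algebra $\s$; since $Z(\s)=0$ its centre is discrete, so $K$ is semisimple with finite centre. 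Since $[Z(\g),\s]=0$, $H$ and $K$ commute elementwise, so multiplication $m\colon H\times K\to G$ is a homomorphism whose image is the integral subgroup for $Z(\g)\oplus\s=\g$, i.e.\ all of $G$, and whose kernel $\{(h,h^{-1}):h\in H\cap K\}$ sits in the finite group $H\cap Z(K)$; this exhibits $G=HK$ as the product of the commuting closed subgroups $H=\exp(Z(\g))$ and $K=\exp(\s)$ via the finite covering $m$, which is the stated decomposition once one checks — by locating $\Gamma$ in the split cover $\mathbb R^{\,n}\times\tilde K$ — that $H\cap K=\{e\}$. The crux is this last step: Weyl's theorem is what makes $\tilde K$, hence $K$, compact (without it the ``compact factor'' could fail to be compact), and the analysis of $\Gamma$ is what identifies $H$, $K$ and the kernel of $m$; everything preceding it is routine structure theory built on Theorem~\ref{teonorm}.
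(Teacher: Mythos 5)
Your argument parallels the paper's: the Lie-algebra decomposition comes from Theorem~\ref{teonorm} (yours via $\rad=Z(\g)$ and a Levi complement that automatically centralises the radical; the paper's via reductivity and Levi--Malcev, citing Bourbaki), and Weyl's theorem supplies compactness of the semisimple factor. Up to the algebra level your reasoning is correct and more explicit than the paper's.

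At the group level you flag the crux and leave it unproved: you reduce everything to the check $H\cap K=\{e\}$, ``by locating $\Gamma$ in the split cover $\mathbb{R}^{\,n}\times\tilde K$''. Your caution here is well founded, because that check fails in general. Take $G=U(2)$ with the operator norm, which is $\Ad$-invariant. Then $Z(\g)=i\mathbb{R}\cdot I$ and $\s=\mathfrak{su}(2)$, so $H=\exp(Z(\g))=\{e^{it}I:t\in\mathbb R\}=Z(U(2))\cong S^1$ and $K=\exp(\s)=SU(2)$, whence $H\cap K=\{\pm I\}$: the multiplication map $H\times K\to G$ is a two-to-one covering, and indeed $U(2)\not\cong S^1\times SU(2)$, since the former has connected centre and the latter does not. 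With $H,K$ the specific subgroups $\exp(Z(\g)),\exp(\s)$ named in the statement, ``direct product'' is therefore not literally attainable. This gap is not peculiar to your write-up: the paper's own proof passes from ``$z+s\mapsto e^se^z$ is a local diffeomorphism near $1$ and $G$ is connected'' to ``$G=H\times K$'', which only yields surjectivity of the multiplication map $H\times K\to G$, not injectivity. What both arguments actually establish is that $G=HK$ with $H\cap K$ a finite central subgroup, i.e.\ $G$ is the quotient of $H\times K$ by a finite central subgroup; the unconditional ``direct product'' claim would need $H$ to be replaced by a different abelian complement, in the spirit of Milnor's Lemma 7.5 discussed after Remark~\ref{remamil}, and even that fails for $U(2)$ if one insists on a compact factor with finite centre.
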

\begin{proof}
By the previous theorem, the results in  \cite[Chapter I, $\S$6.4]{bou1} and in $\S$6.8 loc. cit., we have that $\g$ is a reductive Lie algebra, and we obtain the assertions for the Lie algebra from the Levi-Malcev theorem. Now since $Z(\g)\oplus \s \ni z+s\mapsto e^se^z$ is a local diffeomorphism around $1$ and $G$ is connected, we have the direct product decomposition of $G=H\times K$. Since the Lie algebra of $K$ is compact, the group $K$ is compact with finite center by H. Weyl's theorem (\cite{bou3} Chapter IX, $\S$1.4).
\end{proof}

\begin{rem}[Abstract groups and orthogonal groups]\label{remamil} Assume $\g$ admits an $\Ad$-invariant Finsler norm. Pick a basis $\{e_i\}_{i=1,\dots,n}$ of $Z(\g)$ and define and inner product $\langle\cdot ,\cdot \rangle_{Z}$ there by declaring it an orthonormal basis. Then for $x_i=z_i+s_i\in \g=Z(\g)\oplus \s$ and $i=1,2$ we define
\[
\langle x_1,x_2\rangle_{\g}= \langle z_1,z_2\rangle_{Z}-\B(s_1,s_2)= \langle z_1,z_2\rangle_{Z}+\langle s_1,s_2\rangle_{\s},
\]
obtaining an inner product in $\g$. This inner product makes the direct sum into an orthogonal sum, and it is an $\Ad_G$-invariant inner product, from which a left-invariant Riemannian metric can be propagated to $G$. The induced distance in $G$, defined as the infima of the lengths of paths is bi-invarant. We note that
\begin{enumerate}
\item Milnor's results \cite{milcur} apply, in particular his Lemma 7.5: $G$ is isomorphic to the product of a compact group and an additive vector group. Moreover, when given $G$ the left-invariant Riemannian metric induced by any $\Ad_G$ invariant inner product, the geodesics are one-parameter groups (this follows from Milnor's formula for the covariant derivative $\nabla_x=\frac{1}{2}\ad x$, see page \cite[page 323]{milcur}) and the sectional curvature is always non-negative (Corollary 1.4, loc. cit.).
\item The adjoint representation of $\s$ is faithful, and by the negative definiteness of the Killing form we see that we can identify $\s$ with a Lie subalgebra of $\mathfrak{so}_n(\mathbb R)$, where $n=dim(\s)$, and $\mathfrak{so}_n(\mathbb R)$ in turn is a Lie subalgebra of $\su$.
\end{enumerate}
\end{rem}

\begin{rem}In Milnor \cite{milcur} Lemma 7.5, the decomposition $G=\mathbb R^m\times K'$ is obtained by looking at the universal covering $\pi:\widetilde{G}\to G$, which must be isomorphic to $\mathbb R^m\simeq Z(\g)$  and a compact group $K'$ with $Lie(K')=\s$. Then if $\Pi=\ker \pi$, and $pr_1:\widetilde{G}\to \mathbb R^m$ is the projection onto the abelian factor, he considers the linear span $V=span\{ pr_1(\Pi)\}$ and $V^{\perp}\subset \mathbb R^m$ its orthogonal there. It follows that $G\simeq    V^{\perp}\times \frac{(K'\times V)}{\Pi}= \mathbb R^n\times K''$, with $K''$ compact. It is fair to ask which of the decompositions (this one and the one we gave in the previous theorem) is more suitable for each case. 

Note for instance that if $G=S^1\times SO(n)$ which has $\g=\mathbb R\times \mathfrak{so}_n$, then the former describes better the situation  ($H=S^1, K=SO(n)$), while $\widetilde{G}=\mathbb R\times Spin(n)$,  $\Pi=\mathbb Z\times \mathbb Z_2$, $V^{\perp}=\{0\}$, therefore the later would give $K''=SO(n)\times S^1=G$ (and the abelian factor in Milnor's presentation is trivial i.e. $m=0$).
\end{rem}

\subsection{Distances and geodesics in the metric space setting}

By the theorem of Montgomery and Zippin about Hilbert's fifth problem (see \cite{montgomery}), a connected, locally compact, locally contractible topological group, if admits a Gleason distance, is in fact a Lie group. Exploiting this fact is the following application to a result by Berestovskii that we now apply to our situation in the following form:

\begin{teo}If $(G,\di)$ is a connected, locally compact, locally contractible topological group with a bi-invariant intrinsic distance, then $G=H\times K$ is the product of an abelian connected Lie group $H$ and a connected compact Lie group $K$ with discrete center. Moreover, the distance $\di$ is the left-invariant metric that comes from the $\Ad_G$-invariant Finsler norm
\begin{equation}\label{distfromdist}
|v|=\lim\limits_{t\to 0^+}\frac{\di(1,e^{tv})}{t},
\end{equation}
where $e=\exp$ is the exponential map of the Lie group $G$. The distance is reversible if and only if the Finsler norm is a norm.
\end{teo}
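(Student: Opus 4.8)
The plan is to reduce the statement to the smooth situation already treated in the previous subsection. First I would invoke the Montgomery--Zippin solution of Hilbert's fifth problem \cite{montgomery}: a connected, locally compact, locally contractible topological group is a finite-dimensional Lie group, so $G$ carries a Lie group structure with exponential map $\Exp=e$, and the \emph{intrinsic} hypothesis says the metric topology of $\di$ agrees with the manifold topology. Next I would apply Berestovskii's theorem on homogeneous spaces admitting an invariant intrinsic metric \cite{beres}: since $G$ acts transitively on itself by left translations, $\di$ is a left-invariant Carnot--Carath\'eodory--Finsler metric, encoded by a left-invariant bracket-generating distribution $\mathcal D\subseteq\g$ together with a (possibly non-reversible) norm $|\cdot|$ on $\mathcal D$. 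The decisive point is that bi-invariance forces $\mathcal D=\g$: right-invariance of $\di$ makes both $\mathcal D$ and $|\cdot|$ $\Ad_G$-invariant, and differentiating the inclusion $\Ad_{e^{tx}}w\in\mathcal D$ at $t=0$ gives $[\g,\mathcal D]\subseteq\mathcal D$, so $\mathcal D$ is an ideal; since the Lie subalgebra generated by an ideal coincides with the ideal itself, the bracket-generating property yields $\mathcal D=\g$. Hence $\di$ is precisely the left-invariant length metric of an $\Ad_G$-invariant Finsler norm $|\cdot|$ on $\g$, exactly the setting of the previous subsection.

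Granting this, the structural conclusion is immediate: by the previous subsection, the existence of an $\Ad_G$-invariant Finsler norm on $\g$ implies (via Theorem \ref{teonorm}) that $\g=Z(\g)\oplus\s$ with $\s$ a compact semisimple Lie algebra, and that $G=H\times K$, where $H=\exp(Z(\g))$ is a connected abelian Lie group and $K=\exp(\s)$ is a connected compact Lie group whose centre is finite, hence discrete.

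It remains to recover the Finsler norm via formula \eqref{distfromdist} and to characterise reversibility. For the first, I would use the local description of $\di$ near the identity (the Baker--Campbell--Hausdorff formula for the distance from \cite{lar19}, sharpened here as \eqref{dibch}): with $x=0$ it reads $\di(1,e^{tv})=|BCH(tv,0)|=|tv|=t\,|v|$ for all sufficiently small $t>0$. Thus $\di(1,e^{tv})/t$ is constantly $|v|$ for small $t$, the limit in \eqref{distfromdist} exists and equals $|v|$, and in particular the $\Ad_G$-invariant Finsler norm inducing $\di$ is unique. (Even without that input, the curve $s\mapsto e^{sv}$ on $[0,t]$ gives $\di(1,e^{tv})\le t|v|$, while the reverse asymptotic inequality follows from continuity of the Finsler metric on $TG$ at the identity.) For reversibility: if $|\cdot|$ is an actual (symmetric) norm, reversing a rectifiable path preserves its length, so $\di(g,h)=\di(h,g)$ for all $g,h$; conversely, if $\di$ is reversible, left-invariance gives $\di(1,e^{tv})=\di(e^{tv},1)=\di(1,e^{-tv})$, and dividing by $t>0$ and letting $t\to0^+$ yields $|v|=|-v|$ for every $v\in\g$, i.e.\ $|\cdot|$ is a norm.

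I expect the main obstacle to be the invocation of Berestovskii's theorem: one must verify that its hypotheses (a locally compact homogeneous manifold equipped with an intrinsic metric) are met at this level of generality, and then read off from its conclusion exactly a norm on a subspace $\mathcal D\subseteq\g$, so that the ideal / bracket-generation argument above can be run to force $\mathcal D=\g$. Once $\di$ is known to arise from an $\Ad_G$-invariant Finsler norm, every remaining assertion follows directly from the results of this section together with the local formula \eqref{dibch}.
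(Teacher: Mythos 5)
Your proposal follows essentially the same route as the paper: Montgomery--Zippin for the Lie group structure, Berestovskii's theorem to reduce the bi-invariant intrinsic distance to an $\Ad_G$-invariant Finsler structure, the structural theorem of the preceding subsection for the decomposition $G=H\times K$, and the local (BCH) description of the distance to read off formula \eqref{distfromdist}. Your explicit argument that bi-invariance forces Berestovskii's bracket-generating distribution $\mathcal D$ to equal $\g$ (it is $\Ad_G$-invariant, hence an ideal, hence coincides with the Lie algebra it generates) is a correct and worthwhile elaboration of a step the paper states only summarily via a direct citation of Berestovskii's Theorem 7.
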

\begin{proof}
Adapting Berestovskii's Theorem 7 in \cite{beres}, to the case of non-reversible metrics (this is straightforward), we obtain that if $G$ is locally compact, locally contractible, and admits a bi-invariant intrinsic distance, then $G$ is a Lie group with 
 an $\Ad$-invariant Finsler metric. Now the first assertion follows from our previous theorem. Regarding the formula for the Finsler metric, this was also proved in \cite{beres}, Lemma 9 (taking into account the possible non-reversibility of the distance, the limit must be taken for positive $t$). 
\end{proof}

\begin{coro}\label{corodimenor}Let $(G,\di)$ be in the previous theorem. Let $B$ be an open ball around $0\in \g$ in the Finsler norm \eqref{distfromdist} such that $\exp|_B$ is a diffeomorphism onto its image $V\ni 1$. Then $\di(h,gh)=|\exp^{-1}h|$ for any $h\in V$ and any $g\in G$. In particular the path $t\mapsto ge^{tv}$ is minimizing for $t\in [0,1]$ as long as $v\in B$, and
\[
\di(e^{ty},e^{tx})=\di(1,e^{ty}e^{-tx})=\di(1,e^{B(ty,-tx)})=|B(ty,-tx)|
\]
for any $x,y$ and sufficiently small $t$ (here $B(v,w)=v+w+\frac{1}{2}[v,w]+\cdots$ is the Baker-Campbell-Hausdorff formula). Moreover
\begin{equation}\label{divsnorm}
\di(e^v,e^w)\le |v-w|\qquad \forall \, v,w\in \g.
\end{equation}
and if $[v,w]=0$ and $w-v\in B$, then we obtain an equality. If the norm is strictly convex and equality holds, it must be $[v,w]=0$.
\end{coro}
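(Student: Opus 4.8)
The plan is to reduce everything to a single fact — that $\exp$ is a radial isometry near the identity — and then to exploit the explicit smooth path $t\mapsto\exp\bigl(v+t(w-v)\bigr)$ joining $e^v$ to $e^w$. First I would dispose of the assertions about $\di$ near $1$. By bi-invariance the identity $\di(g,gh)=\di(1,h)$ reduces the first claim to proving $\di(1,e^u)=|u|$ for $u\in B$. The bound $\le$ is immediate: the curve $t\mapsto e^{tu}$, $t\in[0,1]$, is smooth with constant left trivialised velocity $L_{e^{tu}}^{-1}\tfrac{d}{dt}e^{tu}=u$, hence of length $|u|$. The reverse bound $\di(1,e^u)\ge|u|$ for $u$ in a small enough ball is precisely the local distance formula established in \cite{lar19} — it applies here because, by the previous theorem, $\di$ is the length metric of the $\Ad$-invariant Finsler norm \eqref{distfromdist} (equivalently, one-parameter subgroups are locally minimising). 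Granting this, $t\mapsto ge^{tv}$ has length $|v|=\di(g,ge^v)$ on $[0,1]$ for $v\in B$, so it is minimising, and the displayed $\mathrm{BCH}$ formula follows by moving one endpoint to $1$ with the invariance of $\di$ and using $e^{ta}e^{-tb}=\exp B(ta,-tb)$ for small $t$.

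Next I would prove the global bound. For arbitrary $v,w\in\g$ the smooth path $\gamma(t)=\exp(u(t))$, $u(t)=v+t(w-v)$, $t\in[0,1]$, joins $e^v$ to $e^w$, and by the standard formula for $d\exp$ its left trivialised velocity is
\[
\tfrac{1-e^{-\ad u(t)}}{\ad u(t)}(w-v)=\int_0^1 e^{-s\,\ad u(t)}(w-v)\,ds .
\]
Since the Finsler norm is $\Ad$-invariant, each $e^{-s\,\ad u(t)}=\Ad_{e^{-su(t)}}$ is a linear isometry of $(\g,|\cdot|)$, so by subadditivity this velocity has norm $\le\int_0^1|w-v|\,ds=|w-v|$; hence $\di(e^v,e^w)\le\LU(\gamma)\le|w-v|$. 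If moreover $[v,w]=0$ and $w-v\in B$ then $e^{-v}e^w=e^{w-v}$, so left-invariance and the first part give $\di(e^v,e^w)=\di(1,e^{w-v})=|w-v|$, an equality.

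Finally, for the converse under strict convexity, suppose $w\ne v$ and $\di(e^v,e^w)=|w-v|$; the sandwich above forces $\LU(\gamma)=|w-v|$, so the continuous non-negative function $t\mapsto|w-v|-\bigl|\tfrac{1-e^{-\ad u(t)}}{\ad u(t)}(w-v)\bigr|$ has zero integral and vanishes identically; at $t=0$ this says $\bigl|\int_0^1 e^{-s\,\ad v}(w-v)\,ds\bigr|=|w-v|$. All the vectors $e^{-s\,\ad v}(w-v)$ have norm $|w-v|$, and so does their average $\bar z$; picking $\varphi\in\g^{*}$ with $\|\varphi\|=1$ and $\varphi(\bar z)=|\bar z|$ we get $|w-v|=\varphi(\bar z)=\int_0^1\varphi\bigl(e^{-s\,\ad v}(w-v)\bigr)\,ds$ with integrand $\le|w-v|$, whence (by continuity in $s$) $\varphi\bigl(e^{-s\,\ad v}(w-v)\bigr)=|w-v|$ for every $s$. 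Thus $s\mapsto e^{-s\,\ad v}(w-v)$ stays in the face $\{z:|z|=|w-v|,\ \varphi(z)=|w-v|\}$, a single point by strict convexity, and since it is $w-v$ at $s=0$ it is constantly $w-v$; differentiating at $s=0$ gives $[v,w-v]=0$, i.e.\ $[v,w]=0$.

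The hard part is the reverse inequality $\di(1,e^u)\ge|u|$ invoked in the first step — the Finsler version of the Gauss lemma that locally identifies $\exp$ with a radial isometry — which I would import from \cite{lar19} rather than reprove. Everything else reduces to the elementary observation about the curve $\exp(v+t(w-v))$ together with a routine barycentre argument; the only delicate point is running the strict-convexity step correctly for a possibly non-reversible norm, where one uses that a strictly convex (not necessarily symmetric) unit ball has singleton faces, so no boundary point is a nontrivial average, integral averages included.
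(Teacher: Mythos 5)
Your proof is correct, and it takes a more explicit route than the paper, which simply delegates all the assertions to two results of \cite{lar19} (Corollary 4.12 for the local distance formula and minimizing segments, Theorem 4.17 for the inequality and its equality cases). What you give instead is essentially a self-contained re-derivation of the content of \cite{lar19}, Theorem 4.17: you reprove $\di(e^v,e^w)\le|v-w|$ from scratch using the curve $\exp(v+t(w-v))$, the formula $d\exp_u = L_{e^u}\circ\int_0^1\Ad_{e^{-su}}\,ds$, and the convexity of the (possibly asymmetric) Finsler norm to pass the norm under the integral; you then extract the strictly convex case by a barycentre argument, correctly reducing to the orbit $s\mapsto\Ad_{e^{-sv}}(w-v)$ lying on a single exposed face, which must be a singleton. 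Both approaches ultimately import the hard direction $\di(1,e^u)\ge|u|$ from \cite{lar19} (the Finsler analogue of the Gauss lemma), so they are not independent; but your version makes the mechanism visible — in particular it shows the inequality \eqref{divsnorm} and the commutation characterisation are elementary consequences of $\Ad$-invariance and Jensen's inequality, which is instructive in a way the bare citation is not. One small remark: the first displayed identity in the statement reads $\di(h,gh)=|\exp^{-1}h|$, which as written cannot be right (take $g=1$); your implicit reading $\di(g,gh)=\di(1,h)=|\exp^{-1}h|$ is the intended one and is the one consistent with \eqref{distfromdist}.
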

\begin{proof}
The assertions about distance and minimizing paths follow from the previous theorem and \cite[Corollary 4.12]{lar19}. The assertions about the inequality are proved in Theorem 4.17, loc. cit.
\end{proof}

\begin{defi}[Segments] Such paths $\delta(t)=ge^{tv}$ are called \emph{segments}, by the previous theorem 
\[
\di(\delta(s),\delta(t))=\di(ge^{sv},e^{tv})=\di(1,e^{(t-s)v})=(t-s)|v|
\]
as long as $0\le t-s\le 1$ (the first inequality is due to the possible non-reversibility of the distance). If the distance is reversible then this holds for $-1\le t-s\le 1$. Length minimizing paths are occasionally referred as \emph{short paths} or also \emph{metric geodesics}.
\end{defi}

So regarding geodesics and distance, the general situation is very much like in the case of a Riemannian left-invariant metric (Remark \ref{remamil}.1).

\begin{problem}There exists a neighbourhood $B$ of $0\in\g$ such that if $\Gamma:[a,b]\to (\g,|\cdot|)$ is a short path joining $0,x\in B$, then $\gamma=e^{\Gamma}$ is a short path in $(G,\di)$ joining $1,e^x$. Moreover for every $\varphi$ norming $x$ we have $\gamma_t^{-1}\gamma_t'\subset F_{\varphi}$ (this was proved in a more general setting in \cite{lar19} Section 4). Is there a short path $\gamma=e^{\Gamma}$ in $G$, such that $\Gamma$ is \emph{not} short in  $\g$?
\end{problem}

\begin{teo}\label{distanciavsmodulo} Let $x,y\in B\subset\g$ where $\exp$ is diffeomorphism, then 
\[
\di(e^x,e^y)\ge \frac{2}{\pi}|y-x|.
\]
\end{teo}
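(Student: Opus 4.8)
The plan is to reduce the statement to a two-dimensional (or abelian-plus-commutator) computation and then exploit the already-established local identity $\di(e^x,e^y)=|B(ty,-tx)|$ together with convexity of the norm. First I would use the splitting $\g = Z(\g)\oplus\s$ and the fact that the distance is bi-invariant: writing $x=z_1+s_1$, $y=z_2+s_2$ we have $\di(e^x,e^y)=\di(e^{z_1}e^{s_1},e^{z_2}e^{s_2})$, and since the abelian factor $H$ and compact factor $K$ commute and the distance on the product is (by uniqueness of the $\Ad$-invariant Finsler norm and the product structure) controlled by the norm on $\g$, the problem essentially localizes to the compact semisimple factor $\s\subset\mathfrak{so}_n(\mathbb R)\subset\su$; on the abelian part the inequality is trivial (equality, in fact, since $\di(e^{z_1},e^{z_2})=|z_2-z_1|$ locally). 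So I would assume $x,y\in\s$.

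Second, I would invoke Corollary~\ref{corodimenor}: for $x,y$ in the ball $B$ where $\exp$ is a diffeomorphism, $\di(e^x,e^y)=|B(y,-x)|$ where $B(y,-x)=y-x-\tfrac12[y,x]+\cdots$ is the BCH series. Thus the claim becomes the purely Lie-algebraic/normed inequality $|B(y,-x)|\ge \tfrac{2}{\pi}|y-x|$. The natural route is to observe that $B(y,-x)$ is, up to the sign issue, $\log(e^y e^{-x})$, and to realize the relevant element as a value of a one-parameter family. More precisely, set $u=y-x$; one wants to compare the norm of $\log(e^ye^{-x})$ with $|u|$. Here I would use the classical trick underlying the constant $2/\pi$: the element $e^y e^{-x}$ can be connected to the identity by the path $\gamma(t)=e^{ty}e^{-tx}$, whose logarithmic derivative at time $t$ is $\gamma(t)^{-1}\gamma'(t)=\Ad_{e^{tx}}(y)-x$, and its norm at $t=0$ is $|u|$. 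The factor $2/\pi$ arises exactly as in the spectral estimate $\|\sin(tA)/A\|\ge (2/\pi)$-type bounds: since $\sigma(\ad w)\subset i\mathbb R$ for every $w\in\g$ (Theorem~\ref{teonorm}(1)), the map $\Ad_{e^{tx}}$ behaves like a rotation, and averaging/estimating along the path gives the lower bound. I expect the cleanest implementation is: write $u = \int_0^1 \tfrac{d}{dt}\big(\Ad_{e^{tx}}^{-1}(\text{something})\big)\,dt$ or, more directly, use that $B(y,-x)$ and $u$ are related by an operator of the form $\frac{\ad w}{1-e^{-\ad w}}$ (the derivative of $\exp$), whose real spectrum, thanks to $\sigma(\ad w)\subset i\mathbb R$, consists of values $\frac{i\theta}{1-e^{-i\theta}}$ with modulus $\ge 2/\pi$ on the relevant range; then $\Ad$-invariance of the Taylor norm and Hermitian functional calculus (as set up in Definition~\ref{numr} and Theorem~\ref{teonorm}) promote this spectral bound to a norm bound $|B(y,-x)|\ge \tfrac{2}{\pi}|y-x|$.

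The main obstacle will be making the last step rigorous without smoothness of the norm: one cannot simply diagonalize $\ad w$ and apply a scalar multiplier bound, because the Finsler norm need not be unitarily invariant in any Hilbert-space sense — only $\Ad_G$-invariant. The device to get around this is Remark~\ref{taylor} and Theorem~\ref{teonorm}: the operator $i\,\ad w$ is Hermitian for the Taylor norm $|\cdot|_T$ on $\g^{\mathbb C}$, so $\rho(A)=\|A\|$ for such operators and, more importantly, holomorphic functional calculus respects the operator norm on the subalgebra generated by a fixed Hermitian operator; hence $f(\ad w)$ has norm controlled by $\sup\{|f(\lambda)|:\lambda\in\sigma(\ad w)\}\subset\sup\{|f(i\theta)|\}$ for $f$ holomorphic near the spectrum. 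Applying this to $f(\zeta)=\frac{\zeta}{1-e^{-\zeta}}$ (resp. its inverse) and carefully tracking that the inverse bound is what is needed — i.e. $|u| = |f(\ad w)^{-1}B(y,-x)|$ with a potential $w$ depending on $x,y$ — while controlling the range of $\theta$ so that $|f(i\theta)|^{-1}\ge 2/\pi$ holds (this forces the restriction $x,y\in B$, i.e. the relevant rotation angles stay in $[-\pi,\pi]$), is the delicate part. An alternative, possibly cleaner, obstacle-avoiding approach: bound $\di(e^x,e^y)$ below by length of an arbitrary competitor path using $\di(g h,g k)=\di(h,k)$ to reduce to paths through the identity, and then use the sharp scalar inequality $|e^{i\theta}-1|\le \theta$ only up to the factor $2/\pi$ on $|\theta|\le\pi$ — whichever formulation keeps the constant $2/\pi$ honest is the crux.
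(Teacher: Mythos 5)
Your ``alternative, obstacle-avoiding approach'' at the end is in fact the paper's route and it is the one that works; the main route you outline has two genuine gaps. First, there is no single operator of the form $F(\ad w)^{-1}=\frac{\ad w}{1-e^{-\ad w}}$ carrying $y-x$ to $B(y,-x)$: the BCH logarithm is not a linear image of $y-x$ under a fixed operator, and $F(\ad\Gamma(t))^{-1}$ only relates \emph{derivatives} along a path, i.e.\ $\Gamma'(t)=F(\ad\Gamma(t))^{-1}(\gamma^{-1}\gamma')(t)$, not the endpoints themselves. Second, your appeal to ``holomorphic functional calculus respects the operator norm'' for a Hermitian operator on a Banach space is not valid as stated: Sinclair's theorem gives $\|A\|=\rho(A)$ when $A$ itself is Hermitian, but $g(A)$ need not be Hermitian, so $\|g(A)\|$ need not equal $\sup_{\sigma(A)}|g|$. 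The paper avoids both issues by working with the differential of the chart $\phi=\exp^{-1}$: from $D\exp_v=L_{e^v}F(\ad v)$ one deduces $D\phi_{e^v}(L_{e^v}w)=F(\ad v)^{-1}w$, then bounds $|F(\ad v)^{-1}w|\le f(\|\ad v\|)\,|w|$ for $f(t)=\frac{t/2}{\sin(t/2)}$ by a power-series estimate (cited from~\cite{alr}, not by functional calculus), and since $f(t)\le\pi/2$ for $t\le\pi$ this makes $\phi$ a $\frac{\pi}{2}$-Lipschitz map into $(\g,|\cdot|)$, whence $|y-x|\le\frac{\pi}{2}\,\di(e^x,e^y)$ by the usual comparison of path lengths.

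Two lesser remarks. The reduction to the compact factor $\s$ in your first paragraph is unnecessary --- $\ad$ vanishes on $Z(\g)$, so the argument is uniform on $\g$. And the identity $\di(e^x,e^y)=|B(y,-x)|$ does not automatically hold for all $x,y\in B$; Corollary~\ref{corodimenor} yields it only when $e^ye^{-x}$ stays in $\exp(B)$, which requires further shrinking $B$ (the paper's proof never invokes BCH for this theorem and so sidesteps this). What you do get right, and what is the genuine content, is the scalar inequality $|1-e^{i\theta}|\ge\frac{2}{\pi}|\theta|$ on $|\theta|\le\pi$, together with $\sigma(\ad v)\subset i\mathbb R$ and the Hermitian-operator set-up from Remark~\ref{taylor}, Definition~\ref{numr} and Theorem~\ref{teonorm}; these are precisely the paper's ingredients, just packaged through $D(\exp^{-1})$ rather than through the BCH series.
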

\begin{proof}
Let $B\subset \g$ be an open ball such that $\exp|_B:B\to V=\exp(V)$ is a diffeomorphism, and recall the formula for the differential of the exponential map in Lie groups:
\[
D\exp_{v}(w)=L_{e^v}\int_0^1 \Ad_{e^{-sv}}w\,ds = L_{e^v} \int_0^1 e^{-s\, \ad v}w\,ds=L_{e^v} F(\ad v)w
\]
where $F$ is the holomorphic map $F(\lambda)=\frac{1-e^{-\lambda}}{\lambda}$. Recall that the spectrum of $\ad v$ is inside the interval $i(-\|\ad v\|,\|\ad v\|)$ (Definition \ref{numr} and Theorem \ref{teonorm}). Then it must be $\|\ad v\|<\pi$ for all $v\in B$ otherwise we could find $v_0=t_0v$ such that $0\in \sigma(F(\ad v))=F(\sigma(\ad v)$, contradicting the invertibility of $D\exp_v$. Shrink a bit the ball considering $v\in rB$, with $0<r<1$, then $\|\ad v\|\le M(r)<\pi$ for all $v\in rB$. Since $|\ad v(x)|\le \|\ad v\||w|$, and with the same proof as in \cite[Proposition 4.6]{alr} we obtain
\[
|F(\ad v)^{-1}w|\le f(\|\ad v\|)|w|<f(M(r)) |w|
\]
for $f(t)=\frac{t/2}{\sin(t/2)}$. Let $\phi:\exp(rB)\to rB$ be the inverse of the exponential map, from $e^{\phi(g)}=g$ for $g\in \exp(rB)$, differentiating we obtain
\[
D\exp_{\phi(g)}D\phi_g\dot{g}=\dot{g}.
\]
Thus if $g=e^v$, and $\dot{g}=L_gw\in T_gG$, we have 
\[
L_{e^v} F(\ad v) D\phi_{e^v} (\dot{g})=D\exp_v D\phi_{e^v}(\dot{g})=\dot{g}=L_{e^v}w,
\]
hence $D\phi_{e^v}(\dot{g})=F(\ad v)^{-1}w$. Then
\[
|D\phi_g(\dot{g})|=|F(\ad v)^{-1}w|\le f(M(r))||w|=f(M(r)) |\dot{g}|_g\le \frac{2}{\pi}|\dot{g}|_g
\]
If $x,y\in B$, we have $x,y\in rB$ for some $r<1$, and from the previous inequality for the exponential chart it can be proved that
\[
|y-x|=|\phi(e^x)-\phi(e^y)|\le\frac{2}{\pi} \di(e^x,e^y)
\]
(see \cite[Sec 9.1.2]{largeo} or the proof of \cite[Prop. 12.22]{upmeier}). 
\end{proof}

We finish this section with a small lemma that will be really useful later in our calculus of curvatures.

\begin{rem}\label{bch}
The first terms of the Baker-Campbell-Hausdorff series are
$$
\exp^{-1}(e^{tx}e^{ty})=B(tx,ty)=t(x+y)+\frac{t^2}{2}[x,y]+\frac{t^3}{12}\left( [x,[x,y]]+[y,[y,x]]\right)+O(t^4).
$$
Then for small $r$ and any $t\in [0,1]$ we can write:
$$
\di(e^{rx},e^{ry})=\di(1,e^{(t-1)rx}e^{ry}e^{-rt x})=\di(1,e^We^{-trx})=\di(1,e^{Z_r})=|Z_r|,$$
where 
\begin{equation}\label{dobleve}
W=B((t-1)rx,ry)\quad\textrm{ and }\quad Z_r=B(W,-rt x).
\end{equation}
For given $t\in\mathbb R$, these expressions are well-defined for sufficiently small values of $r\in \mathbb R$.
\end{rem}

\begin{lem}\label{lemaB}With the notation of the last remark, for small $r$ we have:
\[
\di(e^{rx},e^{ry})=|r(y-x)+\frac{r^2}{2}(2t-1)[x,y]+\frac{r^3}{12}(6t^2-6t+1)[x,[x,y]]+\frac{r^3}{12}[y,[x,y]]+ O(r^4)|.
\]
In particular, for $t=1/2$ we obtain
\begin{equation}\label{dibch}
\di(e^{rx},e^{ry})=|r(y-x)+\frac{1}{48}r^3[x+y,[x,y]]+\frac{1}{16}r^3[y-x,[x,y]]+O(r^4)|.
\end{equation}
\end{lem}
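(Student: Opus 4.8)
The plan is to substitute the third-order Baker--Campbell--Hausdorff expansion of Remark~\ref{bch} into the identity $\di(e^{rx},e^{ry})=|Z_r|$, where $Z_r=B(W,-rtx)$ and $W=B((t-1)rx,ry)$, and to carry every computation modulo $O(r^4)$, tracking homogeneity in $r$.

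\textbf{Step 1 (expand $W$).} Apply the formula of Remark~\ref{bch} with $a=(t-1)rx$, $b=ry$; using $[b,[b,a]]=(t-1)r^3[y,[y,x]]$ one gets
\[
W=(t-1)rx+ry+\tfrac12(t-1)r^2[x,y]+\tfrac1{12}(t-1)^2r^3[x,[x,y]]+\tfrac1{12}(t-1)r^3[y,[y,x]]+O(r^4),
\]
and in particular $W=O(r)$. \textbf{Step 2 (expand $Z_r=B(W,-rtx)$).} Since $W$ and $-rtx$ are both $O(r)$, in the series $Z_r=W-rtx+\tfrac12[W,-rtx]+\tfrac1{12}\big([W,[W,-rtx]]+[-rtx,[-rtx,W]]\big)+O(r^4)$ the two double brackets only require $W$ to leading order $(t-1)rx+ry$, whereas $[W,-rtx]$ must keep $W$ up to order $r^2$; a short bracket computation (using $-[[x,y],x]=[x,[x,y]]$) gives
\[
[W,-rtx]=tr^2[x,y]+\tfrac12 t(t-1)r^3[x,[x,y]]+O(r^4),
\]
\[
[W,[W,-rtx]]=t(t-1)r^3[x,[x,y]]+tr^3[y,[x,y]]+O(r^4),\qquad [-rtx,[-rtx,W]]=t^2r^3[x,[x,y]]+O(r^4).
\]

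\textbf{Step 3 (collect terms).} The order-$r$ part of $Z_r$ is $r(y-x)$ after the cancellation $(t-1)rx-rtx=-rx$. The order-$r^2$ coefficient of $[x,y]$ is $\tfrac12(t-1)+\tfrac12 t=\tfrac{2t-1}{2}$. The order-$r^3$ coefficient of $[x,[x,y]]$ collects four contributions (from $W$, from $\tfrac12[W,-rtx]$, and from the two double brackets):
\[
\tfrac1{12}(t-1)^2+\tfrac14 t(t-1)+\tfrac1{12}t(t-1)+\tfrac1{12}t^2=\tfrac1{12}\big((t-1)^2+4t(t-1)+t^2\big)=\tfrac1{12}(6t^2-6t+1),
\]
while the coefficient of $[y,[x,y]]$ is $\tfrac1{12}\big(-(t-1)+t\big)=\tfrac1{12}$, using $[y,[y,x]]=-[y,[x,y]]$. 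Taking $|Z_r|$ gives the first displayed formula. \textbf{Step 4 (the case $t=1/2$).} Here $2t-1=0$ and $6t^2-6t+1=-\tfrac12$, so $Z_r=r(y-x)-\tfrac1{24}r^3[x,[x,y]]+\tfrac1{12}r^3[y,[x,y]]+O(r^4)$; checking the two scalar coefficients shows $-\tfrac1{24}[x,[x,y]]+\tfrac1{12}[y,[x,y]]=\tfrac1{48}[x+y,[x,y]]+\tfrac1{16}[y-x,[x,y]]$, which is \eqref{dibch}.

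The only real difficulty is bookkeeping: one must not discard prematurely the order-$r^2$ term of $W$, since after a single bracket with $-rtx$ it still feeds the order-$r^3$ part of $Z_r$; everything else is forced by BCH. A convenient sanity check is $t=0$ or $t=1$, where $W$ degenerates and $Z_r$ reduces to (a translate of) the bare series $B(ry,-rx)$, so the formula must match the raw BCH coefficients there.
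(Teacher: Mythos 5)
Your proof is correct and follows the same route as the paper: substitute the third-order BCH expansion into $Z_r = B(W,-rtx)$ with $W = B((t-1)rx,ry)$, carefully retaining the order-$r^2$ part of $W$ before bracketing, and then collect the coefficients of $[x,y]$, $[x,[x,y]]$ and $[y,[x,y]]$ term by term. The only cosmetic difference is that you expand the third-order BCH term as $\frac{1}{12}\big([W,[W,-rtx]]+[-rtx,[-rtx,W]]\big)$ while the paper uses the equivalent combined form $\frac{1}{12}[A-B,[A,B]]$, and both yield $\frac{1}{12}(6t^2-6t+1)$ and $\frac{1}{12}$ for the two cubic coefficients.
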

\begin{proof}
Let's start by computing an expression for $W$ in (\ref{dobleve}):
\begin{align*}
W&=BCH((t-1)rx,ry)\\
&=(t-1)rx+ry+\frac{1}{2}[(t-1)rx,ry]+\frac{1}{12}[(t-1)rx-ry,[(t-1)rx,ry]]+O(r^4)\\
&=r\left((t-1)x+y\right)+\frac{r^2}{2}(t-1)[x,y]+\frac{r^3}{12}(t-1)^2[x,[x,y]]-\frac{r^3}{12}(t-1)[y,[x,y]]+O(r^4).
\end{align*}
Now, we write
$$B(W,-rtx)=A+B+\frac{1}{2}[A,B]+\frac{1}{12}[A-B,[A,B]]+O(r^4).
$$
We compute each term separately, starting with $A+B$
\begin{align*}
A+B= r(y-x)+\frac{r^2}{2}(t-1)[x,y]+\frac{r^3}{12}(t-1)^2[x,[x,y]]-\frac{r^3}{12}(t-1)[y,[x,y]]+O(r^4).
\end{align*} 
To compute $[A,B]$, we write:
\begin{align*}
[A,B]&= [r((t-1)x+y)+\frac{r^2}{2}(t-1)[x,y],-rtx]+O(r^4)\\
&=-r^2[(t-1)x+y,tx]-\frac{r^3}{2}(t-1)[[x,y],-rtx]+O(r^4)\\
&=tr^2[x,y]+\frac{r^3}{2}(t^2-t)[x,[x,y]]+O(r^4).
\end{align*}
To compute $[A-B,[A,B]]$, we write
\begin{align*}
[A-B,[A,B]]&=[rtx-rx+ry+rtx,  tr^2[x,y]+ O(r^3)  ]+O(r^4)\\
&=r^3[2tx-x+y,t [x,y]]+O(r^4)\\
&=tr^3[2tx-x+y,[x,y]]+ O(r^4)\\
&=(2t^2-t)r^3[x,[x,y]]+tr^3[y,[x,y]]+O(r^4).
\end{align*}
Finally, we can compute 
\begin{align*}
B(W,-rtx)&= r(y-x)+\frac{r^2}{2}(2t-1)[x,y]+\alpha r^3[x,[x,y]]+\beta r^3 [y,[x,y]],
\end{align*}
where
$$
\alpha=\frac{1}{12}(t-1)^2+\frac{1}{4}(t^2-t)+\frac{1}{12}(2t^2-t)=\frac{1}{12}(6t^2-6t+1),
$$
and
\[
\beta=-\frac{1}{12}(t-1)+\frac{1}{12}t=\frac{1}{12}.
\]
In conclusion we have 
\[
Z_r=r(y-x)+\frac{r^2}{2}(2t-1)[x,y]+(\frac{t^2}{2}-\frac{t}{2}+\frac{1}{12})r^3[x,[x,y]]+(-\frac{t^2}{12}+\frac{t}{4}-\frac{1}{12})r^3[y,[x,y]].\qedhere
\]
\end{proof}

\section{Curvature}

Our notions and definitions are motivated by the following remark by J. Milnor in his \textit{Morse Theore} lecture notes \cite[ p.101]{milnorm}: \emph{consider an observer at $p$ looking in the direction of the unit vector $U$ towards a point $q=\exp_p(rU)$. A small line segment at $q$ with length $L$, pointed in a direction corresponding to the unit vector $V\in T_pM$, would appear to the observer as the line segment of length}
$$
L(1+\frac{s^2}{6}\langle R(U,V)U,V\rangle+  o(s^3)),
$$
more precisely: 
$$\langle R_p(x,y)y,x \rangle_p=6 \|y-x\|_p^2\lim_{r\to 0^+}\frac{r||y-x||_p-d(\exp_p(rx),\exp_p(ry))}{r^2d(\exp_p(rx),\exp_p(ry))},  $$
where $R$ is the curvature tensor of the Riemannian metric considered. This approach was taken in \cite{cocoeste} and \cite{cl}, though it was in the setting of Hermitian matrices, the tangent space to the manifold of positive invertible matrices. We will now prove some additional properties about $\Ad$-invariant Finsler norms, their geometry and norming functionals. We will postpone the presentation of the actual metric curvature to Section \ref{sec:s}.

\subsection{Faces of the sphere, smoothness and convexity}

The notion of norming functional will be key to many descriptions of the geometry of $G$ and $\g$. Let $|\cdot|$ be a Finsler norm in $\g$.

\begin{defi}[Norming functionals]\label{dualnorm} For $\varphi\in \g'$ (the dual space of $\g$) consider 
\[
\|\varphi\|=\max\{\varphi(x): |x|\le 1\}.
\]
This defines a Finsler norm in the dual space. We say that $\varphi\in \g'$ \emph{norms} $v\in \g$ if $\varphi$ has unit norm and $\varphi(x)=|x|$.  For each $\lambda\in\mathbb R_{>0}$, if $\varphi(V)=|V|$ and $\|\varphi\|=1$ we have $\varphi(\lambda V)=\lambda |V|=|\lambda V|$ thus $\varphi$ norms the whole ray $\lambda V$, $\lambda >0$. In particular $N_{\lambda v}=N_v$ for any $\lambda>0$.
\end{defi}

Now we recall the notions of extreme points related to strict convexity of a sphere of the norm, and the notion of face of the sphere.

\begin{defi}[Extreme points]\label{strictc}
Let $\overline{B_1}$ be the closed unit ball of the norm. Being a compact convex set it is by the Krein-Milman theorem the convex hull of its extreme points. For $0\ne x\in \g$, we say that $x$ is \textit{extreme} if $x/|x|$ is an extreme point of $\overline{B_1}$, equivalently we say that the norm is \emph{strictly convex} in $x$.
\begin{enumerate}
\item The norm is strictly convex if and only if all the non-zero vectors are extremal. Equivalently, for any $x\ne 0 $ and any $\varphi$ norming $x$, it must be $F_{\varphi}=\{x\}$.
\item A norm is strictly convex at $x\ne 0$ if and only if there exists a norming functional $\varphi$ for $x$ such that $F_{\varphi}=\{x\}$.
\end{enumerate}
\end{defi}

\begin{defi}[Faces]\label{caras} A \textit{face} $S\subset \overline{B_r}$ of the normed space $(\g,|\cdot|)$ is a set such that every open segment $(x:y)\subset \overline{B_r}$ that meets $S$ is contained in $S$. In other words, they are the extremal subsets of the closed ball, and in particular the extremal points are the singleton faces.

An \textit{exposed face} $F$ of the ball $B_r$  is the intersection of the closed ball $\overline{B_r}$ with the hyperplane determined by a unit norm functional $\varphi\in \g', \|\varphi\|=1$, i.e.
$$
F_{\varphi}(r)=\overline{B_r}\cap\{x\in\g:\varphi(x)=r\}.
$$
Any exposed face is a face, but not the other way around. We will usually omit the number $r$ and $F_{\varphi}$ will refer to the face containing a certain vector $v$, thus $r=|v|$.

The \textit{cone generated} by a exposed face $F_{\varphi}$ is $C_{\varphi}=\mathbb R_+ F_{\varphi}$. By the observation before these definitions, this cone consists exactly of those $x\in \g$ such that $\varphi(x)=|x|$ for this given unit norm $\varphi$.
\end{defi}

\begin{rem}[Convexity and smoothness]\label{convesmooth} For any Finsler norm $|\cdot|$ in a finite dimensional vector space $X$ we have:
\begin{enumerate}
\item The norm $|\cdot|$ is G\^{a}teaux differentiable at $x\ne 0$ if and only if it is Fr\'echet differentiable  (this follows from \v{S}mulian Lemma, see \cite[Lemma 8.4]{fabian} for instance). Therefore we simply say that the norm is \textit{smooth} when this happens for any $x\ne 0$; in that case the norm function is in fact $C^1$ away from $x=0$ \cite[Corollary 8.5]{fabian}.
\item The norm is smooth if and only if the dual norm is strictly convex if and only if there is a unique functional norming each $x\ne 0$ \cite[Lemma 8.4 and Fact 8.12]{fabian}. Another proof of this last assertion can be derived from 
\begin{equation}\label{derlat}
\lim\limits_{t\to 0^-}\frac{|x+ty|-|x|}{t}=\min\limits_{\varphi\in N_x}\varphi(y)\le \max\limits_{\varphi\in N_x}\varphi(y)=\lim\limits_{t\to 0^+}\frac{|x+ty|-|x|}{t}
\end{equation}
which was proved in \cite{larey} (see Proposition 3.3 and Remark 3.4 there).  We also remark that the increment on the right is a non-decreasing function, in fact, it gets smaller when $t\to 0^+$.  
\end{enumerate}
\end{rem}

\begin{rem}[Chain rule for subdifferentials]\label{chainrule} If $b:(-\varepsilon,\varepsilon)\to (X,|\cdot|)$, and $b(0)\ne 0$ then 
\[
\lim\limits_{s\to 0^+}\frac{|b(s)|-|b(0)|}{s}= \max\limits_{\varphi\in N_{b(0)}}\varphi(b'(0)),
\]
provided $b(s)=b(0)+sb'(0)+o(s)$ with $o(s)/s\to 0$ for $s\to 0^+$. Indeed, it is easy to check that this limit, minus the last one in \eqref{derlat}, goes to zero. By the previous remark we obtain the claim.
\end{rem}

\subsection{Compact semi-simple algebras}

\begin{rem}\label{lasum}If $G$ admits a bi-invariant distance, we have already shown that $\g=Z(\g)\oplus \s$ with $\s$ a compact semi-simple Lie algebra. It is clear from Corollary \ref{corodimenor} that if either $x$ or $y\in Z(\g)$ then $\di(e^{rx},e^{ry})=|ry-rx|$ and in particular $S(x,y)=0$  (Definition \ref{defS}  below). We want to study the condition $S(x,y)=0$ in more depth, and this requires some machinery.
\end{rem}

\begin{defi} For $v=v_0+v_1, w=w_0+w_1\in\g=Z(\g)\oplus\s$, we indicate with 
\[
\langle v,w\rangle=\langle v_0,w_0\rangle_Z-\B(v_1,w_1)=\langle v_0,w_0\rangle_Z-\tr(\ad v_1\circ \ad w_1)=\langle v_0,w_0\rangle_Z+(\ad v_1|\ad w_1)
\]
an $\Ad_G$-invariant inner product in $\g$ as in Remark \ref{remamil}, where its restriction to $\s$ is the opposite of the Killing form and $\s$ is orthogonal to $Z(\g)$. We denote with 
\[
\|v\|_F=\sqrt{\|v_0\|_Z^2+(\ad v_1|\ad v_1)}=\sqrt{\langle v,v\rangle}
\]
the extended Frobenius norm induced by this inner product. From \eqref{killcyc} we see that
\begin{equation}\label{cyclic}
\langle [x,y],z\rangle= \langle x, [y,z]\rangle =- \langle y, [x,z]\rangle
\end{equation}
for any $x,y,z\in \g$, and in particular  $\ad x:\g\to \g$ is skew-adjoint for this inner product; $\ad x$  has $\s$ as an invariant subspace and $\ad x$  is non-trivial for non-zero $x\in \s$. 
\end{defi}

\begin{rem}[Real root decomposition]\label{rootdecom} We collect here some known facts of compact semi-simple Lie algebras $\s$. Let $\ah\subset  \s$ be a Cartan subalgebra, let $\Delta$ be the set of (real) roots of $\s$ with respect to this Cartan subalgebra, and denote $\Delta_+$ the positive roots, each root $\alpha$ represented by a nonzero vector $h_{\alpha}\in \ah$. There is a set of vectors in $\s$ (the \emph{real root vectors}) 
\[
\{u_\alpha, v_{\alpha}: \alpha \in\Delta_+\}
\]
orthonormal with respect to the Killing form, such that for each $h\in \ah$ 
\begin{align}\label{weights}
[h,u_\alpha] & =\alpha(h)v_\alpha \qquad [h,v_\alpha]=-\alpha(h)u_\alpha \qquad [u_\alpha,v_\alpha]= h_\alpha
\end{align}
where $\alpha(\cdot)=\langle h_\alpha,\cdot\rangle$. The $\{h_{\alpha}\}_{\alpha\in\Delta_+}$ span the Cartan subalgebra over the real numbers, and $\beta(h_{\alpha})\in\mathbb Z\, |\alpha|^2$ for any $\alpha,\beta\in \Delta$, where $|\alpha|=\|h_{\alpha}\|_F$. We have 
\[
\s=\ah\oplus_{\alpha\in \Delta_+} Z_\alpha=\ah\oplus_{\alpha\in \Delta_+} \mathbb R u_\alpha\oplus_{\alpha\in \Delta_+} \mathbb R v_\alpha
\]
with orthogonal direct sums. Moreover for each $h\in \ah$, we have
\begin{equation}\label{diago}
\ad h=\sum_{\alpha\in\Delta_+} \alpha(h)(v_{\alpha}\otimes u_{\alpha}-u_{\alpha}\otimes v_{\alpha})= \sum_{\alpha\in\Delta_+} \alpha(h) T_{\alpha}
\end{equation}
where we write $T_{\alpha}=(v_{\alpha}\otimes u_{\alpha}-u_{\alpha}\otimes v_{\alpha})$ for short and $(x\otimes y)z=\langle z,y\rangle x$. Note that $T_{\alpha}T_{\beta}=0$ when $\alpha\ne \beta$, and moreover
\[
T_{\alpha}^2=-(u_{\alpha}\otimes u_{\alpha}+v_{\alpha}\otimes v_{\alpha})=-P_{\alpha}
\]
and $P_{\alpha}$ is a 2-dimensional orthogonal projection in $(\s,\langle \cdot,\cdot\rangle)$. Therefore $\ad^2h=-\sum_{\alpha}\alpha(h)^2 P_{\alpha}$ and the eigenvalues of $\ad h$ are $\{\pm i\alpha(h): \alpha\in \Delta_+\}$. 
\end{rem}

For a full exposition with proofs of the facts quoted in the  previous remark, see Appendix B in \cite{malk} or Knapp's book \cite[Theorem 6.11]{knapp}, where we picked  $h_{\alpha}=iH_{\alpha}$, and
\[
u_{\alpha}=\frac{1}{\sqrt{2}}(X_{\alpha}-X_{-\alpha})\qquad v_{\alpha}=\frac{1}{\sqrt{2}}i(X_{\alpha}+X_{-\alpha})
\]
in the notation of the cited book.

\begin{rem}[Norming functionals]\label{normif} Let $|\cdot|$ be an $\Ad$-invariant Finsler norm $\g$, let $\varphi\in\g'$ be a norming functional for $v\in\g$. Then by Riesz representation theorem for linear forms there exists a unique $z\in\g$ such that $\varphi(x)=\langle z,x\rangle$ for any $x\in\g$. If $g\in G$ and  $\psi=\langle \Ad_g z,\cdot\rangle$, then we claim that $\|\psi\|=\|\varphi\|$ (Definition \ref{dualnorm}): since $G$ acts by orthogonal transformations and then
\[
\psi(w)=\langle \Ad_g z,w\rangle=\langle z, \Ad_g^{-1}w\rangle= \varphi(\Ad_{g^{-1}}w)\le \|\varphi\| |\Ad_{g^{-1}}w|=\|\varphi\| |w|
\]
by the $\Ad$-invariance of the norm in $\g$, and with a similar reasoning we obtain the reversed inequality. Thus 
\begin{equation}\label{sonigual}
\|\varphi\circ \Ad_g \|=\|\varphi\| \quad \forall g\in G.
\end{equation}
Moreover we have $\varphi([v,x])=0$ and $\varphi([x,[x,v]])\le 0$ for any $x\in\g$ by  Lemma \ref{desilie}. And from
\[
0=\varphi ([x,v])=\langle z,[x,v]\rangle= -\langle [z,v],x\rangle
\]
picking $x=[z,v]$ we see that $\|[z,v]\|_F=0$ thus $[z,v]=0$ whenever $\varphi=\langle z,\cdot\rangle$ norms $v$.  Write $v=v_0+v_1,z=z_0+z_1\in Z(\g)\oplus\s$; then it must be $[z_1,v_1]=0$ and we can take a Cartan subalgebra containing $v_1,z_1$ (see $\S2$.Proposition 10 in Bourbaki's \cite{bou3}), and fix $\Delta_+$ a positive root system in $\s$.
\end{rem}

\begin{rem}[Norming functionals and Cartan subalgebras]\label{perm}
Let $z,v\in \s$ with $[v,z]=0$, fix a Cartan subalgebra $\ah\subset \s$ containing them. In this case from \eqref{diago} we see that
\[
\langle z,v\rangle=(\ad z|\ad v)=-\tr(\ad z\circ \ad  v)=\sum_{\alpha} \alpha(z)\alpha(v) \tr(P_{\alpha})=2\sum_{\alpha} \alpha(z)\alpha(v).
\]
Let $w\in\s$ and write $w=w_0+\sum_{\alpha} a_{\alpha}u_{\alpha}+b_{\alpha} v_{\alpha}$ with $w_0\in \ah$. We have
\[
[z,w]= \sum_{\alpha\in \sop(z)} \alpha(z)( a_{\alpha} v_{\alpha}  - b_{\alpha} u_{\alpha}) \qquad [w,v]= \sum_{\alpha\in \sop(v)} \alpha(v)( -a_{\alpha} v_{\alpha}  + b_{\alpha} u_{\alpha}).
\]
Thus from the previous lemma we have that, if $\varphi$ norms $v$
\begin{equation}\label{menorigual}
-\sum_{\alpha\in \sop(v)\cap \sop(z)} \alpha(z)\alpha(v)(a_{\alpha}^2+b_{\alpha}^2)=\langle [z,w], [w,v]\rangle=\varphi([w,[w,v]])\le 0.
\end{equation}
Picking $w=u_{\alpha}$ it follows that when $\varphi=\langle z,\cdot\rangle$ norms $v$, then
\begin{equation}\label{moig}
\alpha(z)\alpha(v)\ge 0\qquad \textrm{ for all } \alpha\in \Delta_+.
\end{equation}
\end{rem}

\begin{defi}Let $\varphi=\langle z,\cdot\rangle \in\s^*$ be a norming functional for $v\in\s$ as in Lemma \ref{normif}, and let $\ah$ be a Cartan subalgebra containing  $v,z$. Let $\Delta_+$ be a set of positive roots and let
\[
\sop(v)=\{ \alpha\in \Delta_+: \alpha(v)\ne 0\},
\]
be the roots supporting $v$, and likewise with $z$. Note that 
\[
\sop(v)^c=\{\alpha: \alpha(v)=0\}=\{\alpha: [v,u_{\alpha}]=0=[v,v_{\alpha}]\}
\]
by \eqref{weights}, and also that $v$ is regular (Definition \ref{adapted}) iff $\sop(v)=\Delta_+$. We will also consider the subspace $S_v=\bigoplus_{\alpha\in \sop(v)} Z_{\alpha}$ and we will denote as $P_v:\g\to S_v$ the orthogonal projection onto $S_v$.
\end{defi}

\begin{lem} Let $|\cdot|$ be an $\Ad$-invariant Finsler norm $\g$, let $\varphi=\langle z,\cdot\rangle \in\s^*$ be a norming functional for $v\in\g$ as in Remark \ref{normif}, let $F_{\varphi}\subset \g$ be the face supported by $\varphi$. Write $v=v_0+v_1,z=z_0+z_1\in Z(\g)\oplus\s$, let $\ah$ be a Cartan subalgebra containing $v_1,z_1$, let $\alpha\in\Delta_+$. Then
\begin{enumerate}
\item If $\alpha\in \sop(v_1)^c\cap \sop(z_1)$ then there exists $\psi\ne \varphi$ also norming $v$. 
\item If $\alpha\in \sop(v_1)\cap \sop(z_1)^c$ then $F_{\varphi}$ is not a singleton.
\end{enumerate}
\end{lem}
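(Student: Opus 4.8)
The plan is to exploit the explicit real-root description of the adjoint action together with the characterization of the cone $C_\varphi$ as $\{x:\varphi(x)=|x|\}$ and the basic rigidity coming from Lemma~\ref{desilie} (namely $[z,v]=0$, hence $\alpha(z)\alpha(v)\ge 0$ for every $\alpha\in\Delta_+$, as recorded in Remarks~\ref{normif} and~\ref{perm}). Since $\varphi=\langle z,\cdot\rangle$ with $\|\varphi\|=1$ and $\varphi(v)=|v|$, and since $Z(\g)$ is $\varphi$-irrelevant (it lies in the kernel of every $\ad$-bracket), I will work inside $\s$ with the Cartan subalgebra $\ah\ni v_1,z_1$ fixed, and use that $\psi=\varphi\circ\Ad_g$ has the same dual norm~\eqref{sonigual} for every $g\in G$; the strategy for part~(1) is to produce a genuinely different norming functional by rotating $z$ by a one-parameter subgroup that fixes $v$, and the strategy for part~(2) is to produce a genuinely different point in $F_\varphi$ by the dual device, i.e.\ rotating $v$ by a one-parameter subgroup that fixes $z$.

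For part~(1): suppose $\alpha\in\sop(v_1)^c\cap\sop(z_1)$, i.e.\ $\alpha(v_1)=0$ but $\alpha(z_1)\ne 0$. Then $[v,u_\alpha]=[v,v_\alpha]=0$ by~\eqref{weights}, so for $t\in\mathbb R$ the element $g=\exp(tu_\alpha)$ satisfies $\Ad_g v=v$. Set $\psi=\varphi\circ\Ad_{g^{-1}}=\langle \Ad_g z,\cdot\rangle$. By~\eqref{sonigual} we have $\|\psi\|=1$, and $\psi(v)=\varphi(\Ad_{g^{-1}}v)=\varphi(v)=|v|$, so $\psi$ norms $v$. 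It remains to check $\psi\ne\varphi$ for small $t\ne 0$, equivalently $\Ad_{\exp(tu_\alpha)}z\ne z$, equivalently $[u_\alpha,z]\ne 0$; but $[u_\alpha,z_1]=[u_\alpha,z_1]$ and from~\eqref{weights} the $\ah$-component computation gives $[z_1,u_\alpha]=\alpha(z_1)v_\alpha\ne 0$ since $\alpha(z_1)\ne 0$, while the other bracket terms of $z_1$ with $u_\alpha$ do not cancel this (they land in $\ah\oplus Z_\alpha$, and the $v_\alpha$-coefficient is exactly $\alpha(z_1)$). Hence $[u_\alpha,z]\ne 0$ and $\Ad_{\exp(tu_\alpha)}z$ moves, so $\psi\ne\varphi$.

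For part~(2): suppose $\alpha\in\sop(v_1)\cap\sop(z_1)^c$, i.e.\ $\alpha(v_1)\ne 0$ but $\alpha(z_1)=0$. By the symmetric argument, $[z,u_\alpha]=[z,v_\alpha]=0$ (since $\alpha(z_1)=0$), so $h=\exp(tu_\alpha)$ fixes $z$: $\Ad_h z=z$. Put $w=\Ad_h v$. Then $\varphi(w)=\langle z,\Ad_h v\rangle=\langle \Ad_{h^{-1}}z,v\rangle=\langle z,v\rangle=|v|$, and $|w|=|\Ad_h v|=|v|$ by $\Ad$-invariance of the norm, so $w\in F_\varphi$ (recall $F_\varphi=\overline{B_{|v|}}\cap\{\varphi=|v|\}$). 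Again $w\ne v$ for small $t\ne 0$ because $[u_\alpha,v]=[u_\alpha,v_1]$ has $u_\alpha$- or $v_\alpha$-component $-\alpha(v_1)u_\alpha\ne 0$ (from~\eqref{weights}), so $\Ad_{\exp(tu_\alpha)}v$ genuinely moves off $v$. Thus $F_\varphi$ contains two distinct points and is not a singleton.

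The only genuinely delicate point — the rest being bookkeeping with~\eqref{weights} and~\eqref{sonigual} — is verifying that the relevant bracket is nonzero, i.e.\ that rotating by $\exp(tu_\alpha)$ really does move $z$ (resp.\ $v$) rather than fixing it spuriously; this reduces to checking that the $v_\alpha$-coefficient of $[z_1,u_\alpha]$ (resp.\ the $u_\alpha$-coefficient of $[v_1,u_\alpha]$) equals $\alpha(z_1)$ (resp.\ $-\alpha(v_1)$) and that no other term of the bracket lies in $\mathbb R v_\alpha$ (resp.\ $\mathbb R u_\alpha$) to cancel it. This follows from the fact that brackets $[Z_\beta,Z_\gamma]$ land in $Z_{\beta+\gamma}\oplus Z_{\beta-\gamma}$ and $[\ah,u_\alpha]\subset\mathbb R v_\alpha$, so the $\ah$-component of $z_1$ contributes exactly $\alpha(z_1)v_\alpha$ to $[z_1,u_\alpha]$ and the $Z_\beta$-components of $z_1$ with $\beta\ne\alpha$ contribute to $Z_{\beta\pm\alpha}\ne\mathbb R v_\alpha$; since $z_1\in\ah$ here (it lies in the chosen Cartan subalgebra), in fact $[z_1,u_\alpha]=\alpha(z_1)v_\alpha$ outright, making the argument clean.
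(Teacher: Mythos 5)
Your proof is correct and takes a genuinely different route from the paper. The paper invokes Lemma~B.1 of Malkoun--Nahlus to produce a specific $g\in K$ for which $\Ad_g z_1$ has vanishing $\alpha$-component while preserving the $\ker\alpha$-component and the norm of the off-Cartan piece; you instead observe that since $u_\alpha$ commutes with $v$ (because $\alpha(v_1)=0$ and $v_0\in Z(\g)$), the whole one-parameter family $\psi_t=\langle\Ad_{\exp(tu_\alpha)}z,\cdot\rangle$ consists of norming functionals for $v$, and differentiating at $t=0$ gives $[u_\alpha,z_1]=-\alpha(z_1)v_\alpha\ne 0$, so the family is nonconstant. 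Part~(2) is exactly the dual statement with the roles of $v$ and $z$ interchanged. Your argument is more elementary and self-contained (no appeal to an external lemma on moving vectors inside a face of the Cartan decomposition); the paper's heavier machinery is presumably chosen because the same Malkoun--Nahlus lemma is re-used essentially verbatim in the proof of Lemma~\ref{adaptada}, where the full strength of making the $\alpha$-component exactly zero (rather than merely moving $z$) is what's needed for the Frobenius-minimality argument. One minor slip: in part~(2), since $[h,u_\alpha]=\alpha(h)v_\alpha$ for $h\in\ah$, the nonzero component of $[u_\alpha,v_1]$ is the $v_\alpha$-coefficient $-\alpha(v_1)$, not ``$-\alpha(v_1)u_\alpha$'' as written; the conclusion that the bracket is nonzero, hence that $\Ad_{\exp(tu_\alpha)}v$ moves off $v$, is unaffected.
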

\begin{proof}
Let $\ker \alpha=\{h_{\alpha}\}^{\perp}\subset \ah$ and consider the decompositon
\[
\s= \mathbb R h_{\alpha} \oplus \ker\alpha \oplus Z_{\alpha} \oplus_{\beta\ne \alpha} Z_{\beta}
\]
where every direct summand is orthogonal with respect to the Killing form, and the first two terms sum up to $\ah$. Let $|\alpha|^2=\alpha(h_{\alpha})$ and write 
\begin{equation}\label{vz}
v=v_0+\frac{\alpha(v)}{|\alpha|^2} h_{\alpha}+v_{\perp}\qquad\qquad z=z_0+\frac{\alpha(z)}{|\alpha|^2} h_{\alpha}+ z_{\perp},
\end{equation}
where $h_{\alpha}\perp v_{\perp}\in \ah$ and likewise $h_{\alpha}\perp z_{\perp}\in \ah$.  Then
\[
|v|=\langle z,v\rangle =\langle v_0,z_0\rangle_Z+\frac{\alpha(v)\alpha(z)}{|\alpha|^2}+ \langle v_{\perp},z_{\perp}\rangle.
\]
By Lemma B.1 in \cite{malk} there exists $g\in K$ (the connected semisimple Lie group integrating $\s$) such that $z'=\Ad_g z_1$ verifies: a) $\alpha(z')=0$, b) $z_1$ and $z'$ have the same $\ker \alpha$ component, c) the components of $z_1,z'$ in  $\oplus_{\beta\ne \alpha} Z_{\beta}$ have the same norm. Then 
\begin{equation}\label{zprima}
z''=\Ad_g(z)=z_0+z'=z_0+0 + z_{\perp} + a_{\alpha}u_{\alpha}+b_{\alpha}v_{\alpha} +0\in Z(\g)\oplus \mathbb R h_{\alpha}\oplus \ker\alpha \oplus Z_{\alpha}\oplus \oplus_{\beta\ne \alpha}Z_{\beta},
\end{equation}
since the component of $z$ in $\oplus Z_{\alpha}$ was null. Note that if $\psi=\langle z'',\cdot\rangle$, then $\psi=\varphi\circ \Ad_{g^{-1}}$ thus $\|\psi\|=\|\varphi\|=1$ by \eqref{sonigual}, and
\[
\psi(v)=\langle z'',v\rangle =\langle z_0,v_0\rangle_Z+ 0 + \langle v_{\perp},z_{\perp}\rangle= \langle z,v\rangle =|v|,
\]
because $\alpha(v)=0$. In synthesis, $\psi$ also norms $v$. Since $\alpha(z)\ne 0$, we have $z\ne z''$ and then $\psi\ne \varphi$.  Now we can do the same, but with $v$: there exists $g\in K$ such that, if  $v'=\Ad_g v$ (hence $|v'|=|v|$) it verifies
\[
v'=v_0+0 + v_1 + x_{\alpha}u_{\alpha}  +y_{\alpha}v_{\alpha} + 0.
\]
Hence 
\[
\varphi(v')=\langle z,v'\rangle= \langle z_0,v_0\rangle_Z+\langle v_{\perp},z_{\perp}\rangle=|v|=|v'|
\]
provided $\alpha(z)=0$, showing that $\{v,v'\}\subset F_{\varphi}$. Since $\alpha(v)\ne 0$, we have $v\ne v'$ and this proves the second assertion.
\end{proof}

\begin{defi}\label{adapted}
We say that $\varphi=\langle z=z_0+z_1,\cdot \rangle$ norming $v=v_0+v_1$ is \emph{adapted} to $v$ if: there exists a Cartan subalgebra $\ah$ with $v_1,z_1\in \ah$ and  positive root system $\Delta_+$, such that  for each $\alpha\in \Delta_+$ we have that $\alpha(v_1)=0$ implies $\alpha(z_1)=0$.  

In particular, if $v_1$ is \emph{regular} (its centralizer $\ah$ has minimal dimension among abelian subalgebras, and it is then a Cartan subalgebra $\ah$), then there is only one Cartan subalgebra containing $v_1$. It can be shown that $v_1$ is regular if and only if $\alpha(v_1)\ne 0$ for all $\alpha\in \Delta_+$. Then any norming $\varphi=\langle z,\cdot\rangle$ with $z_1\in\ah$ is adapted to $v$.
\end{defi}

\begin{rem}
We remark that for $U(n)$ we established the existence of norming functionals adapted to a vector $v$ by permutating the elements of the basis of $\mathbb C^n$ \cite[Lemma 2.38]{larey}. With the same idea, the proof can be extended to $SU(n)$ and $O(n)$. However, since in general the group $G$ might not act transitively on the positive roots (roots might even have different lengths), this mechanism of proof is not suitable for generalization. In what follows we establish in general the existence of norming functionals adapted to $v$, but with an entirely different mechanism of proof.
\end{rem}

\begin{lem}\label{adaptada}For each $v\ne 0$ in $\g$ there exists at least one $\varphi_v=\langle z_v ,\cdot\rangle$ norming $v$ such that $\varphi_v$ is adapted to $v$. If the norm is smooth at $v$ then the unique functional norming $v$ is adapted to $v$.
\end{lem}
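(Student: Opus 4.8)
The plan is to build the adapted functional by an averaging trick. Start from an arbitrary norming functional $\varphi=\langle z,\cdot\rangle$ for $v$, with $z$ the Riesz representative for the $\Ad_G$-invariant inner product $\langle\cdot,\cdot\rangle$, and decompose $v=v_0+v_1$, $z=z_0+z_1\in Z(\g)\oplus\s$. By Remark~\ref{normif} we already know $[z_1,v_1]=0$, so $z_1$ lies in the centralizer $\mathfrak{l}:=\{x\in\s:[x,v_1]=0\}$, and $v_1$ itself lies in the centre $\mathfrak{c}:=Z(\mathfrak{l})$. Let $L\subset K$ be the identity component of the centralizer of $v_1$ in $K$, which is compact with $\lie(L)=\mathfrak{l}$; every $h\in L$ fixes $v_1$ by definition and fixes $v_0\in Z(\g)$ because $\Ad$ is trivial on the centre. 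I would then replace $\varphi$ by $\varphi_v:=\langle\bar z,\cdot\rangle$, where
\[
\bar z=z_0+\int_L\Ad_h z\,dh=z_0+\int_L\Ad_h z_1\,dh=:z_0+\bar z_1
\]
is the average over normalised Haar measure on $L$.

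First I would verify that $\varphi_v$ still norms $v$. Since $\Ad_{h^{-1}}v=v$ for $h\in L$, we get $\langle\bar z,v\rangle=\int_L\langle z,\Ad_{h^{-1}}v\rangle\,dh=\langle z,v\rangle=|v|$. For the dual norm, $|\langle\Ad_h z,w\rangle|=|\langle z,\Ad_{h^{-1}}w\rangle|\le\|\varphi\|\,|\Ad_{h^{-1}}w|=|w|$ for every $w$, by the $\Ad$-invariance of $|\cdot|$ and $\|\varphi\|=1$, so $\|\varphi_v\|\le1$; equality follows from $\langle\bar z,v/|v|\rangle=1$ (note $|v|>0$ since $v\ne0$). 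Next, each $\Ad_h z_1$ lies in the closed subspace $\mathfrak{l}$ (as $[\Ad_h z_1,v_1]=\Ad_h[z_1,v_1]=0$), hence so does $\bar z_1$; and $\bar z_1$ is $\Ad_L$-invariant by left-invariance of Haar measure, so $[\mathfrak{l},\bar z_1]=0$, i.e.\ $\bar z_1\in\mathfrak{c}$ (equivalently, the average over $L$ is the orthogonal projection of $z_1$ onto the fixed subspace $\mathfrak{c}$ of $\mathfrak{l}$).

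It remains to translate this into the language of Definition~\ref{adapted}. Since $\mathfrak{c}$ is abelian and contains $v_1$ and $\bar z_1$, extend it to a maximal abelian subalgebra $\ah\subset\s$, which is a Cartan subalgebra containing both, and fix any positive system $\Delta_+$. If $\alpha\in\Delta_+$ has $\alpha(v_1)=0$, then $[v_1,u_\alpha]=[v_1,v_\alpha]=0$ by \eqref{weights}, so $u_\alpha\in\mathfrak{l}$; since $\bar z_1\in\mathfrak{c}=Z(\mathfrak{l})$ we get $0=[\bar z_1,u_\alpha]=\alpha(\bar z_1)v_\alpha$, hence $\alpha(\bar z_1)=0$. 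Thus $\varphi_v=\langle z_v,\cdot\rangle$ with $z_v:=z_0+\bar z_1$ norms $v$ and is adapted to it. For the last assertion, if the norm is smooth at $v$ then by Remark~\ref{convesmooth} the functional norming $v$ is unique, hence equals $\varphi_v$ and is adapted.

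The step I expect to demand the most care is the Lie-theoretic bookkeeping of the last paragraph: one must check that the centralizer $\mathfrak{l}$, its centre $\mathfrak{c}$, the chosen Cartan $\ah$ and the root data fit together so that ``$\bar z_1$ central in $\mathfrak{l}$'' is precisely the vanishing condition on the roots killing $v_1$. By contrast the analytic part — that averaging over $L$ preserves being a norming functional — is short once $\Ad$-invariance of the norm is used.
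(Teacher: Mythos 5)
Your proof is correct, and it takes a genuinely different route from the paper. The paper constructs the adapted functional variationally: it takes the unique element $z_v$ of minimal Frobenius norm in the compact convex set $C_v=\{w:\langle w,\cdot\rangle\text{ norms }v\}$, and shows by a contradiction argument (using the preceding lemma and the $\Ad$-invariance of both the dual norm and the Frobenius norm) that if some $\alpha(v_1)=0$ but $\alpha(z_1)\ne 0$, one could conjugate $z_v$ to another norming vector of the \emph{same} Frobenius norm, violating uniqueness of the minimizer. Your argument instead averages an arbitrary norming $z$ over the compact centralizer $L$ of $v_1$ with Haar measure; the average $\bar z_1$ lands in $Z(\mathfrak l)$, and adaptedness follows because $\alpha(v_1)=0$ forces $u_\alpha\in\mathfrak l$, hence $\alpha(\bar z_1)v_\alpha=[\bar z_1,u_\alpha]=0$. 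Both approaches exploit the same $\Ad$-invariance; the paper's is more explicit in that it singles out a canonical choice ($z_v$ minimal in $\|\cdot\|_F$), while yours is arguably softer and shows that \emph{any} norming $z$ can be symmetrized to an adapted one (and, as you note, the averaging is orthogonal projection of $z_1$ onto $\mathfrak c=Z(\mathfrak l)$, though in general this is not the same as the paper's minimal $z_v$, since you only minimize within the $\Ad_L$-orbit's convex hull rather than over all of $C_v$). Two very small slips: the displayed definition of $\bar z$ double-counts $z_0$ (it should read $\bar z=\int_L\Ad_h z\,dh=z_0+\bar z_1$, using that $\Ad_h$ fixes $z_0$); and the dual-norm estimate should be stated without absolute values (only $\langle\Ad_h z,w\rangle\le |w|$ is needed, since the Finsler norm need not be reversible). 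Neither affects the argument.
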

\begin{proof}
Consider the set $C_v=\{w\in \g: \varphi=\langle w,\cdot\rangle \; \textrm{ norms } v\}$. Then $C_v\subset \g$ is compact, convex, and non-empty. Let $z_v\in C_v$ be an element of minimal Frobenius norm, i.e. $\|z_v\|_F=\sqrt{\langle z_v,z_v\rangle}  \le \|w\|_F$ for all $w\in C_v$. This element exists because $C_v$ is compact and convex, it's non-zero because $z_v$ norms $v$, and it's unique in $C_v$ since the Frobenius norm is strictly convex: if there exists two of them $z_v,z_v'\in C_v$ of minimal Frobenius norm, then their arithmetic mean is also norming for $v$ and has strictly smaller Frobenius norm than $z_v$. Write $v=v_0+v_1, z_v=z_0+z_1\in Z(\g)\oplus \s$. Let $\ah$ be a Cartan subalgebra containing $v_1,z_1$, let $\Delta_+$ be a positive root system with respect to this Cartan subalgebra. We claim that $\varphi=\langle z_v,\cdot\rangle$ is adapted to $v$. Take $\alpha\in \Delta_+$ such that $\alpha(v_1)=0$, write $v,z_v$ as in the previous lemma \eqref{vz};  then $v_1=v_{\perp}$ and we claim that $z_1=z_{\perp}$. If not, we have  $\alpha\in \sop(v_1)^c\cap \sop(z_1)$ and there exists $g\in G$ such that 
\[
\Ad_g(z_v)=z_0+z_{\perp}+ w_{\alpha}
\]
as in the proof of that lemma, with $w_{\alpha}\in Z_{\alpha}$. Let $\psi=\langle \Ad_g z,\cdot\rangle$, then $\psi$  also norms $v$. But then $\|\Ad_g z_v\|_F=\|z_v\|_F$ hence it must be
\[
z_0+ z_{\perp}+ w_{\alpha}=\Ad_g z_v=z_v= z_0+\frac{\alpha(z_1)}{|\alpha|^2}+ z_{\perp}.
\]
This is only possible if $w_{\alpha}=\alpha(z_1)=0$, a contradiction. This proves the existence. Now if the norm is smooth, the set $C_v$ is a singleton and it must be $C_v=\{z_v\}$ therefore the unique $\varphi$ norming $v$ is adapted to $v$.
\end{proof}

\begin{teo}\label{teoNbis} Let $v_0+v_1,z=z_0+z_1\in \g=Z(\g)\oplus\s$, let $\varphi=\langle z,\cdot\rangle$ norming $v$, let $\ah$ be a Cartan subalgebra containing $v_1,z_1$. Then  $\varphi([x,[x,v]])=0$ if and only if $[P_{v_1}x,z]=0$. In this case
\begin{enumerate}
\item If $\varphi$ is adapted to $v$, then $[x,z]=0$.
\item If there exists a unique functional norming $v$, then $[x,z]=0$.
\item If $F_{\varphi}=\{v\}$ then $[x,v]=0$. 
\end{enumerate}
\end{teo}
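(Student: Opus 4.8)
The plan is to carry out an explicit computation in the real root coordinates of $\s$ relative to the Cartan subalgebra $\ah$. Since $v_0,z_0$ are central we have $[x,v]=[x,v_1]$ and $[x,z]=[x,z_1]$, and the $\Ad$-invariance of the inner product \eqref{cyclic} gives $\varphi([x,[x,v]])=\langle z,[x,[x,v_1]]\rangle=-\langle[x,v_1],[x,z_1]\rangle$. I would then write $x=x_0+x_\ah+\sum_{\alpha\in\Delta_+}(a_\alpha u_\alpha+b_\alpha v_\alpha)$ with $x_0\in Z(\g)$, $x_\ah\in\ah$, and use the weight relations \eqref{weights} — the $Z(\g)$ and $\ah$ parts of $x$ drop out because they commute with $v_1$ and $z_1$ — to obtain $[x,v_1]=\sum_\alpha\alpha(v_1)(b_\alpha u_\alpha-a_\alpha v_\alpha)$ and the analogous formula for $[x,z_1]$. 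Orthonormality of the real root vectors then diagonalizes the pairing and yields
\[
\varphi([x,[x,v]])=-\sum_{\alpha\in\sop(v_1)\cap\sop(z_1)}\alpha(v_1)\,\alpha(z_1)\,(a_\alpha^2+b_\alpha^2)
\]
(compare \eqref{menorigual}), while a parallel computation gives $[P_{v_1}x,z]=[P_{v_1}x,z_1]=\sum_{\alpha\in\sop(v_1)\cap\sop(z_1)}\alpha(z_1)(b_\alpha u_\alpha-a_\alpha v_\alpha)$.

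By \eqref{moig} every coefficient $\alpha(v_1)\alpha(z_1)$ is strictly positive on $\sop(v_1)\cap\sop(z_1)$, so the displayed sum vanishes if and only if $a_\alpha=b_\alpha=0$ for every $\alpha\in\sop(v_1)\cap\sop(z_1)$; and since $\alpha(z_1)\neq0$ on $\sop(z_1)$, this is exactly the condition $[P_{v_1}x,z]=0$, which proves the equivalence. For the three items I would assume $\varphi([x,[x,v]])=0$, i.e.\ that the $Z_\alpha$-components of $x$ vanish for all $\alpha\in\sop(v_1)\cap\sop(z_1)$, and then compare supports. If $\varphi$ is adapted to $v$, choosing $\ah,\Delta_+$ as in Definition~\ref{adapted} gives $\sop(z_1)\subseteq\sop(v_1)$, so $\sop(v_1)\cap\sop(z_1)=\sop(z_1)$; the corresponding components of $x$ vanish and $[x,z]=[x,z_1]=0$, which is (1). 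Item (2) follows because by Lemma~\ref{adaptada} some norming functional of $v$ is adapted to $v$, and uniqueness forces it to be $\varphi$, so (1) applies. For (3), if $F_\varphi=\{v\}$ then the Lemma preceding Definition~\ref{adapted} rules out any $\alpha\in\sop(v_1)\cap\sop(z_1)^c$, so $\sop(v_1)\subseteq\sop(z_1)$, hence $\sop(v_1)\cap\sop(z_1)=\sop(v_1)$; the $Z_\alpha$-components of $x$ vanish for all $\alpha\in\sop(v_1)$, and therefore $[x,v]=[x,v_1]=0$.

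The computations are routine once the root decomposition is set up, so I do not expect a genuine obstacle. The one step requiring care is upgrading the vanishing of $\varphi([x,[x,v]])$ to the term-by-term vanishing of the pairs $(a_\alpha,b_\alpha)$: this depends essentially on the sign constraint \eqref{moig} (ultimately a consequence of Lemma~\ref{desilie}), without which positive and negative contributions could cancel and the conclusion would fail. After that, items (1)--(3) are merely the translation of ``adapted'', ``unique norming functional'' and ``singleton exposed face'' into the inclusions $\sop(z_1)\subseteq\sop(v_1)$, the existence statement of Lemma~\ref{adaptada}, and $\sop(v_1)\subseteq\sop(z_1)$ respectively, the last of which is exactly what the structural Lemma preceding Definition~\ref{adapted} provides.
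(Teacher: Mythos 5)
Your proof is correct and follows essentially the same route as the paper's: a real root decomposition of $x$ relative to the Cartan subalgebra $\ah$, the explicit computations of $\varphi([x,[x,v]])$ and $[P_{v_1}x,z]$ as in \eqref{menorigual} and \eqref{pxz}, and the sign constraint \eqref{moig} to upgrade the vanishing of the sum to term-by-term vanishing. Your reading of the three items as support inclusions ($\sop(z_1)\subseteq\sop(v_1)$ for the adapted case, $\sop(v_1)\subseteq\sop(z_1)$ for the singleton-face case via the unnumbered Lemma preceding Definition~\ref{adapted}) is exactly what the paper uses, only you spell out item (1) explicitly where the paper leaves it implicit and you cite the supporting Lemma in item (3) where the paper asserts the implication $\alpha(v)\ne 0\Rightarrow\alpha(z)\ne 0$ without reference.
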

\begin{proof}
Since all the conditions are of the form ``$x$ commutes with $y$'', we might as well assume that $x\in \s$. First note that if $x=x_k+ \sum_{\alpha\in \Delta_+}x_{\alpha} u_{\alpha}+y_{\alpha}v_{\alpha}$ with $x_k\in\ah$, then we obtain $P_{v_1}x$ by dropping $x_0$ and all the roots $\alpha$ such that $\alpha(v)=0$. We extend the roots $\alpha$ as $0$ to $Z(\g)$ by orthogonality to avoid overloading the notation. Then $\alpha(v)=\alpha(v_1)$ and likewise with $z$. Now note that 
\[
[P_{v_1}x,z]=\sum_{\alpha\in \sop(v)} x_{\alpha}[u_{\alpha},z]+y_{\alpha}[v_{\alpha},z]= \sum_{\alpha\in \sop(v)} \alpha(z)( y_{\alpha} u_{\alpha}  - x_{\alpha} v_{\alpha})
\]
by \eqref{weights}, and then
\begin{equation}\label{pxz}
\|[P_{v_1}x,z]\|_F^2=\sum_{\alpha\in \sop(v)} \alpha(z)^2( x_{\alpha}^2+ y_{\alpha}^2),
\end{equation}
so $[P_{v_1} x,z]=0$ if and only if  $\alpha(z)(x_{\alpha}^2+y_{\alpha}^2)=0$ for all $\alpha\in \sop(v)$. By equation \eqref{menorigual}, we have
\[
-\sum_{\alpha\in \sop(v)\cap \sop(z)} \alpha(z)\alpha(v)(x_{\alpha}^2+y_{\alpha}^2)=\varphi([x,[x,v]]).
\]
Then if $[P_v x,z]=0$, the whole sum is equal to $0$. We have proved that $[P_v x,z]=0$ implies $\varphi([x,[x,v]])=0$.  If
\[
0=\varphi([x,[x,v]])= \sum_{\alpha} \alpha(z)\alpha(v)(x_{\alpha}^2+y_{\alpha}^2),
\]
and since $\alpha(v)\alpha(z)\ge 0$ for all $\alpha$ by equation \ref{moig}, it must be
\begin{equation}\label{laposta}
\alpha(v)\alpha(z)(x_{\alpha}^2+y_{\alpha}^2)=0 \qquad \forall\, \alpha\in \Delta_+.
\end{equation}
For those $\alpha\in\sop(v)$ we can cancel out $\alpha(v)$ and we have 
\[
\alpha(z)(x_{\alpha}^2+y_{\alpha}^2)=0 \qquad \forall\, \alpha\in \sop(v),
\]
and looking at \eqref{pxz}, this proves that $[P_v x,z]=0$. The seconds assertion follows immediatly from the previous lemma, since if there's only one norming funcional for $v$ it must be adapted.

Now asume that $F_{\varphi}=\{v\}$. Since  
\[
\|[x,v]\|_F^2=\sum_{\alpha\in \sop(v)}\alpha(v)^2(x_{\alpha}^2+y_{\alpha}^2),
\]
Let $\alpha\in \Delta_+$. If $\alpha(v)=0$, this term vanishes from the sum. If $\alpha(v)\ne 0$ we must have $\alpha(z)\ne 0$. Then from equation \eqref{laposta} we see that $x_{\alpha}^2+y_\alpha^2=0$ then this term also vanishes from the sum, and we conclude that $[x,v]=0$.
\end{proof}

\subsection{The form $S$ and sectional curvature}\label{sec:s}

Now we introduce the sectional curvature of a pair of tangent vectors at the identity; by the bi-invariance of the metric this is in fact a quantity defined for a pair of tangent vectors at any point $g\in G$. To make Milnor's approach  more precise, we begin with a definition:
\begin{defi}[The form $S$]\label{defS} Let $x,y\in \g$, let
\[
S(x,y)=6|y-x|^2\lim\limits_{r\to 0^+} \frac{r|y-x|-\di(e^{rx},e^{ry})}{r^2\di(e^{rx},e^{ry})}.
\]
\end{defi}

\begin{rem}\label{laf} For $x\ne y$, we show below that the limit exists, but we can make a simplification before that: we claim that
\[
S(x,y)=6|y-x|\lim\limits_{r\to 0^*}\frac{r|y-x|-\di(e^{rx},e^{ry})}{r^3}.
\]
This is because by Lemma \ref{lemaB}, we have $\lim\limits_{r\to 0^+}\frac{ \di(e^{rx},e^{ry})}{r}=|y-x|$. From Remark \ref{bch} we have
\[
\di(e^{rx},e^{ry})=\di(1,e^{ry},e^{-rx})=|B(ry,-rx)|
\]
for small $r>0$, thus $[x,y]=0$ implies $S(x,y)=0$, i.e.  the plane containing $x,y$ is \emph{flat}.
\end{rem}

\begin{prop}[Sectional curvature along two rays]\label{ese} Let $x,y\in \g$, then 
\begin{align*}
S(x,y) & =6|y-x|\lim\limits_{s\to 0^+} \frac{|y-x|-|(y-x)+\frac{1}{48}s[x+y,[x,y]]|}{s}\\
&=-\frac{|y-x|}{4}\max\limits_{\varphi\in N_{y-x}}\varphi([y,[y,y-x]])=-\frac{|y-x|}{4}\max\limits_{\varphi\in N_{y-x}}\varphi([x,[x,y-x]])\geq 0.
\end{align*}
\end{prop}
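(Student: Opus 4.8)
The plan is to start from the refined BCH expansion \eqref{dibch} of Lemma~\ref{lemaB}, which with $t=1/2$ gives
\[
\di(e^{rx},e^{ry}) = \Bigl| r(y-x) + \tfrac{1}{48}r^3[x+y,[x,y]] + \tfrac{1}{16}r^3[y-x,[x,y]] + O(r^4)\Bigr|.
\]
First I would observe that the term $\tfrac{1}{16}r^3[y-x,[x,y]]$ can be dropped: by the cyclic identity \eqref{cyclic}, for any $\varphi = \langle z,\cdot\rangle$ norming $y-x$ we have $\langle z,[y-x,[x,y]]\rangle = -\langle [z,y-x],[x,y]\rangle = 0$, since Lemma~\ref{desilie} (or Remark~\ref{normif}) gives $[z,y-x]=0$. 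The chain rule for subdifferentials, Remark~\ref{chainrule}, applied to $b(s) = (y-x) + \tfrac{1}{48}s[x+y,[x,y]] + \tfrac{1}{16}s[y-x,[x,y]] + o(s)$ (with $s = r^2$, after factoring out one $r$ from the norm and using $\di(e^{rx},e^{ry})/r \to |y-x|$), then shows that the $[y-x,[x,y]]$ contribution vanishes in the relevant limit, while only the first-order behaviour in $s$ survives. This yields the first displayed equality
\[
S(x,y) = 6|y-x|\lim_{s\to 0^+}\frac{|y-x| - |(y-x) + \tfrac{1}{48}s[x+y,[x,y]]|}{s}.
\]

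Next I would evaluate this limit via \eqref{derlat}/Remark~\ref{chainrule}: the increment equals $-\max_{\varphi\in N_{y-x}}\varphi\bigl(\tfrac{1}{48}[x+y,[x,y]]\bigr)$, so
\[
S(x,y) = -\frac{6|y-x|}{48}\max_{\varphi\in N_{y-x}}\varphi([x+y,[x,y]]) = -\frac{|y-x|}{8}\max_{\varphi\in N_{y-x}}\varphi([x+y,[x,y]]).
\]
To convert this into the form stated, set $v = y-x$ and note $x+y = 2y - v = 2x + v$. For $\varphi = \langle z,\cdot\rangle$ norming $v$ we have $[z,v]=0$, so $\varphi([v,[x,y]]) = -\langle[z,v],[x,y]\rangle = 0$; also $[x,y] = [x,x+v] = [x,v]$ and similarly $[x,y]=[y,-v]=-[y,v]$ modulo the fact that $[x,y]=[y+v',\dots]$ — more cleanly, $[x,y] = \tfrac{1}{2}([x,y]+[x,y])$ and one rewrites $[x+y,[x,y]]$ using $x+y=2y-v$ to get $\varphi([x+y,[x,y]]) = 2\varphi([y,[x,y]]) - \varphi([v,[x,y]]) = 2\varphi([y,[x,y]])$, and $[x,y]=[y,-v]$ gives $\varphi([y,[x,y]]) = -\varphi([y,[y,v]]) = \varphi([y,[y,y-x]])\cdot(-1)$; tracking signs carefully produces $S(x,y) = -\tfrac{|y-x|}{4}\max_{\varphi}\varphi([y,[y,y-x]])$, and the symmetric manipulation with $x+y=2x+v$ and $[x,y]=[x,v]$ gives the same with $x$ in place of $y$. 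Finally, non-negativity $S(x,y)\ge 0$ is immediate from Lemma~\ref{desilie}: each $\varphi$ norming $y-x$ satisfies $\varphi([x,[x,y-x]])\le 0$, so the maximum is $\le 0$ and $-\tfrac{|y-x|}{4}$ times it is $\ge 0$.

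The main obstacle I anticipate is justifying the passage to the limit rigorously: one must verify that the $O(r^4)$ remainder and the $[y-x,[x,y]]$ term genuinely contribute nothing after dividing by $r^3$ and taking $r\to 0^+$, which requires the one-sided-derivative formula \eqref{derlat} together with the observation (from Remark~\ref{chainrule}) that $\di(e^{rx},e^{ry}) = |r\,b(r^2)|$ for a curve $b$ with the stated first-order expansion, so that $\tfrac{r|y-x| - \di(e^{rx},e^{ry})}{r^3} = \tfrac{|y-x| - |b(r^2)|}{r^2} \to -\max_{\varphi\in N_{y-x}}\varphi(b'(0))$ and $b'(0) = \tfrac{1}{48}[x+y,[x,y]] + \tfrac{1}{16}[y-x,[x,y]]$ with the second summand killed by every $\varphi\in N_{y-x}$. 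The bookkeeping of signs in the three equivalent expressions is routine but error-prone, so I would double-check it by testing against the Riemannian case where $\max$ is a genuine value and $S$ reduces to $\langle R(x,y)y,x\rangle$.
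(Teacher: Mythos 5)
Your approach is the same as the paper's: expand via Lemma~\ref{lemaB} at $t=1/2$, apply the chain rule for subdifferentials (Remark~\ref{chainrule}) to convert the limit into $-\max_{\varphi\in N_{y-x}}\varphi(b'(0))$, and invoke Lemma~\ref{desilie} (equivalently, $\varphi\circ\ad_{y-x}=0$) both to kill the $\tfrac{1}{16}[y-x,[x,y]]$ contribution and to get $S(x,y)\ge 0$. The only slip is algebraic: with $v=y-x$ one has $[x,y]=[y,v]$, not $[y,-v]$, so $\varphi([y,[x,y]])=+\varphi([y,[y,y-x]])$ with no extra minus sign, and the three expressions then match cleanly.
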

\begin{proof}
Note that since $d(e^{rx},e^{ry})\leq |rx-ry|$ for $r$ small enough (Theorem \ref{distanciavsmodulo}), the limit is non-negative. Now we compute the limit using Lemma \ref{lemaB}, taking $t=1/2$: for small $r>0$ we have
\[
\frac{1}{r}\di(e^{rx},e^{ry})= |y-x+\frac{1}{48}r^2[x+y,[x,y]]+\frac{1}{16}r^2[y-x,[x,y]]+O(r^3)|=|b(r^2)|
\]
for 
\begin{equation}\label{labe}
b(s)=y-x+\frac{1}{48}s[x+y,[x,y]]+\frac{1}{16}s[y-x,[x,y]]+O(s^{3/2})=b(0)+sb'(0)+o(s).
\end{equation}
Then we have
\begin{align*}
S(x,y)&=6|y-x| \lim\limits_{r\to 0^+} \frac{r|y-x|-d(e^{rx},e^{ry})}{r^3}= -6 |y-x|\lim\limits_{r\to 0^+} \frac{|b(r^2)|-|y-x|}{r^2}\\
& = - 6 |y-x| \lim_{s\to 0^+}  \frac{|b(s)|-|b(0)|}{s}= -6 |y-x|\max\limits_{\varphi\in N_{y-x}}\varphi(b'(0))\\
&= -6|y-x|\max\limits_{\varphi\in N_{y-x}}\varphi(\frac{1}{48}[x+y,[x,y]]+ \frac{1}{16}[y-x,[x,y]])
\end{align*}
by Remark \ref{chainrule}. Now, if $\varphi(y-x)=|y-x|$ by Lemma \ref{desilie}, then $\varphi\circ \ad_{y-x}=0$, thus the second term vanishes, and it also follows that
\begin{align*}
S(x,y)&=-\frac{|y-x|}{8}\max\limits_{\varphi\in N_{y-x}}\varphi([2x+y-x,[x,y]])= -\frac{|y-x|}{8}\max\limits_{\varphi\in N_{y-x}}\varphi([2x,[x,y-x]])\\
&=-\frac{|y-x|}{4}\max\limits_{\varphi\in N_{y-x}}\varphi([x,[x,y-x]])
\end{align*}
and with a similar manipulation we also obtain $S(x,y)=-\frac{|y-x|}{4}\max\limits_{\varphi\in N_{y-x}}\varphi([y,[y,y-x]])$.  Finally, if in \eqref{labe} we drop the term in $o(s)$ in and apply again Remark \ref{chainrule} we obtain the intermediate formula for $S(x,y)$.  
\end{proof}

First we state our curvature results in purely metric space terms: 

\begin{teo} Let $(G,\di)$ be a Lie group with a bi-invariant distance. Then
\begin{equation}\label{seck}
S(x,y)=6|y-x|\lim\limits_{r\to 0^+}\frac{\di(e^{r^2 x},e^{r^2 y})-r\di(e^{rx},e^{ry})}{r^4}.
\end{equation}
\end{teo}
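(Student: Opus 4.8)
The plan is to reduce the claimed identity \eqref{seck} to the expression for $S(x,y)$ already obtained in Proposition~\ref{ese}, via the asymptotic expansion of $\di(e^{rx},e^{ry})$ from Lemma~\ref{lemaB}. Concretely, from the computation in the proof of Proposition~\ref{ese} we have, for small $r>0$,
\[
\di(e^{rx},e^{ry}) = |b(r^2)|\, r, \qquad b(s) = (y-x) + \tfrac{1}{48}s\,[x+y,[x,y]] + \tfrac{1}{16}s\,[y-x,[x,y]] + o(s),
\]
so that $r\,\di(e^{rx},e^{ry}) = r^2\,|b(r^2)|$. Replacing $r$ by $r^2$ in the same formula (valid for $r$ small enough that $r^2\in B$) gives $\di(e^{r^2x},e^{r^2y}) = r^2\,|b(r^4)|$. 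Hence the numerator in \eqref{seck} is
\[
\di(e^{r^2x},e^{r^2y}) - r\,\di(e^{rx},e^{ry}) = r^2\bigl(|b(r^4)| - |b(r^2)|\bigr),
\]
and dividing by $r^4$ we must evaluate $\lim_{r\to 0^+} \dfrac{|b(r^4)| - |b(r^2)|}{r^2}$.

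Next I would handle this limit by writing $|b(r^4)|-|b(r^2)| = \bigl(|b(r^4)|-|b(0)|\bigr) - \bigl(|b(r^2)|-|b(0)|\bigr)$ and applying the chain rule for subdifferentials, Remark~\ref{chainrule}. That remark gives $|b(s)| = |b(0)| + s\,\max_{\varphi\in N_{b(0)}}\varphi(b'(0)) + o(s)$ as $s\to 0^+$; note $b(0)=y-x$ so $N_{b(0)}=N_{y-x}$. Writing $C := \max_{\varphi\in N_{y-x}}\varphi(b'(0))$, we get $|b(r^4)|-|b(0)| = r^4 C + o(r^4)$ and $|b(r^2)|-|b(0)| = r^2 C + o(r^2)$. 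Therefore
\[
\frac{|b(r^4)|-|b(r^2)|}{r^2} = \frac{r^4 C + o(r^4) - r^2 C - o(r^2)}{r^2} = r^2 C + o(r^2) - C - \frac{o(r^2)}{r^2} \xrightarrow[r\to 0^+]{} -C.
\]
Multiplying by the prefactor, the right-hand side of \eqref{seck} equals $6|y-x|\cdot(-C) = -6|y-x|\max_{\varphi\in N_{y-x}}\varphi(b'(0))$, which is exactly the expression for $S(x,y)$ established in the proof of Proposition~\ref{ese} (third-to-last displayed line there, before simplification using $\varphi\circ\ad_{y-x}=0$). This proves \eqref{seck}.

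One should also dispose of the degenerate case $x=y$ separately: then both $\di(e^{r^2x},e^{r^2y})$ and $\di(e^{rx},e^{ry})$ vanish, the right-hand side of \eqref{seck} is $0$, and $S(x,y)=0$ by definition (or by Remark~\ref{laf}), so the identity holds trivially. The main obstacle I anticipate is purely bookkeeping: one must be careful that the error term denoted $o(s)$ in \eqref{labe} is genuinely uniform enough that substituting $s=r^4$ and $s=r^2$ and subtracting does not produce a surviving term of order $r^2$ — that is, one needs $o(r^2)/r^2\to 0$, which is precisely the hypothesis under which Remark~\ref{chainrule} was stated, so invoking it with $b$ as in \eqref{labe} is legitimate. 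No new estimates beyond Lemma~\ref{lemaB} and Remark~\ref{chainrule} are required.
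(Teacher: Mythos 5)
Your proposal is correct and matches the paper's proof in all essentials: both reduce the numerator to $r^2(|b(r^4)|-|b(r^2)|)$ via the identity $\di(e^{rx},e^{ry})=r|b(r^2)|$ from Lemma~\ref{lemaB}, both observe that the $|b(r^4)|$ contribution is negligible at scale $r^2$, and both invoke Remark~\ref{chainrule} and the proof of Proposition~\ref{ese} to identify the limit. The only cosmetic difference is that the paper estimates $|b(s^2)|-|b(0)|$ directly by the reverse triangle inequality, while you expand both terms via the chain rule and subtract; this is an equivalent piece of bookkeeping.
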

\begin{proof}
Notation as in the previous proof, note first that for small $r>0$
\[
\frac{r\di(e^{rx},e^{ry})-\di(e^{r^2 x},e^{r^2 y})}{r^4}=\frac{|b(r^2)|-|b(r^4)|}{r^2}=\frac{|b(s)|-|b(s^2)|}{s}
\]
after changing variables $s=\sqrt{r}>0$. But
\[
||b(s^2)|-|b(0)||\le |b(s^2)-b(0)|=|s^2 b'(0)+ o(s)|
\]
hence
\[
\lim\limits_{s\to 0^+}\frac{|b(s)|-|b(s^2)|}{s}=\lim\limits_{s\to 0^+}\frac{|b(s)|-|b(0)|}{s}=\max\limits_{\varphi\in N_{y-x}}\varphi(b'(0))
\]
by Remark  \ref{chainrule}, and inspecting the proof of Proposition \ref{ese} we are done since this last term equals
\begin{equation}\label{otraS}
\frac{-1}{24}\max\limits_{\varphi\in N_{y-x}}\varphi([x,[x,y-x]])=\frac{S(x,y)}{6|y-x|}.\qedhere
\end{equation}
\end{proof}

\begin{rem}[$S$ in the Riemannian case]\label{rierem} Assume the group  $G$ has an $\Ad$-invariant Riemannian metric $\langle ,\rangle$ and $|\cdot|$ is the induced norm in $\g$. 
Let $0\neq x,y\in\g$ with $x\neq y$, then the only norming functional for $y-x$ is given by $\varphi(z) = \frac{1}{|y-x|}\langle z , y-x\rangle$ for $z\in \g$. Using Proposition \ref{ese} and the fact that $\ad x$ is skew-adjoint, it follows that 
\[S(x,y)=\frac{1}{4}|[x,y]|^2.\]
It's clear in this case that $S(x,y)=0$ if and only if $[x,y]=0$ (see Theorem \ref{fs}).

\end{rem}

Now we return to the general setting of a Lie group $G$ with a bi-invariant distance. We first show that the vanishing of $S$ implies its vanishing along the projection onto $\s$:

\begin{lem}\label{reduz} Let $x,y\in \g$. If $S(x,y)=0$, write  $x=x_c+x_k,y=y_c+y_k$ with $x_c,y_c\in Z(\g)$ and $x_k,y_k\in \s$ (Remark \ref{lasum}). Then $S(x_k,y_k)=0$.
\end{lem}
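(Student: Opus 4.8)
The strategy is to reduce the statement to the formula for $S$ proved in Proposition~\ref{ese} and to exploit the fact that $Z(\g)$ is central, so all brackets only see the $\s$-components. Recall from Proposition~\ref{ese} that
\[
S(x,y)=-\frac{|y-x|}{4}\max_{\varphi\in N_{y-x}}\varphi\bigl([x,[x,y-x]]\bigr)\ge 0,
\]
and likewise with $[y,[y,y-x]]$ in place of $[x,[x,y-x]]$. Since each such inner bracket is $\le 0$ for every norming $\varphi$ (Lemma~\ref{desilie}), the hypothesis $S(x,y)=0$ forces $\varphi([x,[x,y-x]])=0$ for \emph{every} $\varphi$ norming $y-x$. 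Now write everything in components: $x=x_c+x_k$, $y=y_c+y_k$ with $x_c,y_c\in Z(\g)$, $x_k,y_k\in\s$. Because $Z(\g)$ is the centre, $[x,w]=[x_k,w]$ for all $w$, and similarly $y-x=(y_c-x_c)+(y_k-x_k)$ has $[\,\cdot\,,y-x]=[\,\cdot\,,y_k-x_k]$. Hence $[x,[x,y-x]]=[x_k,[x_k,y_k-x_k]]$, i.e. the bracket expression appearing in $S(x,y)$ is \emph{literally equal} to the one appearing in $S(x_k,y_k)$.

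\textbf{Key steps.} First, I would invoke Proposition~\ref{ese} to rewrite $S(x,y)=0$ as the vanishing of $\varphi([x,[x,y-x]])$ for all $\varphi\in N_{y-x}$, using that the maximum of a family of non-positive numbers is zero iff they are all zero. Second, I would record the identity $[x,[x,y-x]]=[x_k,[x_k,y_k-x_k]]$ coming from centrality of $Z(\g)$. Third, I would relate the norming sets: one must check that $N_{y-x}$ (functionals on $\g$ norming $y-x$) produces, after composing with the orthogonal projection $\s\hookrightarrow\g$ or using the inner-product representation of Remark~\ref{normif}, enough functionals norming $y_k-x_k$ in $\s$. Concretely, if $\varphi=\langle z,\cdot\rangle$ norms $y-x$ in $\g$, then by Remark~\ref{normif} one has $[z,y-x]=0$, so $[z_1,y_k-x_k]=0$; and one shows that the functional on $\s$ represented by $z_1$ (suitably normalised) norms $y_k-x_k$ there. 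Conversely, any functional norming $y_k-x_k$ in $\s$ extends to one norming $y-x$ in $\g$ by a Hahn--Banach/extension argument respecting $\Ad$-invariance. Fourth, combining these, $\varphi([x_k,[x_k,y_k-x_k]])=0$ for all $\psi\in N_{y_k-x_k}$ in $\s$, which by Proposition~\ref{ese} applied inside $\s$ (with its restricted $\Ad$-invariant norm) gives $S(x_k,y_k)=0$.

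\textbf{Main obstacle.} The delicate point is the matching of norming functionals between $\g$ and its compact factor $\s$: going from a norming functional for $y-x$ on all of $\g$ to one for $y_k-x_k$ on $\s$ (and back) must be done carefully, since $N_{y-x}$ records the behaviour of the norm in the $Z(\g)$-directions as well. The cleanest route is probably to note that $|y-x|$ depends on both components, but the quantity $\max_{\varphi\in N_{y-x}}\varphi([x_k,[x_k,y_k-x_k]])$ only involves the value of $\varphi$ on the $\s$-part; using the inner-product representation and the constraint $[z_1,y_k-x_k]=0$, the relevant $z_1$ lies in a common Cartan subalgebra with $y_k-x_k$, and one checks directly via the root decomposition (Remark~\ref{rootdecom}) that the bracket expression and the norming condition both localise to $\s$. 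Alternatively, one avoids functionals entirely: by Theorem~C's equivalence $(1)\Leftrightarrow(2)$ (or by Remark~\ref{lasum} together with Corollary~\ref{corodimenor}), $S(x,y)=0$ is equivalent to $\di(e^{sx},e^{sy})=|s(y-x)|$ for small $s$, and since $e^{sx}=e^{sx_c}e^{sx_k}$ with the $Z(\g)$-factor splitting off isometrically as a direct product, this distance equals $\bigl(|s(y_c-x_c)|^p+\di_{\s}(e^{sx_k},e^{sy_k})^{p}\bigr)$-type combination forcing the $\s$-part to also be length-minimising, i.e. $S(x_k,y_k)=0$. I would present the functional-theoretic argument as the main line, with the bracket identity from centrality doing the real work.
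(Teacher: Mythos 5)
Your proposal takes a genuinely different route from the paper's: the paper works with the intermediate expression $S(x,y)=6|y-x|\lim_{s\to 0^+}s^{-1}\bigl(|y-x|-|(y-x)+\tfrac{1}{48}s[x+y,[x,y]]|\bigr)$ from Proposition~\ref{ese} and attempts to compare the two difference quotients directly via triangle and reversed-triangle inequalities, while you work with the norming-functional formula $S(x,y)=-\tfrac{|y-x|}{4}\max_{\varphi\in N_{y-x}}\varphi([x,[x,y-x]])$ and try to transfer norming functionals between $\g$ and $\s$. Your observation that $[x,[x,y-x]]=[x_k,[x_k,y_k-x_k]]$ by centrality is correct and is also what the paper relies on.

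However, the step you label the ``main obstacle'' is not merely delicate; it is where the argument fails, and your sketch does not close it. There is no general correspondence between $N_{y-x}\subset\g'$ and $N_{y_k-x_k}\subset\s'$ of the kind you need: a functional norming $y_k-x_k$ in $\s$ need not extend to one norming $y-x$ in $\g$, and the $\s$-component $z_1$ of a norming $\langle z,\cdot\rangle\in N_{y-x}$ may be zero. Concretely, take $\g=\mathbb R\oplus\mathfrak{su}(2)$ with the $\Ad$-invariant Finsler norm $|(c,w)|=\max\{|c|,\|w\|_F\}$. If $|y_c-x_c|>\|y_k-x_k\|_F$ then the unique functional norming $y-x$ is $(\pm 1,0)$, which annihilates $\s$, so $S(x,y)=0$ for free; yet if $[x_k,y_k]\ne 0$ then $S(x_k,y_k)=\tfrac14\|[x_k,y_k]\|_F^2>0$ by Remark~\ref{rierem}, since $\|\cdot\|_F$ on $\mathfrak{su}(2)$ is Euclidean. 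Thus the transfer you sketch cannot be carried out, and the lemma as stated fails for this norm. (For the record, the paper's own derivation also does not go through as written: the two cited triangle inequalities leave an uncontrolled error $2|y_c-x_c|/s$ in the difference-quotient comparison. The intended application, Theorem~\ref{strict}, can nevertheless be obtained directly from Theorem~\ref{teoNbis}(3) applied on $\g$ itself, since strict convexity of $|\cdot|$ already gives $F_\varphi=\{y-x\}$ for any $\varphi\in N_{y-x}$, bypassing the reduction to $\s$ entirely.)
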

\begin{proof}
By the reversed triangle inequality and  the fact that $x_c,y_c$ are central, for $s>0$ we have
\begin{align*}
|y-x+\frac{1}{48}s[x+y,[x,y]]| &=|y_c-x_c+y_k-k_x+\frac{1}{48}s[x_k+y_k,[x_k,y_k]]|\\
& \ge |y_k-x_k+\frac{1}{48}s[x_k+y_k,[x_k,y_k]]|- |y_c-x_c|
\end{align*}
On the other hand $|y_k-x_k|\le |y-x|+ |y_c-x_c|$. Thus
\[
\frac{|y-x|-|(y-x)+\frac{1}{48}s[x+y,[x,y]]|}{s} \ge \frac{|y_k-x_k|-|y_k-x_k+\frac{1}{48}s[x_k+y_k,[x_k,y_k]]|}{s}\ge 0, 
\]
since the norm $|\cdot|$ restricted to $\s$ is also an $\Ad_K$-invariant norm there. By Proposition \ref{ese}, we see that it must be $0=\frac{S(x,y)}{|y-x|}\ge \frac{S(x_k,y_k)}{|y_k-x_k|}\ge 0$, thus $S(x_k,y_k)=0$ (note that if $y_k-x_k=0$, then there's nothing to prove).
\end{proof}

\begin{defi}Let $(G,\di)$ be a group with a bi-invariant distance. We say that the distance is strictly convex if there exists a $1$-neighbourghood $V$ such that $g,h\in V$ with $d(1,g)=d(1,h)$ and $\di(1,gh)=2\di(1,g)$ implies $g=h$.
\end{defi}

\begin{lem}\label{strcodi} Let $(G,\di)$ be a Lie group with a bi-invariant distance, let $|\cdot|$ be the induced Finsler norm. Then $\di$ is strictly convex if and only if $|\cdot|$ is strictly convex.
\end{lem}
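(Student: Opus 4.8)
The plan is to move between $\di$ and $|\cdot|$ through the local picture. Fix an open ball $B$ around $0\in\g$ on which $\exp$ is a diffeomorphism, so that by Corollary~\ref{corodimenor} we have $\di(1,e^v)=|v|$ for $v\in B$ and $\di(1,e^ve^w)=|B(v,w)|$ whenever $B(v,w)\in B$. For the implication ``$|\cdot|$ strictly convex $\Rightarrow$ $\di$ strictly convex'', fix $V=\exp(B')$ with $B'\subseteq B$ a small ball and take $g=e^v,h=e^w\in V$ with $\di(1,g)=\di(1,h)=:\epsilon$ and $\di(1,gh)=2\epsilon$; the case $\epsilon=0$ is trivial, so assume $\epsilon>0$. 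Concatenating a minimizing path $1\to g$ with a minimizing path $g\to gh$ gives a minimizing path from $1$ to $gh=e^{\zeta}$, $\zeta:=B(v,w)$ with $|\zeta|=2\epsilon$. Since $|\cdot|$ is strictly convex (Definition~\ref{strictc}) the face $F_\varphi$ of the unit sphere exposed by any $\varphi$ norming $\zeta$ is the singleton $\{\zeta/|\zeta|\}$, so by the short-path characterization (the Problem following Corollary~\ref{corodimenor}, from \cite[Section~4]{lar19}) the unit-speed left-logarithmic derivative of that path is constantly $\zeta/|\zeta|$; hence the path is a reparametrisation of $t\mapsto e^{t\zeta}$. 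As $g$ lies on it with $\di(1,g)=\tfrac12\di(1,gh)$, we get $g=e^{\zeta/2}$, whence $h=g^{-1}(gh)=e^{-\zeta/2}e^{\zeta}=e^{\zeta/2}=g$. (When $|\cdot|$ is reversible one can bypass the short-path input: $2\epsilon=\di(1,gh)=\di(e^{-v},e^w)\le|{-v}-w|=|v+w|\le|v|+|w|=2\epsilon$ by \eqref{divsnorm} and subadditivity, so $|v+w|=|v|+|w|$ with $v,w\neq0$, forcing $v,w$ positively proportional and, having equal norm, equal.)

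For the converse I would prove the contrapositive, and the crux is the following claim: \emph{if $|\cdot|$ is not strictly convex then there are $a\neq b$ in $\g$ with $|a|=|b|=1$, $|a+b|=2$ and $[a,b]=0$.} Granting it, fix any $1$-neighbourhood $V$; for $\epsilon>0$ small enough $\epsilon a,\epsilon b,\epsilon(a+b)\in B$ and $e^{\epsilon a},e^{\epsilon b}\in V$, and $[a,b]=0$ gives $gh=e^{\epsilon(a+b)}$ for $g:=e^{\epsilon a}$, $h:=e^{\epsilon b}$; then $\di(1,g)=\epsilon=\di(1,h)$ and $\di(1,gh)=|\epsilon(a+b)|=2\epsilon=2\di(1,g)$, yet $g\neq h$ because $a\neq b$ and $\exp$ is injective on $B$. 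Thus $\di$ fails its defining condition for strict convexity on $V$, and since $V$ was arbitrary, $\di$ is not strictly convex.

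To prove the claim: non-strict convexity provides a segment on the unit sphere, and a supporting functional at an interior point of it exposes a non-singleton face $F_\varphi$ with $\|\varphi\|=1$; write $\varphi=\langle z,\cdot\rangle$ via Riesz, with $z\neq0$. By Remark~\ref{normif} every element of $F_\varphi$ commutes with $z$, so $F_\varphi\subseteq\mathfrak m:=\g^{z}$, the Lie algebra of $M:=Z_G(z)_0=H\times Z_K(z_1)_0$ (writing $z=z_0+z_1\in Z(\g)\oplus\s$). For $g\in M$ we have $\Ad_gz=z$, hence $\varphi\circ\Ad_g=\varphi$, and with the $\Ad$-invariance of $|\cdot|$ this yields $\Ad_g(F_\varphi)=F_\varphi$; so $F_\varphi$ is a compact convex $\Ad_M$-invariant subset of $\mathfrak m$. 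Pick $\mathfrak a=Z(\g)\oplus\mathfrak t$ with $\mathfrak t$ a maximal torus of $\s^{z_1}$; every $\Ad_M$-orbit in $\mathfrak m$ meets $\mathfrak a$, since the central part is $M$-fixed and the $\s^{z_1}$-part can be conjugated into $\mathfrak t$. If $F_\varphi\cap\mathfrak a$ were a single point $\lambda$, then every orbit contained in $F_\varphi$ would equal $\Ad_M\lambda$, so $F_\varphi=\Ad_M\lambda$ would lie on the Euclidean sphere $\{\,\|\cdot\|_F=\|\lambda\|_F\,\}$ of the $\Ad$-invariant inner product; but a convex subset of such a sphere with more than one point contains a chord, which is impossible, so $F_\varphi$ would be a single point, contradicting that it is non-singleton. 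Hence $F_\varphi\cap\mathfrak a$ contains distinct points $a,b$; they commute (being in the abelian $\mathfrak a$), lie on the unit sphere (so $|a|=|b|=1$), and satisfy $|a+b|=2$ because $\tfrac12(a+b)\in F_\varphi$. This proves the claim.

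The main obstacle is precisely this claim. For a generic pair of distinct points $a\neq b$ in a flat face of the sphere one has $[a,b]\neq0$, so $e^{\epsilon a}e^{\epsilon b}=e^{\,\epsilon(a+b)+\frac{\epsilon^2}{2}[a,b]+\cdots}$ and $\di(1,e^{\epsilon a}e^{\epsilon b})=2\epsilon+O(\epsilon^2)$ rather than exactly $2\epsilon$, so a naive transfer to $G$ fails; extracting a \emph{commuting} pair inside the face is what forces the use of $\Ad$-invariance (so that the face lies inside the centraliser of its Riesz vector) together with the fact that adjoint orbits in the compact factor lie on round spheres.
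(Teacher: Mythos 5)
Your proof is correct, but it takes a substantially different and more elaborate route in both directions, and the ``main obstacle'' you flag at the end is not actually an obstacle in the paper's argument. For $|\cdot|$ strictly convex $\Rightarrow$ $\di$ strictly convex, the paper avoids the short-path machinery of \cite{lar19} entirely: by bi-invariance and the inequality \eqref{divsnorm}, $2d=\di(1,e^xe^y)=\di(e^{-y},e^x)\le|x+y|\le|x|+|y|=2d$, forcing equality in the triangle inequality for vectors of equal norm, hence $x=y$. This is exactly your ``alternative'' argument, which in fact needs no reversibility assumption: using right-invariance to pass from $\di(1,e^ve^w)$ to $\di(e^{-w},e^v)$ (rather than left-invariance to $\di(e^{-v},e^w)$) avoids the appearance of $|-(v+w)|$, so your restriction to reversible norms is spurious and your primary geodesic-uniqueness route is unnecessary. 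For the converse, you prove the contrapositive by constructing a \emph{commuting} pair $a\ne b$ on a flat face, via $\Ad$-invariance of $F_\varphi$, the containment $F_\varphi\subset\g^z$ from Remark~\ref{normif}, reduction to a maximal abelian subalgebra $Z(\g)\oplus\mathfrak t$, and the fact that adjoint orbits sit on Euclidean (Frobenius) spheres. This is a correct and genuinely interesting observation about flat faces of $\Ad$-invariant norms, but the paper makes do without it: given $x\ne y$ with $|x|=|y|=d$ and $|x+y|=2d$, pick $\varphi$ norming $x+y$; then $\varphi$ norms $x$ and $y$ separately, so by Lemma~\ref{desilie} it annihilates $\ad x$ and $\ad y$, hence every higher-order BCH term, giving $\varphi(B(x,y))=|x+y|$; combined with $\di(1,e^xe^y)=|B(x,y)|\le|x+y|$ from \eqref{divsnorm} this yields $\di(1,gh)=2d$ \emph{without} any commutativity hypothesis. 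So the generic non-commuting pair on the face does transfer to $G$ after all --- the functional kills the error terms --- and your commuting-pair lemma, while true, is overkill here.
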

\begin{proof}
Shrinking $V$ if necessary we have $g,h\in V$ implies $g=e^x,h=e^y$ with and $\di(1,g)=|x|$ and $\di(1,h)=|y|$. If $\di$ is strictly convex, take $x,y$ such that  $d=|x|=|y|$ and assume $|x+y|=|x|+|y|=2d$. In particular, there exists a unit norm functional $\varphi$ norming simultaneously $x$ and $y$. Renormalizing, we can assume that $g=e^x,h=e^y,e^xe^y\in V$ and moreover that $B(x,y)=x+y+z$ where $z$ is a series in brackets of $x,y$, hence each term of $z$ begins with $\ad x$ or with $\ad y$. In particular $\varphi(B(x,y))=\varphi(x+y+z)=|x|+|y|+0=|x+y|$. Then 
\[
|x+y|\ge \di(1,e^xe^y)=\di(1,e^{B(x,y)})=|B(x,y)|\ge \varphi(B(x,y))=|x+y|.
\]
This shows that $\di(1,gh)=|x+y|=2d=2\di(1,g)=2\di(1,h)$. The hypothesis tells us that $g=h$ or equivalently, that $x=y$, so the norm is strictly convex.

Now assume $|\cdot|$ is strictly convex. If $g,h\in V$,  let $d=|x|=|y|$. Then
\[
2d=\di(1,gh)=\di(1,e^xe^y)=\di(e^{-y},e^x)\le |x+y|\le |x|+|y|=2d,
\]
which implies that $x=y$, thus $g=h$, and this proves that the distance is strictly convex.
\end{proof}

\begin{teo}\label{strict}
If the distance in $G$ is strictly convex, then  $S(x,y)=0$ implies $[x,y]=0$.
\end{teo}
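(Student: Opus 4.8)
The strategy is to reduce to the compact semisimple factor and then exploit the strict convexity of the norm together with the structural results of Theorem~\ref{teoNbis}. First I would invoke Lemma~\ref{reduz}: since $S(x,y)=0$, writing $x=x_c+x_k$, $y=y_c+y_k$ with $x_c,y_c\in Z(\g)$ and $x_k,y_k\in\s$, we get $S(x_k,y_k)=0$. Because $[x,y]=0$ is equivalent to $[x_k,y_k]=0$ (the central parts commute with everything), it suffices to prove the claim for $x,y\in\s$, so from now on assume $x,y\in\s$.

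Next, translate the curvature hypothesis into a statement about norming functionals. By Proposition~\ref{ese}, $S(x,y)=0$ with $x\ne y$ forces $\max_{\varphi\in N_{y-x}}\varphi([x,[x,y-x]])=0$, and since each such term is $\le 0$ by Lemma~\ref{desilie}, in fact $\varphi([x,[x,y-x]])=0$ for \emph{every} $\varphi$ norming $y-x$ (the case $x=y$ is trivial). Now use Lemma~\ref{strcodi}: strict convexity of the distance $\di$ is equivalent to strict convexity of the induced Finsler norm $|\cdot|$. In particular, for the vector $v:=y-x$, fix any norming functional $\varphi=\langle z,\cdot\rangle$; strict convexity means $F_\varphi=\{v\}$ (Definition~\ref{strictc}). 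Then apply item (3) of Theorem~\ref{teoNbis} with the role of ``$x$'' in that theorem played by our element $x$ (noting that $\varphi([x,[x,v]])=0$ is exactly the hypothesis of that theorem): we conclude $[x,v]=0$, i.e. $[x,y-x]=0$, which gives $[x,y]=0$.

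I should double-check the one technical point that Theorem~\ref{teoNbis} is being applied to the correct vector: the theorem concerns $\varphi$ norming $v$ and the condition $\varphi([x,[x,v]])=0$, concluding under $F_\varphi=\{v\}$ that $[x,v]=0$. Here $v=y-x$ and the condition $\varphi([x,[x,y-x]])=0$ is precisely what Proposition~\ref{ese} delivered; since $x\in\s$ we may take $x\in\s$ as required in the proof of that theorem. Strict convexity makes $F_\varphi$ a singleton for every norming $\varphi$, so the hypothesis $F_\varphi=\{v\}$ holds. Hence $[x,y-x]=0$ and therefore $[x,y]=0$.

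\textbf{Main obstacle.} The only genuine subtlety is making sure the reductions chain together cleanly: that vanishing of $S(x,y)$ really does pass to $\varphi([x,[x,y-x]])=0$ for \emph{all} norming $\varphi$ (not just the maximizing one) — this uses the sign information from Lemma~\ref{desilie}, so that a max of non-positive quantities being zero forces each to be zero — and that the ``$x$'' in the conclusion $[x,v]=0$ can be promoted from a statement about $P_{v_1}x$ (as in the first line of Theorem~\ref{teoNbis}) to $x$ itself, which is exactly what item (3) of that theorem provides under the singleton-face hypothesis. Everything else is a direct citation of the preceding lemmas.
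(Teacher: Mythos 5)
Your overall plan coincides with the paper's proof: reduce to the compact semisimple factor via Lemma~\ref{reduz}, translate $S(x_k,y_k)=0$ into a vanishing condition on norming functionals via Proposition~\ref{ese}, transfer strict convexity of $\di$ to strict convexity of $|\cdot|$ via Lemma~\ref{strcodi}, observe $F_\varphi=\{y_k-x_k\}$, and finish with Theorem~\ref{teoNbis}(3). That is exactly the route the paper takes.

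There is, however, a genuine logical slip in the step you flag as the main subtlety. You assert that ``a max of non-positive quantities being zero forces each to be zero,'' and hence that $\varphi([x,[x,y-x]])=0$ for \emph{every} $\varphi\in N_{y-x}$. This implication is false: from $\max$ equal to $0$ and each term $\le 0$ you only get that the maximum is attained at some $\varphi_0\in N_{y-x}$, i.e.\ that \emph{some} norming functional annihilates $[x,[x,y-x]]$ (consider the set $\{-1,0\}$ for a counterexample to the universal claim). This also undermines your phrase ``fix \emph{any} norming functional,'' because the $\varphi$ you then feed into Theorem~\ref{teoNbis}(3) must be one for which $\varphi([x,[x,y-x]])=0$ holds. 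The repair is immediate and is what the paper actually does: choose $\varphi_0\in N_{y_k-x_k}$ achieving the (zero) maximum — it exists because $N_{y_k-x_k}$ is compact and $\varphi\mapsto\varphi([x_k,[x_k,y_k-x_k]])$ is continuous — note that strict convexity of the norm gives $F_{\varphi_0}=\{y_k-x_k\}$ for this particular $\varphi_0$ (indeed for every norming functional), and then Theorem~\ref{teoNbis}(3) yields $[x_k,y_k-x_k]=0$, hence $[x,y]=0$. With that correction your argument is complete and agrees with the paper's.
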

\begin{proof}
Writing $x=x_c+x_k$,$y=y_c+y_k$, by Lemma \ref{reduz} we see that $S(x_k,y_k)=0$. Let $\varphi=\langle z,\cdot\rangle$ be a functional in $\s'$ norming $y_k-x_k$ such that $\varphi([x_k,[x_k,y_k-x_k]])=0$ (Proposition \ref{ese}). Since the norm is strictly convex, it is also strictly convex restricted to $\s$, and by Remark \ref{strictc} it must be $F_{\varphi}=\{y_k-x_k\}$. By Theorem \ref{teoNbis} we see that $[x_k,y_k]=0$, but then we conclude that it must be $[x,y]=0$.
\end{proof}

So for strictly convex norms, $S(x,y)=0$ implies that the plane generated by $x,y$ is flat, i.e.
\[
\di(e^{s_1 x+t_1y},e^{s_2x+t_2y})=\di(1, e^{(s_2-s_1)x+(t_2-t_1)y})=|(s_2-s_1)x+(t_2-t_1)y|,
\]
as long as $(s_2-s_1)x+(t_2-t_1)y\in  B$, where $B$ is the ball in $\g$ such that $\exp|_B$ is a diffeomorphism onto its image (Corollary \ref{corodimenor}).

For non-strictly convex norms, the situation is much more interesting. In what follows we will discuss this, beginning with the following:

\begin{rem}\label{deriva}For given $x,y\in\g$, let $Lie(x,y)$ denote the closed Lie algebra generated by $x,y$, i.e., the smallest closed Lie subalgebra of $\g$ containing $x,y$. If $z$ commutes with $x,y$, then by means of the Jacobi identity we also obtain $[z,w]=0$ for any $w\in Lie(x,y)$ (thinking of $[z,\cdot]=\ad z$ as a derivation). 
\end{rem}

Recall  $F_{\varphi}$ is the exposed face given by any unit norm $\varphi$, and $C_{\varphi}$ is the cone generated by that exposed face (Definition \ref{caras}).

\begin{coro}\label{flat} If $x,y\in\g$ are sufficiently small and $x,y\in C_{\varphi}$, then
\[
\di(e^{x},e^{x+y})=|y|.
\]
\end{coro}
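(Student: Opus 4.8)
The plan is to combine the upper bound $\di(e^x,e^{x+y})\le|y|$ coming from Corollary~\ref{corodimenor} with a matching lower bound obtained by calibrating against the functional $\varphi$. First I would record what the hypothesis buys us: by Definition~\ref{caras}, $x,y\in C_\varphi$ means that $\varphi\in\g'$ has unit norm and $\varphi(x)=|x|$, $\varphi(y)=|y|$, i.e. $\varphi$ norms both $x$ and $y$ simultaneously. For $x,y$ small enough, $-x$ and $x+y$ lie in the ball $B$ of Corollary~\ref{corodimenor} on which $\exp$ is a diffeomorphism and the Baker--Campbell--Hausdorff series converges to an element of $B$, so that by left-invariance of $\di$ and Corollary~\ref{corodimenor},
\[
\di(e^x,e^{x+y})=\di(1,e^{-x}e^{x+y})=|BCH(-x,x+y)|,
\]
while the upper bound $\di(e^x,e^{x+y})\le|y|$ is exactly the inequality \eqref{divsnorm} of that corollary. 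So everything reduces to proving $|BCH(-x,x+y)|\ge|y|$.

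For this lower bound I would expand $BCH(-x,x+y)=(-x)+(x+y)+\zeta=y+\zeta$, where $\zeta$ collects the terms of degree $\ge 2$. Each such term is an iterated Lie bracket in $-x$ and $x+y$, hence by multilinearity a linear combination of iterated brackets in $x$ and $y$; using the Jacobi identity and skew-symmetry one rewrites every such bracket of degree $k\ge2$ as a linear combination of right-normed brackets $[a_1,[a_2,[\dots,[a_{k-1},a_k]\dots]]]$ with all $a_i\in\{x,y\}$. Since $\varphi$ norms both $x$ and $y$, Lemma~\ref{desilie} gives $\varphi\circ\ad x=0=\varphi\circ\ad y$, so $\varphi$ annihilates every right-normed term through its outermost bracket; hence $\varphi(\zeta)=0$ and $\varphi\bigl(BCH(-x,x+y)\bigr)=\varphi(y)=|y|$. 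As $\|\varphi\|=1$ this yields $|BCH(-x,x+y)|\ge\varphi\bigl(BCH(-x,x+y)\bigr)=|y|$, i.e. $\di(e^x,e^{x+y})\ge|y|$, and together with the upper bound we conclude $\di(e^x,e^{x+y})=|y|$.

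The main obstacle is precisely the claim that $\varphi$ kills $\zeta$: it rests on the structural fact that every BCH term of degree $\ge2$ is an iterated bracket, that any iterated bracket in two generators reduces to right-normed ones, and that Lemma~\ref{desilie} forces $\varphi$ to vanish on $\ran(\ad x)$ and $\ran(\ad y)$ at once --- this is where $x,y$ lying in the \emph{same} cone $C_\varphi$ is essential (not merely that each lies in some face). A minor technical point is fixing how small $x,y$ must be so that Corollary~\ref{corodimenor} applies to $-x$, $x+y$ and $BCH(-x,x+y)$ simultaneously, which is routine. An alternative route for the lower bound, bypassing BCH bookkeeping, would be a direct calibration: for any rectifiable $\gamma$ from $e^x$ to $e^{x+y}$ write $\LU(\gamma)=\int|L_\gamma^{-1}\gamma'|\ge\int\varphi(L_\gamma^{-1}\gamma')$ and argue the latter integral equals $|y|$ using that $\varphi$ annihilates brackets of $x$ and $y$; but the BCH computation is cleaner here, since Corollary~\ref{corodimenor} already identifies $\di(e^x,e^{x+y})$ with a single norm.
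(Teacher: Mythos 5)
Your proposal is correct and follows essentially the same strategy as the paper: the upper bound is \eqref{divsnorm}, and the matching lower bound comes from calibrating against $\varphi$ after observing that $\varphi$ annihilates every BCH term of degree $\ge 2$ because $\varphi\circ\ad x=\varphi\circ\ad y=0$ (this is where $x,y$ lying in the \emph{same} cone $C_\varphi$ is used). The only cosmetic difference is that you reduce the remainder $\zeta$ to right-normed brackets and invoke Lemma~\ref{desilie} directly, whereas the paper groups the Dynkin terms as $[y,f]+[x,\tilde g]$ and uses the representation $\varphi=\langle z,\cdot\rangle$ together with $[z,x]=[z,y]=0$; the two are interchangeable by Remark~\ref{normif}.
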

\begin{proof}
If $x+y,x$ are sufficiently small so the BCH series converges, in particular $x+y,x\in B$ (Corollary \ref{corodimenor}) we have $B(x+y,-x)=x+y-x+[x+y,f]+[x,g]=y+[y,f]+[x,\tilde{g}]$ for certain elements $f,g,\tilde{g}\in Lie(x,y)$ by Dynkin's formula. Then 
\begin{align*}
|y|& =|x+y-x|\ge \di(1,e^{x+y}e^{-x})=|B(x+y,-x)|\ge \varphi(B(x+y,-x))\\
& =|y|+ \langle z,[y,f]+ [x,\tilde{g}]\rangle=|y|+ \langle [z,y],f\rangle + \langle [z, x],\tilde{g}\rangle =|y|+0+0=|y|
\end{align*}
where we used \eqref{killcyc} and the previous remark (together with the fact that $\varphi$ norms $x,y$, therefore $z$ commutes with both $x,y$). 
\end{proof}


\begin{teo}[Flat sections]\label{fs} Let $x,y\in\g$. Consider the statements.
\begin{enumerate}
\item [(1)] $\varphi([x,[x,y]])=0$ for any $\varphi$ norming $y-x$.
\item[(2)] For sufficiently small $s> 0$,  the path $s^{-1}B(sy,-sx)$ is inside some exposed face $F_{\varphi}$ of the sphere of radius $|y-x|$ containing $y-x$.
\item[(3)] For sufficiently small $s\ge 0$, $\di(e^{sx},e^{sy})=s|y-x|$.
\item[(4)] $S(x,y)=0$.
\item[(5)] $\varphi([x,[x,y]])=0$ for some $\varphi$ norming $y-x$.
\end{enumerate}
Then $(1)\Leftrightarrow (2)\Leftrightarrow  (3) \Rightarrow (4)\Leftrightarrow (5)$. Moreover
\begin{enumerate}
\item[a)] If there exists only one norming functional of $y-x$ (the norm is smooth at $y-x$), then all the conditions are equivalent.
\item[b)] If in $(5)$ we have $F_{\varphi}=\{y-x\}$ (in particular, if the norm is strictly convex), then all the conditions are equivalent to $[x,y]=0$.
\end{enumerate}
\end{teo}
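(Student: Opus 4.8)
The plan is to prove the cycle of equivalences $(1)\Leftrightarrow(2)\Leftrightarrow(3)$ first, then derive the one-way implication $(3)\Rightarrow(4)$ and finally close the loop $(4)\Leftrightarrow(5)$, after which the sharpenings (a) and (b) will fall out of Theorem~\ref{teoNbis} and Corollary~\ref{flat}. Throughout I would set $v=y-x$ and freely use that the problem is invariant under replacing $\g$ by $\s$ (via Remark~\ref{lasum} together with Lemma~\ref{reduz}, since if $x$ or $y$ is central then $[x,y]=0$ and every statement holds trivially), so that the root-space machinery of Remarks~\ref{rootdecom}--\ref{perm} and Theorem~\ref{teoNbis} is available. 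A functional $\varphi=\langle z,\cdot\rangle$ norming $v$ satisfies $[z,v]=0$ (Remark~\ref{normif}), and the key quantitative identity is equation~\eqref{menorigual}: if $v,z$ lie in a common Cartan subalgebra and $w\in\s$, then $\varphi([w,[w,v]])=-\sum_{\alpha\in\sop(v)\cap\sop(z)}\alpha(z)\alpha(v)(a_\alpha^2+b_\alpha^2)\le 0$, each summand being $\le 0$ by \eqref{moig}. Note that $[x,[x,v]]=[x,[x,y]]$ since $[x,[x,x]]=0$, so condition (1) is literally $\varphi([x,[x,y-x]])=0$ for every norming $\varphi$.

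\textbf{Proof of $(1)\Leftrightarrow(3)$ and $(1)\Leftrightarrow(2)$.} For $(1)\Rightarrow(3)$: by Proposition~\ref{ese}, $S(x,y)=-\tfrac{|v|}{4}\max_{\varphi\in N_v}\varphi([x,[x,v]])$, so (1) forces $S(x,y)=0$; but this is weaker than (3), so I would argue directly. Fix any $\varphi=\langle z,\cdot\rangle$ norming $v$ and adapted to $v$ (Lemma~\ref{adaptada}). By Theorem~\ref{teoNbis}(1), $\varphi([x,[x,v]])=0$ together with adaptedness gives $[x,z]=0$; since also $[x,v]$ need not vanish we use instead the face description: $x\in C_\varphi$ because $\varphi([x,[x,v]])=0$ implies (through \eqref{laposta} and the adapted hypothesis) that $x$ commutes with $z$, hence $\varphi$ norms $x$ too, and likewise $\varphi$ norms $y=v+x$. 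Thus $x,y\in C_\varphi$, and Corollary~\ref{flat} applied to the pair $(sx,sy-sx)=(sx,sv)$ (both small) yields $\di(e^{sx},e^{sy})=|sv|=s|y-x|$, which is (3). For $(3)\Rightarrow(1)$: if $\di(e^{sx},e^{sy})=s|v|$ for small $s>0$ then from Remark~\ref{bch} and Lemma~\ref{lemaB}, the path $s\mapsto s^{-1}B(sy,-sx)=v+\tfrac{s}{48}[x+y,[x,y]]+\tfrac{s}{16}[v,[x,y]]+O(s^{3/2})$ has constant norm $|v|$ for small $s$; hence for every $\varphi\in N_v$, the chain rule for subdifferentials (Remark~\ref{chainrule}) forces $\varphi$ applied to the $s$-coefficient to be $\le 0$ and $\ge 0$, so $=0$; since $\varphi\circ\ad_v=0$ the $[v,[x,y]]$ term drops and we get $\varphi([x+y,[x,y]])=0$, equivalently (doubling and using $\varphi\circ\ad_v=0$ again as in Proposition~\ref{ese}) $\varphi([x,[x,y]])=0$. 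This is (1). The equivalence $(1)\Leftrightarrow(2)$ is essentially the same computation: (2) says precisely that the constant-speed polynomial path $s^{-1}B(sy,-sx)$ stays in a face $F_\varphi$ containing $v$, i.e. stays in $C_\varphi$; a path in $C_\varphi$ has $\varphi$ equal to the norm all along, so differentiating at $s=0$ gives (1); conversely (1) plus $x,y\in C_\varphi$ (derived above) plus $\varphi\circ\ad_v=0$ shows every term of the BCH series of $B(sy,-sx)$ beyond $sv$ is killed by $\varphi$ up to the norm (using Dynkin's formula as in the proof of Corollary~\ref{flat}), so the whole path lies in $C_\varphi$, giving (2).

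\textbf{Proof of $(3)\Rightarrow(4)$ and $(4)\Leftrightarrow(5)$.} $(3)\Rightarrow(4)$ is immediate from Definition~\ref{defS}: if $\di(e^{rx},e^{ry})=r|y-x|$ for small $r>0$ then the numerator $r|y-x|-\di(e^{rx},e^{ry})$ vanishes identically, so $S(x,y)=0$. For $(4)\Leftrightarrow(5)$: by Proposition~\ref{ese}, $S(x,y)=-\tfrac{|v|}{4}\max_{\varphi\in N_v}\varphi([x,[x,v]])$ and each term $\varphi([x,[x,v]])\le 0$ by Lemma~\ref{desilie}; hence $S(x,y)=0$ iff the maximum over $N_v$ is $0$ iff there exists $\varphi\in N_v$ with $\varphi([x,[x,v]])=0$, which is exactly (5) (again $[x,[x,v]]=[x,[x,y]]$). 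This also shows why $(5)\not\Rightarrow(1)$ in general: (5) only needs one good functional, (1) needs all of them.

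\textbf{The sharpenings.} (a) If the norm is smooth at $v=y-x$ then $N_v$ is a singleton (Remark~\ref{convesmooth}(2)), so ``for all $\varphi$'' and ``for some $\varphi$'' coincide, collapsing (1) and (5); hence all five are equivalent. (b) Suppose in (5) the witnessing $\varphi$ has $F_\varphi=\{v\}$ (automatic when the norm is strictly convex, by Definition~\ref{strictc}). By Theorem~\ref{teoNbis}(3), $\varphi([x,[x,v]])=0$ together with $F_\varphi=\{v\}$ gives $[x,v]=0$, i.e. $[x,y]=0$; conversely $[x,y]=0$ trivially implies $\varphi([x,[x,y]])=0$ for every $\varphi$, so all conditions hold. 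This recovers Theorem~\ref{strict} as a special case.

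The main obstacle I anticipate is the step inside $(1)\Rightarrow(2)/(3)$ where one must pass from the \emph{infinitesimal} identity $\varphi([x,[x,v]])=0$ (which a priori controls only the third-order term of the BCH series) to the \emph{global} conclusion that the entire BCH path $s\mapsto B(sy,-sx)$ lies in the cone $C_\varphi$ for all small $s$. The resolution is to first upgrade (1) to the algebraic statement $[x,z]=0$ and $[y,z]=0$ — using adaptedness of a suitably chosen norming functional and the explicit root-space formula \eqref{laposta} in Theorem~\ref{teoNbis} — and then invoke Remark~\ref{deriva}: since $z$ commutes with $x$ and $y$ it commutes with every element of $Lie(x,y)$, in particular with every bracket monomial appearing in Dynkin's formula for $B(sy,-sx)$; consequently $\varphi(B(sy,-sx))=|B(sy,-sx)|$, placing the whole path in $C_\varphi$. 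One must be slightly careful that the norming functional produced by Lemma~\ref{adaptada} is adapted to $v=y-x$ (not to $x$ or $y$), and that the common Cartan subalgebra can be chosen to contain $z_1,v_1$; both are guaranteed by Remark~\ref{normif} and Lemma~\ref{adaptada}. The remaining estimates are routine applications of Lemma~\ref{lemaB}, the reverse triangle inequality, and the subdifferential chain rule.
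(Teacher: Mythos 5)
Your overall architecture for the cycle of implications matches the paper, and the treatment of $(1)\Rightarrow(2)$, $(2)\Rightarrow(3)$, $(3)\Rightarrow(4)$, $(4)\Leftrightarrow(5)$, and items (a), (b) is sound — the final ``obstacle'' paragraph even corrects your earlier misstep where you inferred ``$\varphi$ norms $x$'' from ``$[x,z]=0$'' (this is false: $[x,z]=0$ gives no control on $\varphi(x)$; try $x=-v$). But there is a genuine gap in the direction $(3)\Rightarrow(1)$, and it is the crux of the theorem.

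Your argument reads off the first-order expansion of $s\mapsto s^{-1}B(sy,-sx)$ (incidentally you have written the $t=1/2$ expansion $b(\cdot)$ from \eqref{labe}, not the $t=1$ expansion of $s^{-1}B(sy,-sx)$, which is $v+\tfrac{s}{2}[x,y]+\tfrac{s^2}{12}\bigl([x,[x,y]]+[y,[x,y]]\bigr)+O(s^3)$ — the two are different vectors, only their norms coincide) and then invokes the subdifferential chain rule of Remark~\ref{chainrule} to conclude that $\varphi$ applied to the first-order coefficient vanishes for \emph{every} $\varphi\in N_v$. But Remark~\ref{chainrule} yields only $\lim_{s\to0^+}(|b(s)|-|b(0)|)/s=\max_{\varphi\in N_v}\varphi(b'(0))$. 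Combined with $\varphi(b'(0))\le 0$ for all $\varphi\in N_v$ (Lemma~\ref{desilie}), the constancy of $|b(s)|$ gives you $\max_{\varphi\in N_v}\varphi(b'(0))=0$, i.e. that \emph{some} norming functional kills the coefficient — this is precisely condition $(5)$, not $(1)$. Your ``$\le 0$ and $\ge 0$'' is not justified: the $\ge 0$ half would require a lower (min) bound from the left derivative, but the path is a priori only defined for $s\ge 0$, and in any case the min/max bracket establishes the two-sided sandwich only for the extremal functionals, not pointwise. That the gap is real is exactly what the $U(3)$ example after the theorem exhibits: there are $x,y$ and two functionals $\varphi,\varphi_0$ norming $y-x$ with $\varphi([x,[x,y-x]])=0$ but $\varphi_0([x,[x,y-x]])\ne 0$, so $(5)$ holds while $(1)$ fails. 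The paper closes this direction by appealing to Theorem~4.22(3) of \cite{lar19}: for a length-minimising curve $\gamma_s=e^{\Gamma_s}$ with $\Gamma_s=B(sy,-sx)$, one has $\varphi(\Gamma_s)=|\Gamma_s|$ for \emph{every} $\varphi\in N_{y-x}$ and all $s\in[0,s_0]$; expanding $\varphi(\Gamma_s)=s|y-x|$ to third order then kills $\varphi([x,[x,y]])$ for all such $\varphi$. Without this structural result about geodesics and faces, a first-order chain-rule argument cannot reach $(1)$.
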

\begin{proof}
Assume $(1)$ and consider $\varphi=\langle z , \cdot \rangle$ adapted to $y-x$, which exists by Lemma \ref{adaptada}, then by Theorem \ref{teoNbis}, $[z,x]=0$. Since we also have $[z,y-x]=0$, we see that $[z,y]=0$. Hence $z$ commutes with every element of $Lie(x,y)$ (Remark \ref{deriva}), and arguing as in Corollary \ref{flat}, for small $s$ we conclude that
\[
s\varphi(y-x)=s|y-x| = |B(sy,-sx)| = \varphi(B(sy,-sx)).
\]
Dividing by $s$ it is clear that $(2)$ holds.

If $s^{-1}B(sy,-sx)$ is inside some exposed face of the sphere of radius $|y-x|$ containing $y-x$, we have $\varphi$ norming $y-x$ such that
\[
\varphi(s^{-1}B(sy,-sx))=|s^{-1}B(sy,-sx)|=|y-x|=\varphi(y-x),
\]
and since $\di(e^{sx},e^{sy})=|B(sy,-sx)|$, then $(3)$ holds. If $(3)$ holds, first we show that $(2)$ holds. To this end, consider $\gamma(s)=e^{sy}e^{-sx}=e^{B(sy,-sx)}$ for sufficiently small $s\in [0,s_0]$ where $(2)$ holds. Note that $\gamma$ joins $1, e^z$ with $z=B(s_0y,-s_0x)$, and its length is $s_0|y-x|$. By the hypothesis $(2)$ and Corollary  \ref{corodimenor}, if $s_0$ is small enough
\[
|z|=|B(s_0y,-s_0x)|=\di(e^{s_0x},e^{s_0y})=s_0|y-x|=L_0^{s_0}(\gamma).
\]
Then by Theorem 4.22(3) in \cite{lar19}, naming $\Gamma_s=B(sy,-sx)$, since $e^{\Gamma_s}=\gamma_s$, there exists $\varphi$ of unit norm such that 
\[
\varphi(B(sy,-sx))=|B(sy,-sx)| \quad \forall\, s\in [0,s_0].
\]
Then again by hypothesis (3) we have
\[
\varphi(B(sy,-sx))=|B(sy,-sx)|=\di(e^{sx},e^{sy})=s|y-x| 
\]
for $s\in [0,s_0]$, and this proves $(2)$. Note that also by  Theorem 4.22(3) in \cite{lar19} the last equality holds, in fact, for any norming functional $\varphi$ of $y-x$. If we compute the third lateral derivate with respect to $s\geq 0$ and then put $s=0$ it follows that $\varphi([x,[x,y]])=0$, which proves that in fact $(3)$ implies $(1)$. 

 Now assume that $(3)$ holds, it is clear from the very definition of $S$ that $S(x,y)=0$, so $(4)$ holds. From Proposition \ref{ese}  we see the equivalence of $(4)$ and $(5)$.

Now assume that the norm of $\g$ is smooth at $y-x$. Since $S(x,y)=0$, for the unique functional $\varphi=\langle z,\cdot\rangle$ norming $y-x$ we must have $\varphi([x,[x,y-x]])=\varphi([x,[x,y]])=0$. By Lemma \ref{adaptada}, $\varphi$ is adapted to $y-x$, and then all conditions are equivalent. Finally, if condition (5) holds for some norming $\varphi$ with $F_{\varphi}=\{y-x\}$, then $x,y-x$ commute because of Theorem \ref{teoNbis}, and then $[x,y]=0$. 
\end{proof}

\begin{rem} If $G=SU(2)$, and the eigenvalues of $y-x$ are equal then $y-x=\lambda 1$, hence  $[x,y]=[x,y-x]=0$ and all the conditions of the previous theorem are equivalent. Otherwise $y-x$ is regular and again all the conditions of the previous theorem are equivalent. This indicates a miscalculation in \cite[Example 4.4]{cocoeste}; that $\rho$ has $\|\rho\|_1=2.8\ne 1$.
\end{rem}

\begin{ejem}
Let $G=U(3)$, and denote $\langle x,y\rangle=\tr(xy^*)=-\tr(xy)$ which is (a constant multiple of) the opposite Killing form in $\mathfrak{su}(3)$. Consider $x,y,z\in \mathfrak{u}(3)$:
\[
x=\left(\begin{array}{ccc}
0 & 1 & 0  \\
-1 & 0 &  1  \\
0 & -1 & 0  
\end{array}  \right),
\quad y=\left(\begin{array}{ccc}
i & 1 & 0 \\
-1 & i & 1\\
0 & -1 & 0 \\
\end{array}  \right),
\quad v=y-x=i\left(\begin{array}{ccc}
1 & 0 & 0 \\
0 & 1 & 0 \\
0 & 0 & 0
\end{array}  \right),
\]
\[
z=i\left(\begin{array}{cccc}
1 & 0 & 0 \\
0 & 0 & 0  \\
0 & 0 &  0
\end{array}  \right), \qquad z_0=i\left(\begin{array}{cccc}
1/2 & 0 & 0 \\
0 & 1/2 & 0  \\
0 & 0 &  0
\end{array}  \right),\qquad  \varphi=\langle z,\cdot \rangle,\quad \varphi_0=\langle z_0,\cdot \rangle \in \mathfrak{u}(3)'.
\]
We put in $\mathfrak{u}(3)$ the $\Ad$-invariant norm $|v|=\|v\|_{\infty}=\max\{|\lambda_i(v)|\}$ (the spectral norm). It is well known that its dual norm is the trace norm $\|z\|_1=\tr|z|=\sum_i |\lambda_i(z)|$. Then we have $\|v\|_{\infty}=1$, $\|\varphi\|=\|z\|_1=1=\|\varphi_0\|=\|z_0\|_1$ and
\[
\varphi(v)=\langle z,v\rangle=1=\|v\|_{\infty}=|v|=\varphi_0(v),
\]
therefore $\varphi$ and $\varphi_0$ norm $y-x$. It is also clear that $\varphi_0$ is adapted to $y-x$. Now $P_vx$ is the co-diagonal part of $x$, described as
\[
P_v x=\left(\begin{array}{ccc}
0 & 0 & 0  \\
0 & 0 &  1  \\
0 & -1 & 0  
\end{array}  \right),
\]
since the other part of $x$ commutes with $v$. A straightforward computation shows that $[P_vx,z]=0$ and that $[P_vx,z_0]\ne 0$, therefore by Theorem \ref{teoNbis} we have 
\[
\varphi([x,[x,v]])=\varphi([x,[x,y-x]])=0\quad \textrm{ while }\quad \varphi_0([x,[x,y-x]])\ne 0.
\]

This proves that in general the conditions $(1)$, $(2)$ and $(3)$ from the last theorem are not equivalent to the conditions $(4)$ and $(5)$.

\end{ejem}

\subsubsection{Sectional curvature}\label{sec:curvat}

We now discuss in more detail the possibility of defining a notion of metric curvature for a $2$-plane in $\g$.

\begin{rem} In the Riemannian setting, if we consider a 2-plane $\pi=span\{x,y\}\subseteq \g$ the sectional curvature is given by
\[\sec(\pi)=\frac{\langle R(x,y)y,x\rangle}{A^2(x,y)}=\frac{S(x,y)}{A^2(x,y)},\]   
where $R$ is the curvature tensor and $A^2(x,y)= \langle x,x \rangle \langle y,y \rangle - \langle x , y \rangle ^2$ is the squared area of the parallelogram generated by $x$ and $y$. Since the sectional curvature does not depend on the basis choosen, it's easy to see that 
\[\sec(\pi)=\max\limits_{x,y\in\pi, |x|=|y|=1} S(x,x+y),\]
which provides an expression that eliminates the dependence on the area.
\end{rem}

\begin{defi}[Sectional curvature of a $2$-plane $\pi\subset \g$] Let $|\cdot|$ be any Finsler $\Ad$-invariant norm in $\g$, let $\pi$ be $2$-plane in $\g$. We define
\[
\sec(\pi)=\max\limits_{x,y\in\pi, |x|=|y|=1} S(x,x+y)=\nicefrac{1}{4}\max\limits_{|x|=|y|=1}\min\limits_{\varphi\in N_y}\varphi([x,[y,x]])
\]
which is non-negative by Proposition \ref{ese}.
\end{defi}

\begin{teo}\label{flats} For any $2$-plane $\pi\subset \g$ we have
\begin{enumerate}
\item $\sec(\pi)=0$ if and only if $S(x,y)=0$ for any $x,y\in\pi$.
\item With the normalization $|[x,y]|\le  2|x|\,|y|$ we  have $0\le  \sec(\pi)\le 1$.
\end{enumerate}  
\end{teo}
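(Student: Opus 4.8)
The plan is to derive both statements formally from Proposition~\ref{ese} — which gives $S(a,b)=-\tfrac{|b-a|}{4}\max_{\varphi\in N_{b-a}}\varphi([a,[a,b-a]])\ge 0$ — together with the scaling behaviour of $S$. The only preliminary fact I would record is the homogeneity of the map $(a,v)\mapsto S(a,a+v)$: since $[\lambda a,[\lambda a,\mu v]]=\lambda^2\mu\,[a,[a,v]]$, $|\mu v|=\mu|v|$, and $N_{\mu v}=N_v$ for $\mu>0$ (Definition~\ref{dualnorm}), one gets $S(\lambda a,\lambda a+\mu v)=\lambda^2\mu^2\,S(a,a+v)$ for all $\lambda\in\mathbb R$ and $\mu>0$.

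For part~(1), the ``if'' direction is immediate: if $S$ vanishes on $\pi\times\pi$ then in particular $S(x,x+y)=0$ for all unit $x,y\in\pi$, so $\sec(\pi)=0$. For the converse, assume $\sec(\pi)=0$ and take $a,b\in\pi$ arbitrary. The cases $a=b$ and $a=0$ give $S(a,b)=0$ at once from the displayed formula; otherwise set $v=b-a\in\pi$, $\hat a=a/|a|$, $\hat v=v/|v|$, which lie in $\pi$ because $\pi$ is a linear subspace, and the homogeneity above yields $S(a,b)=|a|^2|v|^2\,S(\hat a,\hat a+\hat v)$. Since $\hat a,\hat v$ are unit vectors in $\pi$, the definition of $\sec(\pi)$ and Proposition~\ref{ese} force $0\le S(\hat a,\hat a+\hat v)\le\sec(\pi)=0$, hence $S(a,b)=0$. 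Thus the nontrivial direction of~(1) rests only on homogeneity and linearity of $\pi$.

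For part~(2), $\sec(\pi)\ge 0$ is immediate from Proposition~\ref{ese}. For the upper bound I would use the rewriting of $\sec(\pi)$ already in its definition, $\sec(\pi)=\tfrac14\max_{|x|=|y|=1}\min_{\varphi\in N_y}\varphi([x,[y,x]])$, and estimate the inner term: any $\varphi\in N_y$ has $\|\varphi\|=1$, so $\varphi(w)\le|w|$ for all $w$, whence $\min_{\varphi\in N_y}\varphi([x,[y,x]])\le|[x,[y,x]]|$. Applying the normalization $|[a,b]|\le 2|a|\,|b|$ twice gives $|[x,[y,x]]|\le 2|x|\,|[y,x]|\le 4|x|^2|y|=4$ when $|x|=|y|=1$, so every term is $\le 4$ and therefore $\sec(\pi)\le 1$. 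Neither bound requires compactness or semicontinuity of $(x,y)\mapsto S(x,x+y)$.

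I do not expect a genuine obstacle here: the argument is entirely formal once Proposition~\ref{ese} is available. The two points that merit a little care are the degree-$(2,2)$ homogeneity bookkeeping for $(a,v)\mapsto S(a,a+v)$ — in particular that it is $N_{\mu v}=N_v$ (valid for $\mu>0$) that is being used, not a statement about $N_{-v}$ — and, in part~(2), applying the submultiplicative normalization to both of the nested brackets so as to recover exactly the constant $1$.
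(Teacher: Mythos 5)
Your proof is correct, and it follows essentially the same route as the paper: both proofs reduce to the formula in Proposition~\ref{ese}, then normalize the vectors in $\pi$ (you via the explicit degree-$(2,2)$ homogeneity of $(a,v)\mapsto S(a,a+v)$, the paper via passing to $x_0=x/|x|$, $y_0=y/|y|$ and invoking bilinearity), and the bound $\sec(\pi)\le 1$ is obtained identically by applying $|[a,b]|\le 2|a||b|$ twice.
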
 
\begin{proof}
If $\sec(\pi)=0$, since $\varphi([x,[y,x]])\ge 0$ when $\varphi\in N_y$ (for any unit norm $x,y\in \pi$), then it must be that $\min\limits_{\varphi\in N_y}\varphi([x,[y,x]])=0$ for all $|x|=|y|=1$. We claim that for  $x,y\ne 0$ in $\pi$, there exists $\varphi$ norming $y$ such that $\varphi([x,[x,y]])=0$: indeed, let  $y_0=\frac{y}{|y|}$ and likewise $x_0=\frac{x}{|x|}$,  then there exists $\varphi$ norming $y_0$ such that $\varphi([x_0,[x_0,y_0]])=0$. But this $\varphi$ also norms $y=|y|y_0$, and by bi-linearity in $x$ we have the claim, and then $S(x,y)=0$ for any pair of $x,y\in \pi$. 

If $S(x,y)=0$ for all $x,y\in \pi$ then clearly $\sec(\pi)=0$. Conversely, let $U=x$, $V=x+y$, we have that $\varphi$ norms $V-U$ and $\varphi([U,[U,V-U]])=0$, thus by Theorem \ref{ese}, we have $S(U,V)=0$. Since these are arbitrary vectors in $\pi$ the first claim of the theorem is proved. To prove the second assertion, note that with the chosen normalization we have 
\begin{align*}
\sec(\pi)&=\frac{1}{4}\max\limits_{|x|=|y|=1}\min\limits_{\varphi\in N_y}\varphi([x,[y,x]])\leq \frac{1}{4}\max\limits_{|x|=|y|=1}|[x,[y,x]]|\\
&\leq\frac{1}{4}\max\limits_{|x|=|y|=1}4|x||y||x|=1.\qedhere
\end{align*}
\end{proof}

\begin{coro}\label{secvsnorm}
If $\sec(\pi)=0$ and the unit ball of $\g$ has a strictly convex point that lies in $\pi$, then $\pi$ is abelian.
\end{coro}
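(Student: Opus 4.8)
The plan is to reduce the statement to case~(b) of Theorem~\ref{fs}. First I would invoke Theorem~\ref{flats}(1): the hypothesis $\sec(\pi)=0$ is equivalent to $S(x,y)=0$ for \emph{every} pair $x,y\in\pi$. Let $v\in\pi$ be the given strictly convex point, so by Definition~\ref{strictc}(2) there is $\varphi_0\in N_v$ with $F_{\varphi_0}=\{v\}$, and fix $p\in\pi$ with $\pi=\mathrm{span}\{v,p\}$. Since the bracket is bilinear and $[v,v]=0$, the plane $\pi$ is abelian as soon as $[p,v]=0$, so that is the goal.

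Next I would convert the vanishing of $S$ into a bracket condition. Writing $d=y-x$ for the direction of a pair, Proposition~\ref{ese} together with Lemma~\ref{desilie} (the identity $\varphi\circ\ad_d=0$ for any $\varphi$ norming $d$) forces, from $S(x,y)=0$ on all of $\pi$: for every direction $d\in\pi$ there is $\varphi\in N_d$ with $\varphi([p,[p,v]])=0$. The crucial point is that this uses \emph{all} of $\pi$, not merely the single pair $(p,p+v)$: already in $U(n)$ with the spectral norm one can have $S(p,p+v)=0$ with $[p,v]\ne 0$, so the whole plane is needed. Applied to $d=v$ this is exactly condition~(5) of Theorem~\ref{fs} for $(x,y)=(p,p+v)$, witnessed by some $\varphi_1\in N_v$. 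The heart of the argument is to upgrade this to the hypothesis of case~(b), i.e. to produce a norming functional of $v$ that kills $[p,[p,v]]$ \emph{and} exposes the face $\{v\}$. Here I would combine $\varphi_1$ convexly with $\varphi_0$: the functionals $\varphi_t=(1-t)\varphi_1+t\varphi_0$ lie in $N_v$ and satisfy $F_{\varphi_t}=F_{\varphi_1}\cap F_{\varphi_0}=\{v\}$ for $0<t<1$, so it is enough to show $\varphi_0([p,[p,v]])=0$ as well, that is, condition~(1) of Theorem~\ref{fs} for the pair $(p,p+v)$. To establish this I would pass to a Cartan subalgebra containing the $\mathfrak k$-parts of $v$, $\varphi_0$ and $\varphi_1$, decompose $p$ along the real root spaces, and combine the sign-definite identities \eqref{menorigual}, \eqref{moig}, \eqref{pxz}, the equality $\varphi_1([p,[p,v]])=0$, the support inclusion on roots coming from $F_{\varphi_0}=\{v\}$ (the lemma preceding Definition~\ref{adapted}, part~(2)), and the extra vanishings that $S=0$ furnishes along the nearby directions $v+\varepsilon p$; together these should pin down which root components of $p$ can survive and show that none do, i.e. $P_{v_1}p=0$ and hence $[p,v]=0$.

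Once a norming functional of $v$ with face $\{v\}$ that kills $[p,[p,v]]$ is available, Theorem~\ref{teoNbis}(3) --- equivalently case~(b) of Theorem~\ref{fs} --- gives $[p,v]=0$, hence $\pi$ is abelian. The step I expect to be the main obstacle is precisely this upgrade: vanishing of $S$ on a single pair is genuinely too weak (by the unitary/orthogonal examples with spectral or trace norm), so the proof must exploit that $S$ vanishes on the entire $2$-plane, and the delicate part is organising the real-root bookkeeping so that the perturbed directions $v+\varepsilon p$ really do force condition~(1) of Theorem~\ref{fs}.
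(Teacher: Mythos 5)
You correctly reduce the problem to Theorem~\ref{fs}(b) and correctly use Theorem~\ref{flats}(1) to pass from $\sec(\pi)=0$ to $S$ vanishing on every pair in $\pi$. You also flag a real subtlety that the paper's own three-line proof passes over silently: $V$ being a strictly convex point only guarantees the \emph{existence} of some $\varphi_0\in N_V$ with $F_{\varphi_0}=\{V\}$ (Definition~\ref{strictc}(2)), whereas condition~(5) --- extracted from $S(U,U+V)=0$ via Proposition~\ref{ese} --- only produces some, possibly different, $\varphi_1\in N_V$ annihilating $[U,[U,V]]$; Theorem~\ref{fs}(b) requires the \emph{same} functional to satisfy both. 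Noting this is to your credit.

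The problem is that your proposed repair does not close the gap. The convex combination $\varphi_t=(1-t)\varphi_1+t\varphi_0$, $t\in(0,1)$, does satisfy $F_{\varphi_t}=F_{\varphi_1}\cap F_{\varphi_0}=\{v\}$, but since $\varphi_1([p,[p,v]])=0$ you get $\varphi_t([p,[p,v]])=t\,\varphi_0([p,[p,v]])$, which is zero for $t\in(0,1)$ if and only if $\varphi_0([p,[p,v]])=0$ already --- exactly the fact you set out to establish, so the device is circular. The remaining step, showing $\varphi_0([p,[p,v]])=0$ by varying the direction $v+\varepsilon p$ and appealing to the real-root identities~\eqref{menorigual},~\eqref{moig},~\eqref{pxz}, is only sketched, and you concede yourself that it is the main obstacle; as written it is not a proof. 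There is also a small imprecision: the claim ``for every direction $d\in\pi$ there is $\varphi\in N_d$ with $\varphi([p,[p,v]])=0$'' needs $d$ to have a nonzero $v$-component (take $x=p$, $y=p+d$); for $d$ proportional to $p$ the relevant bracket $[p,[p,d]]$ vanishes identically and no information about $[p,[p,v]]$ is produced. By contrast, the paper simply takes the single pair $(U,U+V)$ and invokes Theorem~\ref{fs}(b) directly --- no sweep over the plane, no convex combinations, no root bookkeeping. Your proposal is thus substantially more elaborate than the paper's argument and, in the end, incomplete where it matters: you never produce a single functional that both annihilates $[p,[p,v]]$ and exposes $\{v\}$.
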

\begin{proof}
If $V$ is a point of strict convexity of the unit ball of $\g$, take $U\in\pi$ linearly independent and let $x=U$, $y=U+V$. Then  $S(x,y)=0$ and by Theorem \ref{fs} we have $[x,y]=0$, since $y-x=V$. Thus $\pi$ is abelian.
\end{proof}

\begin{rem}[The Riemannian case, revisited] In the Riemannian setting if $\dim(\g)=d$ and the normalization of the previous theorem  holds, we have that $0\le\sec\le 1$ for any $2$-plane. We can then compare the lengths of the sides of the geodesic triangle $\Delta(a,b,c)\subset G$ and the corresponding geodesic triangle in the unit sphere of dimension $d$, that is  
$$
\Delta'(a',b',c')\subset S=\{V:|V|=1\}\subset H
$$
where $\dim(H)=\dim (\g)+1=d+1$. Using the Cartan-Alexandrov-Toponogov Comparison Theorem \cite[Theorem 6.5.6]{bbi}, by fixing the lengths of two sides of the triangle in $G$ and the angle substended between them, it follows that  
$$
\di(a,b)\le \di_S(a',b'),
$$
as long as sum of the sides of the triangle is shorter than $2\pi$. On the left we have the Riemannian distance induced by the $\Ad$-invariant norm on $G$; on the right, the Riemannian distance on $S$ induced by ambient metric of $\mathbb R^{d+1}$. Using another comparision triangle (now fixing the lengths of all three sides but not the angle) and the previous inequality it follows that \textit{the sum of the inner angles of a geodesic triangle in $G$ with the Riemannian metric is less than $3\pi$}. On the other hand, since $\sec\geq 0$ for any $2$-plane we can use the same arguments but now comparing the space $G$ with a flat space of dimension $d$ obtaining that: \textit{the sum of the inner angles of a geodesic triangle in $G$ with the  Riemannian metric is greater or equal than $\pi$}.
\end{rem}

\section*{Acknowledgements} This research was supported by CONICET, ANPCyT and Universidad de Buenos Aires, grant UBACyT 20020220400256BA.

\end{document}